\RequirePackage{fix-cm} 
\documentclass[a4paper,twoside,12pt,reqno]{amsart}

\usepackage{fixltx2e}     

\usepackage[english]{babel}
\usepackage[latin1]{inputenc}

\usepackage{indentfirst,verbatim}

\usepackage{amsmath,amsfonts,amssymb,amsgen,amsbsy,eucal,mathrsfs,dsfont}
\usepackage{stmaryrd}

\usepackage[square,numbers]{natbib}
\usepackage{a4wide,verbatim}

\usepackage{epsfig,rotating,color}
\usepackage{multicol}
\usepackage{tikz,pgfplots}
\usepackage{graphicx}
\usepackage{mathtools}
\usepackage{textcomp}

\usepackage{caption}
\usepackage{nicefrac}

\usepackage{hyperref}

\usepackage[square,numbers]{natbib}
\usepackage{systeme}

\hypersetup{
colorlinks=true,  
breaklinks=true,  
urlcolor= blue, 
linkcolor= blue, 
bookmarksopen=true,  
pdftitle={},  
pdfauthor={}, 
pdfsubject={},  
citecolor=red,  
}

\newcommand*{\bydef}{\overset{\rm def}{=}}
\newcommand*{\norm}[1]{\left\Vert #1\right\Vert}

\newcommand*{\id}{\operatorname{Id}}

\newcommand*{\loc}{\mathrm{loc}}

\newcommand{\R}{\mathbb{R}}
\newcommand{\T}{\mathbb{T}}
\newcommand{\N}{\mathbb{N}}
\newcommand{\Z}{\mathbb{Z}}

\newcommand{\ud}{\mathrm{d}}

\usepackage{amsthm}
\newtheorem{theorem}{Theorem}[section]

\newtheorem{lemma}[theorem]{Lemma}
\newtheorem{proposition}[theorem]{Proposition}

\theoremstyle{remark}
\newtheorem{remark}{Remark}

\usepackage{enumitem}
\newlist{steps}{enumerate}{1}
\setlist[steps, 1]{label = Step \arabic*:}

\usepackage[textsize=tiny,textwidth=27mm]{todonotes}

\newcommand{\eqdef}{\stackrel{\text{\tiny{def}}}{=}}

\title[Euler--Poisson]{Singular traveling waves for the Euler--Poisson system}     

\author[]{Billel Guelmame, Taoufik Hmidi, Haroune Houamed and Fr\'ed\'eric Rousset}

\newcommand{\nfont}{\fontshape{n}\selectfont}

\address{({\nfont\textbf{Billel Guelmame}})  New York University Abu Dhabi, Abu Dhabi, United Arab Emirates.} 
\email{billel.guelmame@nyu.edu}

\address{({\nfont\textbf{Taoufik Hmidi}}) New York University Abu Dhabi, Abu Dhabi, United Arab Emirates.} 
\email{th2644@nyu.edu}

\address{({\nfont\textbf{Haroune Houamed}}) Ko\c c University, Istanbul, Turkey; and New York University Abu Dhabi, Abu Dhabi, United Arab Emirates.} 
\email{hhouamed@ku.edu.tr; haroune.houamed@nyu.edu}

\address{({\nfont\textbf{Fr\'ed\'eric Rousset}}) Universit\'e Paris-Saclay, Paris, France.} 
\email{frederic.rousset@math.u-psud.fr}

\let\oldtocsection=\tocsection
 
\let\oldtocsubsection=\tocsubsection

\renewcommand{\tocsection}[2]{\hspace{0em}\oldtocsection {#1}{#2}}
\renewcommand{\tocsubsection}[2]{\hspace{2em}\oldtocsubsection{#1}{#2}}

\numberwithin{equation}{section}

\usepackage{tabularx}
\usepackage[pagewise]{lineno} %

\begin{document}

\begin{abstract}
We consider the Euler--Poisson system for ions where the electrons are given by a Maxwell--Boltzmann distribution, and we investigate the existence of one-dimensional periodic traveling waves. More precisely, we first establish the existence of a smooth global  
branch of bifurcation emanating from a constant equilibrium. We then construct a singular traveling wave 
emerging as the limiting profile at the end of the global curve of bifurcation. 

Our analysis accommodates a wide class of pressure laws and provides a comprehensive characterization of both smooth and singular traveling waves. A central difficulty in this model arises from the exponential nonlinearity, induced by the nonlocal Poisson--Boltzmann equation, which prevents any explicit representation of the electron field in terms of the ion density. This poses significant obstacles compared to previous studies on related models, where such explicit formulas were crucial for global bifurcation arguments.  
\end{abstract}

\maketitle

\tableofcontents

\section{Introduction}


The study of nonlinear wave phenomena in dispersive media is a central topic in mathematical physics, with important applications ranging from fluid dynamics to plasma physics. Among the fundamental models arising in this context, the Euler--Poisson system occupies a prominent place as a reduced description of ion dynamics in a plasma, where the electrostatic interaction is mediated through a self-consistent field. 

In the present work, we investigate the existence and qualitative properties of periodic traveling wave solutions to a one-dimensional Euler--Poisson system with Boltzmann distributed electrons. More precisely, we consider a regime in which the electrons are assumed to have reached a thermodynamical equilibrium given by the Maxwell--Boltzmann distribution $ \rho_{e}= \mathrm{e}^\phi$, leading to a nonlinear Poisson equation of Maxwell--Boltzmann type. This setting provides a physically relevant and mathematically rich framework, in which nonlinear transport, pressure effects, and nonlocal interactions are strongly coupled. 
In more precise terms, the actual model of our interest here is given by 
\begin{equation}
	\label{EP}\tag{EP}
	\begin{cases}
		\partial_t \rho + \partial_x(u\rho) =0,
		\\
		\partial_t u + \tfrac{1}{2} \partial_x(u^2) + \partial_x \big (p(\rho)\big ) = - \partial_x \phi , 
		\qquad \qquad (t,x) \in \mathbb{R}^+\times \mathbb{T},
		\\
		-\partial_x^2  \phi + \mathrm{e}^{\phi} = \rho,
	\end{cases}
\end{equation}
which, again, describes the dynamics of ions in a plasma. Here, $\rho>0$ stands for the density of ions, $u$ is the velocity of ions and $\partial_{x} \phi$
 represents the electric field generated by the motion of the  particles. The pressure  $P(\rho)$, on the other hand, 
 is given  by 
 \begin{equation}
\label{Pp}
P'(\rho) \bydef  \rho p'(\rho), 	
\end{equation}
for some profile $p$, which will be discussed in a subsequent section.
 
This system of equations  can be justified as an asymptotic limit from the full  two-fluid model in the quasi-neutral and zero electron
mass limits, we refer for example to \cite{Alves-Tzavaras, Grenier-Guo-Pausader}. It  supports various classes of solitary wave solutions, as well as the two fluid models in some regimes.  We refer for example to \cite{Bae-Kwon, Degond-Cordier} for more details.
Here, we are interested  in  the one-dimensional case with periodic boundary conditions, where we denote  $\mathbb T \bydef \mathbb R/L \mathbb Z$  the one-dimensional torus with period $L>0$. We aim at investigating the existence and  qualitative properties  of smooth and singular periodic traveling  waves for \eqref{EP}. 

This type of solutions plays a fundamental role in understanding the nonlinear dynamics of dispersive systems, including \eqref{EP}. In particular, periodic traveling waves often arise as coherent structures and may serve as building blocks for more complex dynamics. Their analysis has attracted considerable attention in various contexts, including water waves, plasma models, and kinetic equations. A classical example is the Whitham equation \cite{Ehrnstrom3, Ehrnstrom2,  Ehrnstrom5}, which incorporates full dispersion effects and has been extensively studied in connection with the formation of highest waves and singular profiles. Similarly, the Vlasov--Poisson system provides a kinetic description of plasmas, where nonlocal effects also play a crucial role.

Despite earlier developments, the Euler--Poisson system considered here presents several distinctive features that make its analysis substantially more challenging. First, after reduction to a traveling wave ansatz, the problem leads to a nonlinear and nonlocal equation in which the nonlocality is governed by a \emph{nonlinear} operator. This contrasts sharply with many previously studied models, where the nonlocal term is typically linear. Second, the local nonlinear structure may become singular in regimes where the density approaches zero, introducing additional analytical difficulties. These aspects prevent a direct adaptation of existing techniques and require the development of new ideas.

\subsection{Reformulation of the problem}

Let us now shed   light on the reduction of the problem, which is done by applying the traveling wave ansatz and performing some useful reformulations. To this end, we first observe that \eqref{EP} is Galilean invariant, in the sense that if $(u,\rho,\phi)$ is a solution to \eqref{EP}, then for any $\omega \in \mathbb{R}$, the triplet
 \begin{equation*}
  \rho  (t,x-\omega t),  \quad 	u(t,x-\omega t) +\omega,     \quad \phi (t,x-\omega t)
 \end{equation*}
 is also a solution to \eqref{EP}, for any $\omega\in \mathbb R$. Given this, without loss of generality, we are going to look for stationary waves around the constant solution   
 \begin{equation*}
( \rho,  u,  \phi) = (1, c,0), \quad   c\in \mathbb R. 	
 \end{equation*} 
 Generally speaking, any stationary solution $(\rho,u,\phi)$ of \eqref{EP} is governed by the system 
 \begin{equation*}
	\begin{cases}
		u\rho =k_1,
		\\
		 \tfrac{1}{2} u^2 + p(\rho) = -  \phi + k_2 , 
		 
		\\
		-\partial_x^2  \phi + \mathrm{e}^{\phi} = \rho,
	\end{cases}
\end{equation*}
for some constants $k_1,k_2\in \mathbb R$. This can be recast as
\begin{equation*} 
	\begin{cases}
		\tfrac{1}{2} \left (\tfrac{k_1}{\rho} \right)^2 + p(\rho) = -\phi + k_2,		 
		\\
		-\partial_x^2  \phi + \mathrm{e}^{\phi} = \rho,
		\\
		u =k_1/\rho,
	\end{cases}
\end{equation*}
where, as we shall see later on, the first equation is the main one that we will be  focusing on. Recalling that we are looking for solutions around $ (1, c,0)$,   we   further adopt the condition 
\begin{equation*}
	k_1 =c , \quad k_2= p(1) + \tfrac{1}{2} c^2. 
\end{equation*}
Thus, we arrive at the final reformulation of our problem, which consists of  finding a solution $(c, \rho) $ of the problem 
\begin{equation}\label{main_EQ}
     \tfrac{c^2}{2} \left (\tfrac{1}{\rho^2} -1\right) + p(\rho)-p(1) + \mathcal H^{-1}(\rho) =0,
\end{equation}
where  $\mathcal H^{-1}$ denotes the inverse operator of 
\begin{equation}\label{Hdef}
	\mathcal H(\phi ) \bydef  - \partial_x^2  \phi + \mathrm{e}^{\phi }.
\end{equation}
The precise sense in which the operator $\mathcal H$ and its inverse are well defined will be discussed in a subsequent section, later on.

 \subsection{Choice of the pressure}\label{section:pressure}

Throughout the paper,  we will only be working with the profile $p$, representing the pressure through the equation \eqref{Pp}. We will assume that $p$ is locally  analytic on $(0,\infty)$. Nevertheless, it is to be emphasized that the analyticity  condition will only be required in the global bifurcation component of our analysis, while a $C^k$ regularity, for some $k\geqslant 2$, will be sufficient to construct (smooth) traveling waves around the equilibrium.
\\
We assume in addition, for any $\xi \in (0,\infty)$, that  
\begin{equation}\label{p_conditions1} 
p'(\xi)>0, \qquad  3 p'(\xi)+\xi p''(\xi) > 0, \qquad	\lim_{\xi \to 0^+ } \xi^3 p'(\xi) =0, \qquad \lim_{\xi \to \infty } W(\xi)=\infty,
\end{equation} 
where we set
\begin{equation*}
	W(\xi) \eqdef \frac{\xi^4 p'(\xi) - 2 \xi (p(\xi)-p(1)) - 2 \xi \log(\xi)}{ \max_{\eta \in [1 , \xi]} p'(\eta)+1 }, \qquad \text{for all }\xi \in [1,\infty) .
\end{equation*}
We defer  the discussion of these assumptions to the remark at the end of this section.
Note that the condition on the limit of $W(\xi)$ as $\xi \to \infty$ implies that 
\begin{equation*}
	\lim_{\xi \to \infty} \xi \left(\xi^3 p'(\xi)  - 2  (p(\xi)-p(1))   -2 \log(\xi) \right) =\infty,
\end{equation*}
and, therefore, that  
\begin{equation}\label{x3p' limit}
		\lim_{\xi \to \infty}  \xi^3 p'(\xi) =\infty.
\end{equation}
We deduce from \eqref{p_conditions1} and \eqref{x3p' limit} that the map 
\begin{equation*}
	\xi \mapsto \xi^3 p'(\xi)
\end{equation*}
is increasing and a bijection from $(0,\infty)$ into itself. 
\\
Let us now introduce some notation that will useful  later on. For any $c \in \R \setminus \{0\}$, we denote $a^*=a^*(c)>0$ the unique solution to the equation (in $\xi$)
\begin{equation}\label{a*def}
	\xi^3 p'(\xi)=c^2, \quad \text{  } \xi >0. 
\end{equation}
Note, moreover, that the map $c\mapsto a^*(c)$ is increasing on $(0,\infty)$ and 
\begin{equation}\label{a^*lim}
	\lim_{c \to \infty} a^*(c)=\infty.
\end{equation}
Furthermore, we introduce the  function 
\begin{equation}\label{G_c:def}
	\mathcal G_c(\xi) \bydef  \tfrac{c^2}{2} \left (\tfrac{1}{\xi^2} -1\right) + p(\xi)-p(1),
\end{equation} 
which represents the local component of the functional of our interest from \eqref{main_EQ}.
It is readily seen that this function  is analytic on $(0,\infty)$. Moreover, it holds that 
\begin{equation*}
	\mathcal G_c'(a^*(c))=0,
\end{equation*}
while
\begin{equation*}
	\mathcal G_c'(\xi)<0,  
\end{equation*} 
for any $\xi \in (0,a^*(c))$. In addition, the function $\mathcal G_c$ satisfies 
\begin{equation*}
	\mathcal G_c''(\xi)>0,  
\end{equation*} 
for any $\xi \in (0,a^*(c)]$. This convexity condition follows from the definition of $a^*(c)$ in \eqref{a*def} and the monotonicity of $\xi \mapsto \xi^3 p'(\xi)$ with \eqref{p_conditions1}.

\begin{remark}(About the assumptions on the pressure)\label{rem:pressure}
\begin{itemize}
	\item 
Let $\gamma>0$ and $\kappa>0$.
 In view of \eqref{Pp}, note that the example of pressures given by  
\begin{equation*}
	P_1(\rho) =   \kappa \rho^\gamma \qquad \text{and} \qquad  P_2(\rho) = \kappa \log(\rho)
\end{equation*}
which respectively correspond to 
\begin{equation*}
	p_1(\rho)= 
	\begin{cases}
		\tfrac{\gamma \kappa }{\gamma-1} \rho^{\gamma-1}, &\gamma \neq 1,\\
		\kappa \log(\rho), &\gamma =1
	\end{cases} 
	\qquad \text{and} \qquad  p_2(\rho) = \tfrac{-\kappa}{\rho}
\end{equation*}
are admissible and enjoy the condition  \eqref{p_conditions1}.

\item Note that if \eqref{p_conditions1} is satisfied, then 
\begin{equation*}
	\lim_{\xi \to \infty } W_\delta(\xi)=\infty,
\end{equation*}
for any $\delta \in (0,\infty)$, where 
\begin{equation}\label{Wdef}
		W_\delta(\xi) \eqdef \frac{\xi^4 p'(\xi) - 2 \xi (p(\xi)-p(1)) - 2 \xi \log(\xi)}{ \max_{\eta \in [\delta , \xi]} p'(\eta)+1 }, \qquad \xi \in [\delta,\infty).
\end{equation}
This will be used in Proposition \ref{prop:bound:c} below with $\delta \in (0,1)$. 
 
\item  To conclude this discussion, let us point out that the first three conditions in \eqref{p_conditions1} with \eqref{x3p' limit} are essentially considered to guarantee some important properties of the function $\mathcal G_c$ introduced above. Meanwhile, it is to be emphasized that the condition on the limit of $W(\xi)$ in \eqref{p_conditions1} will solely  be employed in the proof of Proposition \ref{prop:bound:c} to ensure  that any solution $ (c,\rho )$ of \eqref{main_EQ} is identically equal to the trivial solution $(c,1)$ if the value of $c$ is too large.\\ 

\end{itemize}
\end{remark}

 \subsection{Main results}

 We are now in a position to state our  contribution, which is outlined below in two separate  theorems for the sake of clarity. The first one establishes the existence of a global bifurcation curve of (smooth) solutions $ (c,\rho)$ to the equation \eqref{main_EQ}. This consists of a whole branch of traveling wave  solutions of the original Euler--Poisson system \eqref{EP}.

 \begin{theorem}[Smooth global branch of bifurcation]\label{thm:BifCurv}
Let $p$ be a locally  analytic function on $(0,\infty)$ that satisfies \eqref{p_conditions1}. Then, there exists a  set $\mathcal N \subset (0,\infty)$ of cardinal at most $8$, such that, for any $L \in (0,\infty) \setminus \mathcal N$,
	there exist  $\delta_0\in (0,1)$, a family $(c_s, \rho_s)_{s\in [0,\infty)} \subset \R \times C^\infty (\mathbb T)$ of solutions to \eqref{main_EQ} satisfying the following:
 	\begin{enumerate}[label=(\roman*)]
 	   \item Symmetries and monotonicity: $ \rho_s $ is $L$-periodic and even. Moreover, for any $s>0$, the function $\rho_s$ is increasing on $(-\nicefrac L2,0)$.
 	   \item Boundedness: For any $s \geqslant 0$, it holds that  
 	   \begin{equation*}
 	   	\delta_0<  \rho_s(x) <a^*(c_s), \quad \text{for all } x\in \mathbb T,
 	   \end{equation*}
 		  where $a^*(\cdot )$ is defined as the unique solution of \eqref{a*def} 
 		\item Bifurcation point: at $s=0$, we have that
 		  \begin{equation*}
 		  	(c_0, \rho_0)= \left (\pm \sqrt{p'(1) + \tfrac{1}{1+ (\nicefrac {2\pi}{L})^2}}, 1 \right).
 		  \end{equation*}
 		  \item Local analyticity: For any $\tilde{s} \in (0,\infty)$, there exists a continuous and injective parameterization $\varphi :(-1,1) \to \R$ such that $\varphi(0)=\tilde{s}$ and the map $t \mapsto (c_{\varphi(t)},\rho_{\varphi(t)})$ is analytic.
 		   	\end{enumerate}
 \end{theorem}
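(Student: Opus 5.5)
The plan is to apply the analytic global bifurcation theorem of Buffoni--Toland (Dancer's analytic version of Rabinowitz's theorem) to a nonlinear map $F(c,\rho)$ between spaces of even $L$-periodic functions. Since the nonlocal operator $\mathcal H^{-1}$ is nonlinear, I would first handle it on its own. For $\rho\in H^k_{\mathrm{even}}(\mathbb T)$ with $\rho>0$, I would show that $\mathcal H(\phi)=\rho$ has a unique solution $\phi\in H^{k+2}_{\mathrm{even}}$. Uniqueness follows from the monotonicity of $\mathrm{e}^\phi$ (test the difference of two solutions against itself). Existence follows by minimizing the convex functional $\int \tfrac12|\phi'|^2+\mathrm{e}^\phi-\rho\phi$. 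The map $\rho\mapsto\mathcal H^{-1}(\rho)$ is then real-analytic by the analytic implicit function theorem, because $D\mathcal H(\phi)=-\partial_x^2+\mathrm{e}^\phi$ is invertible.

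The maximum principle gives the pointwise bounds $\log\min\rho\le\phi\le\log\max\rho$. It also gives an ordering property: if $\rho$ is even and increasing on $(-L/2,0)$, then so is $\phi$, since $\phi'$ solves $-\psi''+\mathrm{e}^\phi\psi=\rho'$, which has a sign on the half-period with Dirichlet data.

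I would then set $F(c,\rho)=\mathcal G_c(\rho)+\mathcal H^{-1}(\rho)$ on the open set $\mathcal U=\{(c,\rho): 0<\rho<a^*(c)\}$. On $\mathcal U$ we have $\mathcal G_c'(\rho)<0$, so $F$ can be recast as the fixed-point problem
\[
\rho=\mathcal G_c^{-1}\bigl(-\mathcal H^{-1}(\rho)\bigr).
\]
Here $\mathcal H^{-1}$ gains two derivatives, so the problem has the form identity plus compact. This gives the Fredholm index zero and compactness properties needed for global bifurcation.

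\textbf{Local bifurcation.} Linearizing at $(c,1)$ gives
\[
D_\rho F(c,1)h=(p'(1)-c^2)h+(1-\partial_x^2)^{-1}h .
\]
On $\cos(2\pi n x/L)$ this acts by the symbol $p'(1)-c^2+\bigl(1+(2\pi n/L)^2\bigr)^{-1}$. Restricting to even functions, the kernel is one-dimensional exactly when $c^2$ takes the value given in (iii) with $n=1$, and no other $n$ produces the same value. The transversality condition $\partial_c D_\rho F\cdot\cos\ne0$ holds because it equals $-2c\cos\neq 0$. Crandall--Rabinowitz then yields the local curve.

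The finite exceptional set $\mathcal N$ of at most $8$ values of $L$ should come from the nondegeneracy conditions required by Buffoni--Toland. In particular, the curve must not be a locally degenerate one: one needs to compute the second- and third-order bifurcation coefficients (the coefficients of $s^2$ in $c_s$) and require them to be nonzero. These coefficients are rational expressions in $(2\pi/L)^2$ involving $p''(1),p'''(1)$, so their vanishing defines the exceptional $L$. Analyticity in Buffoni--Toland then gives (iv).

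\textbf{Global continuation.} The Buffoni--Toland alternative states that along the global curve either (a) $\|\rho_s\|+|c_s|\to\infty$, or (b) the curve approaches $\partial\mathcal U$, or (c) the curve is a closed loop. I would use the nodal property (i) to exclude (c), by the standard cone argument. The set of even $\rho$ with $\rho'<0$ on $(0,L/2)$ is open and closed along the curve: closedness uses the strong maximum principle applied to $\rho'$ via the differentiated equation
\[
\mathcal G_c'(\rho)\rho'=-\phi',
\]
combined with the monotonicity of $\phi$ described above. The curve leaves the trivial line in this cone and can never return to $(c,1)$ at another bifurcation point with mode $n\neq1$.

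For the bounds in (ii) and the control on $c$:
\begin{itemize}
\item Proposition~\ref{prop:bound:c}, via the condition $W\to\infty$, bounds $|c_s|$.
\item The upper bound $\rho<a^*(c)$ is built into $\mathcal U$.
\item A lower bound $\rho\ge\delta_0$ follows by evaluating the equation at the minimum of $\rho$, using $\phi\le\log\max\rho\le\log a^*(c)$ and the fact that $\mathcal G_c(\xi)\to\infty$ as $\xi\to0$ (from $c^2/\xi^2$ together with $\xi^3p'\to0$).
\end{itemize}
Elliptic regularity then bounds all $H^k$ norms, excluding (a). Thus the curve must approach $\max\rho_s\to a^*(c_s)$, which is alternative (b) and is consistent with the curve being parametrized by $s\in[0,\infty)$.

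The main obstacle is the nodal/monotonicity argument in the presence of the nonlinear nonlocal term, together with the explicit verification of the nondegeneracy of the bifurcation that produces the finite set $\mathcal N$. For monotonicity, I would first try the comparison principle for the linearized operator $-\partial_x^2+\mathrm{e}^\phi$ on the half-period, as sketched above.
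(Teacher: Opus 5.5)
Your overall route is the paper's: Crandall--Rabinowitz in even functions at $c_0$, then the analytic global theorem of Buffoni--Toland. Fredholm index zero comes from $\mathcal G_c'(\rho)<0$ on $\mathcal U$ plus a compact nonlocal part, and you need compactness of bounded solution sets. The cone version of that theorem rules out loops and gives (i), and $\mathcal N$ comes from the hypothesis $\Psi'\not\equiv 0$. Your maximum-principle argument for $\phi'$ on the half period (Dirichlet data by evenness, coefficient $\mathrm{e}^\phi>0$, Hopf at the endpoints) is a clean substitute for the device $h=\lambda\phi-\partial_x^2\phi$ in Lemma~\ref{lem:strict_monotonicity}.

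\textbf{The gap is in the cone step.} Your class (even, $\rho'<0$ on $(0,\nicefrac L2)$) carries no information forcing $\rho$ to cross the level $1$, and this is needed because \eqref{main_EQ} has other constant solutions in $\mathcal U$. Since $\mathcal H^{-1}(k)=\log k$, every root of $g_c(k)\bydef\mathcal G_c(k)+\log k$ is a solution. Here $g_c(1)=0$, $g_c'(1)=1+p'(1)-c^2$, and $g_c(0^+)=+\infty$ because $k^2p(k)\to 0$. So for $p'(1)<c^2<p'(1)+1$, in particular near $c_0$, there is a constant solution $k_1(c)\in(0,1)$ with $k_1<1<a^*(c)$. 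This family meets $\rho\equiv 1$ at $c^2=p'(1)+1$, where $\ker \ud_\rho F(c,1)$ is spanned by the constants.

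This breaks both readings of your argument. In the open/closed version, a limit of strictly monotone $\rho_{s_n}$ is, by your maximum principle, either strictly monotone or constant. You only dispose of the constant $1$, and there only of modes $n\neq 1$. Nothing in your argument prevents the branch from reaching $(c_*,k_1(c_*))$ and continuing in the decreasing direction, which would violate (i). If instead you use the closed cone of even functions non-decreasing on $(-\nicefrac L2,0)$, it contains the nonzero constants, and the kernel hypothesis (ii) of Theorem~\ref{thm:global:cone} fails at $c^2=p'(1)+1$.

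\textbf{The missing ingredient is the paper's cone \eqref{cone:def}.} Add the condition $\tilde f(-\nicefrac L2)\tilde f(0)\leqslant 0$, i.e.\ $\rho(-\nicefrac L2)\leqslant 1\leqslant\rho(0)$.
\begin{itemize}
\item This cone is closed and convex and excludes nonzero constants. Hence the only kernel direction inside it on the trivial line is $\tau\cos(2\pi\cdot/L)$ with $\tau>0$ at $c=c_0$.
\item Every $C^0$-limit of its elements takes the value $1$, so it can never be $k_1$.
\item Openness of the sign condition along the branch needs Lemma~\ref{lamma:sign}: a solution with $f(x_0)=1$ and $f\not\equiv 1$ has $\min f<1<\max f$, which survives $C^0$-small perturbations.
\item Openness of monotonicity needs two things (Lemma~\ref{lemma:stability:monotonicity}, Proposition~\ref{prop:openness:cone}). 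First, $C^0$-closeness of solutions must be upgraded to $C^2$-closeness, which works away from $a^*(c)$. Second, you need $f''(0)<0<f''(-\nicefrac L2)$. Your Hopf argument gives the latter, but you should say so.
\end{itemize}

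\textbf{Three further points.}
\begin{itemize}
\item For $\mathcal N$ you must actually show $\Psi'(0)=0$; this holds because $\ud^2_{ff}\mathcal F(c_0,1)(\xi_0,\xi_0)$ has only modes $0$ and $2$. You must also show that the numerator of $\Psi''(0)$, a polynomial of degree at most $8$ in $(2\pi/L)^2$, is not identically zero. Otherwise you get neither finiteness nor the bound $8$.
\item In your lower bound for (ii) the inequality is reversed. At the minimum point you need $\phi\geqslant\log\min\rho$, which gives $\mathcal G_c(m)\leqslant\log(1/m)$. You then also need $c$ bounded below, e.g.\ $a^*(c)\geqslant 1$, which again follows from $\rho_s$ taking the value $1$.
\item Elliptic regularity does not bound higher norms of $\rho_s$ uniformly. $\mathcal G_c^{-1}$ degenerates at $\mathcal G_c(a^*(c))$, and the endpoint profile has a corner. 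Work in $C_{\mathrm{even}}(\T)$, where compactness follows from the uniform $C^{1/2}$ bound. Excluding blow-up is not needed for this theorem anyway.
\end{itemize}
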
 

	  \begin{remark}\label{remark:about:N} It is to be emphasized later in our analysis (see Proposition \ref{prop:Psi} and its proof) that the condition $L \notin \mathcal{N}$, for some a priori ``potentially empty'' set $\mathcal{N}$, is used to ensure that the local bifurcation curve is not of the form $(c_{s},1)_{s \in (-\varepsilon,\varepsilon)}$, a property that appears to be required in the global bifurcation theorem. 

We conjecture that $\mathcal{N}$ is in fact empty. We actually can show this is the case of many pressure laws, including $p(\rho)=\rho ^2$ (see Remark \ref{remark:about:N:2}, below).  However, confirming  this claim in the general case would likely require lengthy and technical computations, which go  beyond the scope of the present paper. For this reason, we do not pursue this issue further here.  
\end{remark}

In our second theorem, below, we establish the existence of a singular traveling wave solution of \eqref{EP}, which  appears to be a limiting profile at the end of the global branch of solutions constructed in Theorem \ref{thm:BifCurv}.

\begin{theorem}[Singular traveling-wave]\label{thm:LimitObject}

Up to the extraction of a subsequence, the solution $(c_s,\rho_s)$ of \eqref{main_EQ} given by Theorem~\ref{thm:BifCurv} converges, as $s \to \infty$, to some limit $(c_\infty, \rho_\infty)$ in $\R \times C(\T)$.
Moreover, $(c_\infty, \rho_\infty)$ solves \eqref{main_EQ}, with $\rho_\infty \in W^{1,\infty}(\T)$ and  is of class $C^\infty$ on the set  $\mathbb R\setminus \{ kL: k\in \mathbb Z\}$. 
Furthermore, this limiting profile satisfies   
\begin{gather*}
 		  	\delta_0<  \rho_\infty(x) <a^*(c_\infty), \quad \text{for all } x\in \mathbb R\setminus \{ kL : k\in \mathbb Z\}, \\
 		  	  \rho_\infty(x) =a^*(c_\infty), \quad \text{for all } x\in \{ kL: k\in \mathbb Z\}.
\end{gather*}
 		  Additionally, $ \rho_\infty$  develops corner singularities at its maxima. More precisely,  one has the asymptotic behavior 
 		  \begin{equation*}
	a^*(c_\infty)-\rho_\infty(x)= \theta |x|+ \mathcal{O}(x^2),
		  \end{equation*}
for any $|x|\ll 1$, where 
\begin{equation*}
	\theta \bydef \sqrt{\tfrac{-\partial_x^2\mathcal{H}^{-1}(\rho_\infty) (0)}{\mathcal G_{c_\infty}''(a^*(c_\infty))}}  =  \sqrt{\tfrac{a^*(c_\infty)-\mathrm{e}^{-\mathcal G_{c_\infty}(a^*(c_\infty))}}{\mathcal G_{c_\infty}''(a^*(c_\infty))}} >0.
\end{equation*}
\end{theorem}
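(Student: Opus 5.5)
We will write the equation \eqref{main_EQ} as $\mathcal G_{c}(\rho)+\phi=0$ with $\phi=\mathcal H^{-1}(\rho)$. The plan is to first extract a convergent subsequence using uniform bounds, and then analyse the limit near its maximum via this local relation.

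\emph{Compactness.} By Theorem \ref{thm:BifCurv}(ii) and Proposition \ref{prop:bound:c}, $c_s$ is bounded, so along a subsequence $c_s\to c_\infty$. We also have $\delta_0<\rho_s<a^*(c_s)\leqslant C$. Elliptic estimates for $\mathcal H$ then bound $\phi_s$ uniformly in $W^{2,\infty}$, using the maximum principle: $\min\rho_s\leqslant \mathrm e^{\phi_s}\leqslant\max\rho_s$. Differentiating $\mathcal G_{c_s}(\rho_s)=-\phi_s$ gives $\mathcal G_{c_s}'(\rho_s)\rho_s'=-\phi_s'$. This does not yet give a uniform Lipschitz bound, because $\mathcal G'_{c_s}$ vanishes at $a^*(c_s)$.

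We use convexity instead. On $(0,a^*]$ we have $\mathcal G''>0$, so $\mathcal G_c$ is strictly decreasing with $\mathcal G_c(\xi)-\mathcal G_c(a^*)\simeq (a^*-\xi)^2$. Hence $\rho_s=\mathcal G_{c_s}^{-1}(-\phi_s)$, where the inverse is taken on the left branch. The map $y\mapsto \mathcal G_c^{-1}(y)$ behaves like $a^*-\sqrt{y-\mathcal G_c(a^*)}$, so it is uniformly $1/2$-Hölder. The $\phi_s$ are equicontinuous, so the $\rho_s$ are equicontinuous, and Arzel\`a--Ascoli gives $\rho_s\to\rho_\infty$ in $C(\T)$ and $\phi_s\to\phi_\infty$ in $C^1$. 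Continuity of $\mathcal H^{-1}$ lets us pass to the limit, so $(c_\infty,\rho_\infty)$ solves \eqref{main_EQ}. Evenness, monotonicity on $(-L/2,0)$ and the bounds $\delta_0\leqslant \rho_\infty\leqslant a^*(c_\infty)$ pass to the limit.

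\emph{Touching the critical level.} We must show $\rho_\infty(0)=a^*(c_\infty)$, and that this is the only such point mod $L$. This is the main obstacle. It should come from the global bifurcation alternative used to build the branch: since $c_s$ and the $C^\infty$ norms on compacts away from degeneracy stay bounded, the only way the branch can be unbounded in the relevant space is that $\max\rho_s-a^*(c_s)\to0$. I will argue by contradiction. If $\rho_\infty(0)<a^*(c_\infty)$, then $\mathcal G'_{c_s}(\rho_s)$ stays uniformly away from $0$. The implicit function theorem and bootstrapping then give uniform $C^k$ bounds along the subsequence, which contradicts the blow-up alternative (possibly combined with the lower bound $\delta_0$ to exclude loss of positivity). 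Strict monotonicity of $\rho_\infty$ on $(-L/2,0)$ should follow from the strong maximum principle applied to $\phi_\infty'$, which is odd and has a sign. This gives $\rho_\infty<a^*$ away from $kL$. There $\mathcal G'\neq0$, so smoothness on $\R\setminus L\Z$ follows by bootstrap: $\rho=\mathcal G^{-1}(-\phi)$ with $\phi=\mathcal H^{-1}(\rho)$, which gains two derivatives at each step.

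\emph{Corner asymptotics.} Near $x=0$, the function $\phi_\infty$ is $C^{1}$ with $W^{2,\infty}$ bounds and is even, so $\phi_\infty'(0)=0$. Then
\[
\mathcal G_{c_\infty}(\rho_\infty(x))-\mathcal G_{c_\infty}(a^*)=-\tfrac12\phi_\infty''(0)x^2+o(x^2).
\]
We have $\phi''=\mathrm e^{\phi}-\rho$, which is continuous, and $\phi_\infty(0)=-\mathcal G(a^*)$. Therefore
\[
-\phi''_\infty(0)=a^*-\mathrm e^{-\mathcal G_{c_\infty}(a^*)},
\]
which is positive by the maximum principle, since $0$ is the maximum of $\rho$. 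Taylor expanding the left side gives $\tfrac12\mathcal G''(a^*)(a^*-\rho)^2+O((a^*-\rho)^3)$, so $a^*-\rho_\infty=\theta|x|(1+o(1))$.

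To improve $o$ to $\mathcal O(x^2)$: the first step already shows $\rho_\infty$ is Lipschitz, hence $\phi''$ is Lipschitz and $\phi\in C^{3-}$ with $\phi'''$ bounded. The remainder is then $O(|x|^3)$, and taking the square root yields $\theta|x|+\mathcal O(x^2)$ and $\rho_\infty\in W^{1,\infty}$.
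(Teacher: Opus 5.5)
Your compactness, bootstrap and corner computations track the paper. There are two genuine gaps, however: one at the step you flag as the main obstacle, and a more serious one right after it.

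\textbf{The touching step.} The alternative available here is the trichotomy of Theorem \ref{thm:GB}, posed in $\R\times C(\T)$ on $U_{\delta_0}$:
\begin{itemize}
\item[(i)] blow-up in $\R\times C(\T)$;
\item[(ii)] approach to $\partial U_{\delta_0}$;
\item[(iii)] a closed loop through $(c_0,1)$.
\end{itemize}
Your compactness step already excludes blow-up. So the uniform $C^k$ bounds you derive when $\rho_\infty(0)<a^*(c_\infty)$ contradict nothing. What must be contradicted is alternative (ii), and for that the loop has to be excluded first, which you never address. In the loop case the whole branch lies in the compact set $\{(c_s,\rho_s): s\in[0,T]\}\subset U_{\delta_0}$. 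Then no subsequential limit touches $a^*$, and $(c_0,1)$ is one of the limits. The paper rules this out by global bifurcation in the cone $\mathcal K$ (Propositions \ref{prop:openness:cone} and \ref{prop:iii:fails}). Alternatively, taking Theorem \ref{thm:BifCurv}(i) as given, you can note that $\rho_T\equiv 1$ is not increasing. You then still need Proposition \ref{prop:delta:min}, namely $\rho_s\geqslant 2\delta_0$, for two things: to exclude the lower part of $\partial U_{\delta_0}$, and to get the strict bound $\delta_0<\rho_\infty$. Passing to the limit only gives $\delta_0\leqslant\rho_\infty$.

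\textbf{Non-constancy of the limit.} Nothing in your argument prevents the limit from being constant. The trivial state $(\sqrt{p'(1)},1)$ solves \eqref{main_EQ} and lies on $\partial U_{\delta_0}$, since $a^*(\sqrt{p'(1)})=1$. It ``touches the critical level'', has $\phi_\infty\equiv 0$, and gives $a^*-\mathrm{e}^{-\mathcal G(a^*)}=1-1=0$. Your strong maximum principle arguments (strict monotonicity via $\phi_\infty'$, and $-\phi_\infty''(0)>0$) only produce the dichotomy ``constant or strict''. They therefore need $\rho_\infty\not\equiv\mathrm{const}$ as input, and this is exactly what yields $\theta>0$ and $\rho_\infty<a^*(c_\infty)$ off $L\Z$. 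The paper obtains it from the uniform amplitude bound of Lemma \ref{lem:amplitude}, $a^*(c_s)-\rho_s(-L/2)\geqslant C(p,L)>0$ along the branch, which survives the limit. This is the missing idea.

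\textbf{Lipschitz regularity.} Your first step gives only $C^{1/2}$, not Lipschitz. Pointwise control $|\rho_\infty(t)-\rho_\infty(0)|\lesssim|t|$ from the $o(|x|)$ expansion does suffice for the $\mathcal O(x^2)$ refinement. However, $\rho_\infty\in W^{1,\infty}$ needs a bound on $\rho_\infty'=-\phi_\infty'/\mathcal G'(\rho_\infty)$ near $0$. That bound uses $|\mathcal G'(\rho_\infty(x))|\gtrsim a^*-\rho_\infty(x)\gtrsim|x|$, which again requires $\theta>0$. Alternatively, invoke Proposition \ref{pro:Lipregularity} as the paper does.
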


Notice that we can restrict our analysis below to the case of $c_0>0$, where $c_0$ is the bifurcation point introduced in Theorem \ref{thm:BifCurv}, for the case of a negative speed $c_0$ can be recovered by a symmetry argument. Indeed, this can be done by virtue of  the observation that replacing $u$ and $x$ in \eqref{EP} by $-u$ and $-x$, respectively, allows us to recover the case of negative speeds $c<0$  from the case of positive ones. 
We will thus   only be considering the case $c_0>0$ in the discussion and the  proofs of our main theorems.

\subsection{Challenges and  connections with relevant works}
In the proofs, we primarily rely on the formulation \eqref{main_EQ}, which takes the form of a nonlinear and nonlocal equation. It is worth noting that related results have been established for the Whitham equation \cite{Ehrnstrom3, Ehrnstrom2,  Ehrnstrom5}, a full-dispersion counterpart of the KdV equation, which can be derived from the water waves system with the same order of accuracy as the KdV equation (see, for instance, \cite{Lannes-book}). The singularity of the highest wave was conjectured by Whitham in connection with the Stokes conjecture for water waves, which was later resolved in \cite{Toland, Plotnikov}.
However, our formulation \eqref{main_EQ} does not fall within the class of equations considered in these works, which typically consist of a linear nonlocal operator combined with a quadratic nonlinearity of the form $u(u-\mu)$, for some $\mu>0$. In contrast, in our setting, the nonlocal component is governed by a nonlinear operator, while the local nonlinear term may become singular as $\rho \to 0$. This structural difference constitutes a major source of new analytical difficulties.
More precisely, one of the central challenges in proving the main results lies in the delicate analysis required to understand fine properties of the inverse operator $\mathcal{H}^{-1}$ associated with \eqref{Hdef}. Although several results concerning this operator are already available (see Section \ref{elliptic_problem} for a detailed comparison with the contributions of the present work), their application within the global bifurcation framework demands further refinement. Compared to related models---such as the Whitham equation \cite{Ehrnstrom4, Ehrnstrom1, Ehrnstrom3, Ehrnstrom2,  Ehrnstrom5} or the Vlasov--Poisson system \cite{R26}---the main additional difficulty stems from the nonlinear nature of the operator $\mathcal{H}$ and, consequently, of its inverse. This feature prevents a direct adaptation of existing arguments developed for those models to the present setting \eqref{EP}.

A key idea underlying our approach to overcome these difficulties is to introduce a suitable linearization scheme, which connects the nonlinear operator $\mathcal{H}^{-1}$ to an associated linearized operator that retains the essential structural properties. Once these properties are established at the linearized level, they can be transferred to the original operator $\mathcal{H}^{-1}$ through relatively elementary arguments. We refer to  Sections \ref{section:nonlin:OP} and \ref{elliptic_problem_lin}   for the detailed analysis of the nonlinear operator $\mathcal{H}^{-1}$ and its linearized counterpart, respectively. The way in which this connection is exploited in the context of traveling waves will become clear in the proofs presented in Section \ref{section:preleminary:traveling_waves}, thereafter.

\subsection{Organization of the paper}

Our analysis is divided into two parts. The first part is devoted to a priori estimates and preparatory results, which are developed in Sections \ref{section:preliminaries} and \ref{section:preleminary:traveling_waves}. The second part contains the core of the paper, where we establish the existence of small- and large-amplitude traveling wave solutions to \eqref{EP}, including both smooth and singular profiles; this is carried out in Sections \ref{section:local:theory} and \ref{sec:global} respectively.

Finally, for the sake of completeness, we collect in an appendix the main results from bifurcation theory that are used in the proofs of our principal theorems.

\section{Preliminaries}\label{section:preliminaries} 

This section is devoted to the proof of some elementary lemmas which will be useful in our analysis, later on. One of the core findings in this section will be related to the study the operator $\mathcal H$ defined in \eqref{Hdef} and its inverse. It is to be emphasized that a fine understanding of the inverse of the linear elliptic operator $\lambda \mathrm{Id}-\partial_x^2$ that will also be central in the analysis of the inverse of the nonlinear operator $\mathcal H$, and eventually the traveling waves of the Euler--Poisson equation \eqref{EP}.


\subsection{Some elementary lemmas}
 
For the sake of completeness, we embark with two elementary lemmas on real-valued functions enjoying some specific properties. These lemmas will be used in a subsequent section on the analysis of traveling wave solutions to Euler--Poisson system \eqref{EP}.

\begin{lemma}\label{lemma:MVT}
	Let $a<b$ be two real numbers, and let $h: [a,b] \to \mathbb R$ be a $C^2$ function such that $h'$ has a constant sign on $[a,b]$ and $h''$ does not vanish on $[a,b]$. Then, it holds that 
	\begin{equation*}
		|h(x)-h(y)|\geqslant M_1 |x-y|^2,
	\end{equation*}
	for any $x,y\in [a,b]$, where 
	\begin{equation*}
		M_1 \bydef \tfrac{1}{2} \min_{z\in [a,b]} |h''(z)|.
	\end{equation*}
	Moreover, if $c \in [a,b]$, such that 
	\begin{equation*}
		h'(c)=0,
	\end{equation*}
	then, it holds that 
	\begin{equation*}
		M_1 |x-c|^2 \leqslant |h(x)-h(c)| \leqslant M_2 |x-c|^2,
	\end{equation*}
	for any $x\in [a,b]$, where 
	\begin{equation*}
		M_2 \bydef \tfrac{1}{2} \max_{z\in [a,b]} |h''(z)|.
	\end{equation*}
\end{lemma}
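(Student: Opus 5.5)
The plan is to reduce to a single sign configuration by symmetry, and then integrate a lower bound on $|h'|$ coming from the second derivative. Since $h''$ is continuous and does not vanish on $[a,b]$, it has a constant sign there, and $\min_{[a,b]}|h''| = 2M_1 > 0$. Replacing $h$ by $-h$ changes neither the hypotheses, nor $M_1$, $M_2$, nor the quantities $|h(x)-h(y)|$. We may therefore assume $h''\geqslant 2M_1>0$ on $[a,b]$. By symmetry of the claim in $x,y$, we also take $a\leqslant y<x\leqslant b$.

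For the first inequality we split according to the sign of $h'$.

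\emph{Case $h'\geqslant 0$ on $[a,b]$.} For $t\in[y,x]$ the mean value theorem applied to $h'$ gives
\begin{equation*}
h'(t)\geqslant h'(y)+2M_1(t-y)\geqslant 2M_1(t-y).
\end{equation*}
Integrating over $[y,x]$ yields $h(x)-h(y)\geqslant M_1(x-y)^2$.

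\emph{Case $h'\leqslant 0$ on $[a,b]$.} We compare with the right endpoint instead:
\begin{equation*}
h'(t)\leqslant h'(x)-2M_1(x-t)\leqslant -2M_1(x-t).
\end{equation*}
Integrating over $[y,x]$ gives $h(y)-h(x)\geqslant M_1(x-y)^2$.

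In both cases $|h(x)-h(y)|\geqslant M_1|x-y|^2$. The only point requiring care is this choice of endpoint, made so that the sign of $h'$ can be discarded in the right direction. Alternatively, the case $h'\leqslant 0$ can be deduced from the first one by applying it to $t\mapsto h(a+b-t)$.

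For the second statement, with $h'(c)=0$, we use Taylor's formula with Lagrange remainder at $c$. For every $x\in[a,b]$ there is $\xi$ between $x$ and $c$ such that
\begin{equation*}
h(x)-h(c)=\tfrac12 h''(\xi)(x-c)^2 .
\end{equation*}
Taking absolute values and bounding $|h''(\xi)|$ between its minimum and maximum on $[a,b]$ gives
\begin{equation*}
M_1|x-c|^2\leqslant |h(x)-h(c)|\leqslant M_2|x-c|^2 .
\end{equation*}
Note that the hypothesis that $h'$ has a constant sign is not needed for this two-sided estimate; only the non-vanishing of $h''$ is used, and for the lower bound only.
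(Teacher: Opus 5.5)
Your proof is correct and follows essentially the paper's route. You reduce by sign symmetries to one configuration, expand at the endpoint where the first-order term has a favourable sign, and use Taylor's formula at $c$ for the two-sided estimate; integrating a lower bound on $|h'|$ rather than writing the Lagrange remainder is an equivalent variant, and your explicit choice of endpoint in the case $h'\leqslant 0$ is slightly more careful than the paper's remark that the case $x\geqslant y$ is ``treated in the exact same way''.
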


\begin{proof}
	Notice that we can assume, without loss of generality, that 
	\begin{equation}\label{case:***}
		h'(x) \leqslant 0 \qquad \text{ and } \qquad h''(x)> 0 , \quad \text{ for all } x\in [a,b].
	\end{equation}
	Indeed, the complete claimed result, in the more general case   stated in the lamma, can be achieved by replacing $h(x)$ with $ h( - x)$,  $-h(-x) $ or $-h(x)$, which allows us to cover all the possible scenarios corresponding to the different possible signs of $h'$ and $h''$.
\\	
Now, let $x,y\in [a,b]$ with $x<y$. It then follows by the Mean Value Theorem that 
	\begin{equation}\label{MVI}
		|h(x)-h(y)|= h(x)-h(y)= h'(y)(x-y) + \tfrac{1}{2} h''(z)|x-y|^2,
	\end{equation}
	for some $z\in [a,b]$. Thus, we deduce, by employing \eqref{case:***}, that 
	\begin{equation*}
		|h(x)-h(y)|\geqslant  \tfrac{1}{2} h''(z)|x-y|^2 \geqslant M_1 |x-y|^2 ,
	\end{equation*}
	where $M_1$ is defined in the statement of the lemma. We emphasize that the case $x\geqslant y$ can be treated in the exact same way, thereby completing the proof of the desired lower bound. 
	\\
	The upper bound in the case when $h'(c)=0$ is a direct consequence of  \eqref{MVI}.\\
	This completes the proof of the lemma.
\end{proof}

\begin{lemma}\label{lemma:stability:monotonicity}
	Let $f$ be a $C^2$ function  on a given closed interval $[a,b]$. Assume further that 
	\begin{equation*}
		f' > 0 \quad \text{ on } (a,b), \qquad \text{and }  \qquad f''(b)<0<f''(a).
	\end{equation*} 
	Then, any function $g$ in a sufficiently small $C^2$-neighborhood of $f$ satisfying
\begin{equation*}
	g'(a)\geqslant 0 \qquad \text{and} \qquad g'(b) \geqslant 0 ,
\end{equation*}	
is increasing on the interval $(a,b)$, as well.
\end{lemma}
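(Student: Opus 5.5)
The idea is to split $(a,b)$ into two boundary layers near $a$ and $b$, where the sign of the second derivative controls monotonicity, and a compact middle part, where $f'$ is bounded away from zero. Let $\|g-f\|_{C^2([a,b])}<\varepsilon$, with $\varepsilon$ to be fixed.

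\emph{Step 1 (boundary layers).} Since $f''$ is continuous and $f''(a)>0$, $f''(b)<0$, I choose $\eta>0$ with $a+\eta<b-\eta$ and $m\bydef\tfrac12\min\{f''(a),-f''(b)\}>0$ such that
\begin{equation*}
f''\geqslant 2m \ \text{on } [a,a+\eta], \qquad f''\leqslant -2m \ \text{on } [b-\eta,b].
\end{equation*}
If $\varepsilon<m$, then $g''\geqslant m>0$ on $[a,a+\eta]$ and $g''\leqslant -m<0$ on $[b-\eta,b]$. So $g'$ is strictly increasing on $[a,a+\eta]$. Together with $g'(a)\geqslant 0$, this gives $g'(x)>g'(a)\geqslant 0$ for every $x\in(a,a+\eta]$. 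Symmetrically, $g'$ is strictly decreasing on $[b-\eta,b]$ and $g'(b)\geqslant0$, so $g'(x)>g'(b)\geqslant 0$ for every $x\in[b-\eta,b)$.

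\emph{Step 2 (middle part).} On the compact interval $[a+\eta,b-\eta]\subset(a,b)$ we have $f'>0$, so $\mu\bydef\min_{[a+\eta,b-\eta]}f'>0$. If also $\varepsilon<\mu$, then $g'\geqslant \mu-\varepsilon>0$ there.

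\emph{Conclusion.} With $\varepsilon<\min\{m,\mu\}$, Steps 1 and 2 give $g'>0$ on all of $(a,b)$, so $g$ is (strictly) increasing on $(a,b)$.

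The only delicate point is the endpoints. There $f'(a)$ and $f'(b)$ may vanish, so a pure $C^1$-perturbation argument fails. This is why the hypotheses $g'(a),g'(b)\geqslant 0$ and the strict sign of $f''$ at the endpoints are used, through the monotonicity of $g'$ near each end. $C^2$-closeness is what transfers that sign of the second derivative from $f$ to $g$.
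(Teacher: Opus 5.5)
Your argument is essentially the paper's. It splits $(a,b)$ into boundary layers $[a,a+\eta]$ and $[b-\eta,b]$, where $g''$ inherits the strict sign of $f''$ and so $g'$ is pushed strictly positive starting from $g'(a)\geqslant 0$ and $g'(b)\geqslant 0$. On the compact middle interval, $f'$ is bounded below by a positive constant.

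There is one small slip in the constants. You set $2m=\min\{f''(a),-f''(b)\}$ and then claim $f''\geqslant 2m$ near $a$. When $2m=f''(a)$, this would require $f''\geqslant f''(a)$ to the right of $a$, which continuity does not give. Instead, require $f''\geqslant m$ on $[a,a+\eta]$ and $f''\leqslant -m$ on $[b-\eta,b]$, as the paper does with $\tfrac12 f''(a)$ and $\tfrac12 f''(b)$. Then $\varepsilon<m$ still gives $g''>0$ on the left layer and $g''<0$ on the right, and the rest of your proof goes through unchanged.
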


\begin{proof} 
Since $f\in C^2$, there exists $\varepsilon>0$ such that
\begin{equation*}
	f'' \geqslant \tfrac12 f''(a)>0 \quad \text{on } [a,a+\varepsilon]\qquad \text{and} \qquad
	f'' \leqslant \tfrac12 f''(b)<0 \quad \text{on } [b-\varepsilon,b].
\end{equation*}
Let $\mathcal{N}=B(f,r)$ be the closed ball in $C^2$ centered at $f$ with radius  
\begin{equation*}
r \bydef  \tfrac14 \min \left\{  f''(a), - f''(b), \min_{x\in[a+\varepsilon,b-\varepsilon]} f'(x) \right\}>0.
\end{equation*}
Now, let $g $ be fixed in $ \mathcal{N}$ such that $g'(a)\geqslant 0$ and $g'(b)\geqslant 0$. It is then readily seen that 
\begin{equation*}
	g''(x) \geqslant f''(x) - r > f''(x) - \tfrac12 f''(a) \geqslant 0,
\end{equation*}
for any  $x \in [a,a+\varepsilon]$.
Therefore $g'$ is increasing on $[a,a+\varepsilon]$.
Since $g'(a) \geqslant 0$, we have $g'>0$ on $(a,a+\varepsilon]$. 
Similarly, we can show that  $g'>0$ on $[b-\varepsilon,b)$.

On the other hand,  we have that
\begin{equation*}
	g'(x) \geqslant f'(x) - r > f'(x) - \tfrac12 \min_{x\in[a+\varepsilon,b-\varepsilon]} f'(x) > 0,
\end{equation*}
for any $x \in [a+\varepsilon,b-\varepsilon]$.
All in all, we have shown that $g'>0$ on $(a,b)$, thereby  completing the proof of the lemma. 
\end{proof}

\subsection{On a nonlinear elliptic operator}\label{section:nonlin:OP}

This section is exclusively devoted to the analysis of the elliptic equation 
\begin{equation}\label{elliptic_problem}
	\mathcal H(\phi)\bydef -\partial_x^2 \phi + \mathrm{e}^{\phi} =  f,
\end{equation}
which appears in \eqref{Hdef}, where $f$ is a given continuous function 
$$f : \mathbb{T} \mapsto (0,\infty). $$ 
The main result of this section, namely Proposition \ref{prop:elliptic} below, establishes the existence and uniqueness of solutions to \eqref{elliptic_problem}, together with their regularity properties in terms of the source term $f$. We note that similar results were  already  previously established, see   \cite[Section 2.1]{LLC13} and \cite[Section 3]{GM21CPDE} for instance.
For the sake of clarity, we now briefly outline the main differences between our approach and the analysis in \cite{GM21CPDE, LLC13}. The proof in \cite{LLC13} is carried out in the whole space $\mathbb{R}^d$, for $d=1,2,3$, and relies on the method of sub- and supersolutions to solve the equation $\mathcal{H}(\phi)=f$.  On the other hand,  the arguments  in \cite{GM21CPDE} are developed on the $d$-dimensional torus, for any $d\geqslant 1$ (see in particular Section 3.3 therein), and  are based on a variational approach. More precisely,  the solution $\phi $ in \cite{GM21CPDE} is constructed via the decomposition $\overline\phi+ \widehat \phi $, where $\overline\phi$ is the solution of the Poisson equation with the  source term $f-1$, while   $\widehat \phi$ solves a modified  version of the original elliptic problem, depending on $\overline \phi$ and involving the constant source term $1$. The existence of $\widehat{\phi}$ is then obtained by minimizing the energy functional associated with the Euler--Lagrange equation satisfied by $\widehat{\phi}$. \\
In contrast, we present a different approach on the torus, which can also be adapted to the whole space with minor modifications. Our construction of the solution $\phi$ proceeds in two steps. First, we establish existence and uniqueness for a suitable linearized problem, which is treated directly. Then, building on this linear analysis, we apply a fixed point argument (due to Schauder) to solve the original nonlinear problem \eqref{elliptic_problem}. \\
The next proposition formulates the main result of this section, stating the existence of a unique solution to \eqref{elliptic_problem} under some suitable conditions on $f$.

\begin{proposition}\label{prop:elliptic} For  any  given function $f$ in $ C (\mathbb{T})$ satisfying  
\begin{equation*}
	 \inf_{x\in \mathbb T} f (x)   \geqslant \delta ,
\end{equation*}
for some $\delta \in (0,1) $,
the equation \eqref{elliptic_problem} admits a unique solution $\phi \in C^2(\T)$.  Moreover, it holds that
\begin{equation*} 
\norm {\phi}_{C(\mathbb T)} \lesssim	\norm {\phi}_{H^1(\mathbb T)} \lesssim_\delta \norm {f-1}_{L^2(\mathbb T)},
\end{equation*} 
where the implicit constant does not depend on $f$. 
\\
In addition, we have the higher-order estimates
\begin{equation*}
	\norm {\phi}_{C^2(\mathbb{T})} \lesssim_{\delta, \|f\|_{C(\T)}} \norm {f-1}_{C (\mathbb{T})}
\end{equation*}
and
\begin{equation*} 
	\norm {\phi}_{H^2(\mathbb T)} \lesssim_{\delta, \|f\|_{C(\T)}} \norm {f-1}_{L^2(\mathbb T)}.
\end{equation*} 
Furthermore, we have that 
\begin{equation*}
	\log \left (\min_{y\in \mathbb T} f(y)\right)  \leqslant   \phi(x)\leqslant \log \left (\max_{y\in \mathbb T} f(y)\right),
\end{equation*}
for   any $x\in \mathbb T$.
\end{proposition}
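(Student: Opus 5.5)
Following the strategy announced before the statement, we first treat a linearized problem and then close the nonlinear problem \eqref{elliptic_problem} with a Schauder fixed point; the max-principle bounds supply the compactness needed along the way.

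\textbf{Existence.} Write $\mathrm{e}^{\phi}-1 = a(\phi)\,\phi$, where
\begin{equation*}
a(\phi)\bydef \int_0^1 \mathrm{e}^{t\phi}\,\ud t>0 .
\end{equation*}
Then $\mathcal H(\phi)=f$ is equivalent to
\begin{equation*}
-\partial_x^2\phi + a(\phi)\phi = f-1 .
\end{equation*}
Freeze the coefficient. Given $\psi$ in the convex set
\begin{equation*}
K\bydef \left\{\psi\in C(\T): \log \min f\leqslant \psi\leqslant \log\max f\right\},
\end{equation*}
let $T\psi$ be the solution of the linear problem
\begin{equation*}
-\partial_x^2\phi + a(\psi)\phi = f-1 .
\end{equation*}
Since $a(\psi)$ is continuous and bounded below by a positive constant depending on $\delta$, Lax--Milgram on $H^1(\T)$ gives existence and uniqueness. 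Testing the equation with $\phi$ yields
\begin{equation*}
\|\phi\|_{H^1}\lesssim_\delta \|f-1\|_{L^2},
\end{equation*}
and elliptic regularity upgrades this to a $C^2$ solution with an $H^2$ bound.

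\textbf{Invariance of $K$.} This is the delicate point. The frozen problem is not a max-principle statement for $\mathrm{e}^\phi$ itself, so the bounds do not follow by simply plugging in. Two options:
\begin{itemize}
\item Use the maximum principle on the linear equation. At a maximum point $x_0$ of $\phi=T\psi$ one has $a(\psi(x_0))\phi(x_0)\leqslant f(x_0)-1$, which gives a bound of the form $\phi\leqslant \max(f-1)/\min a$. This keeps $T(K)$ inside a possibly larger but still bounded box.
\item Work on a large box $K_M$ defined by $|\psi|\leqslant M$. Because $a$ is bounded below on $K_M$, one gets $\|T\psi\|_{C}\lesssim \|f-1\|_{C}/\min a$.
\end{itemize}
I will take the second route, choosing $M$ from the a priori bound. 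The cleanest way to avoid circularity is to cut off $a$: replace $a(\psi)$ by $a$ evaluated at $\max(\min(\psi,M_+),M_-)$, with $M_\pm$ equal to $\log\max f$ and $\log\min f$ respectively. Then $T$ maps all of $C(\T)$ into a bounded subset of $H^2\subset C^1$. This image is compact in $C(\T)$, and $T$ is continuous, as one checks by taking differences of the linear equations. Schauder's theorem then gives a fixed point $\phi$. Finally, show the cutoff is inactive: at a maximum point of the fixed point, $-\partial_x^2\phi\geqslant 0$, so $\mathrm{e}^{\phi}\leqslant f$ whenever the cutoff equation reduces to $\mathcal H$. One checks this case by case, since above $M_+$ the truncated exponential is still increasing, so the maximum principle argument forces $\phi\leqslant M_+$; the minimum case is symmetric. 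This yields the pointwise bounds $\log\min f\leqslant\phi\leqslant\log\max f$, and $\phi$ solves the true equation.

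\textbf{Uniqueness and estimates.} For two solutions, subtract the equations and test with $\phi_1-\phi_2$. Monotonicity of the exponential gives
\begin{equation*}
\|\partial_x(\phi_1-\phi_2)\|_{L^2}^2+\int(\mathrm{e}^{\phi_1}-\mathrm{e}^{\phi_2})(\phi_1-\phi_2)=0,
\end{equation*}
so $\phi_1=\phi_2$. For the estimates, test the equation $-\phi''+a(\phi)\phi=f-1$ with $\phi$, using $a(\phi)\geqslant \min(1,\delta)$ on the range of $\phi$, which is guaranteed by the pointwise bounds. This gives $\|\phi\|_{H^1}\lesssim_\delta\|f-1\|_{L^2}$, with $C\lesssim H^1$ by Sobolev embedding. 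Next,
\begin{equation*}
\phi''=\mathrm{e}^\phi-f=a(\phi)\phi-(f-1),
\end{equation*}
where $a(\phi)$ is bounded in terms of $\|f\|_{C}$. This gives the $H^2$ estimate from the $L^2$ bound. It also gives the $C^2$ estimate from $\|\phi\|_{C}\lesssim\|f-1\|_{L^2}\lesssim \|f-1\|_{C}$, and the $C^1$ norm follows by interpolation.

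The main obstacle is making the truncation/invariance step airtight, so that the fixed point indeed satisfies the pointwise logarithmic bounds and hence solves the untruncated equation.
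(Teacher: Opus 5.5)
Your proposal is correct and follows the paper's strategy: you freeze the coefficient $k(\psi)=(\mathrm{e}^{\psi}-1)/\psi$, solve the linear problem by Lax--Milgram, and close with Schauder. You then get uniqueness from monotonicity of the exponential, and the estimates by testing with $\phi$ and reading $\phi''$ off the equation.

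The only difference is how you make the fixed-point map a self-map. The paper uses no truncation: since $k$ is increasing, on the asymmetric box $S_{a,b}$ with $a=\log\delta$ and $b=\tfrac{\log(1/\delta)}{1-\delta}\|f-1\|_{C(\mathbb T)}$ the lower bound on the coefficient depends only on $a$, so the box is directly invariant. You instead clip the coefficient to $[\log\min f,\log\max f]$ and then remove the cutoff a posteriori with the maximum principle, using that the truncated nonlinearity is continuous and strictly increasing. This is equally valid and gives the sharp logarithmic bounds as part of the existence step.
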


\begin{proof}
The proof will be performed in  four steps. 
\\
{\em\underline{ Step $1 :$} Uniqueness.}
	The uniqueness part  of the proof  follows by a direct $L^2$-energy estimate. To this end, we invoke the Mean Value Theorem, which yields
	\begin{equation}\label{monotonicity:H}
		\int_{\mathbb T} \left( \mathrm{e}^{\phi_1} - \mathrm{e}^{\phi_2}\right)(\phi_1- \phi_2)\, \ud x \geqslant \mathrm{e}^{\min \{ \inf_x \phi_1(x),\inf_x \phi_2(x) \}}\int_{\mathbb T}  (\phi_1- \phi_2)^2\, \ud x \geqslant 0,
	\end{equation}
	for any continuous functions $\phi_1$ and $\phi_2$. Assuming now that \eqref{elliptic_problem} admits two solutions $\phi_1$ and $\phi_2$, we consider  the equation for their  difference, which takes the form	\begin{equation*}
		-\partial_x^2 \phi_1+\partial_x^2 \phi_2 + \mathrm{e}^{\phi_1} - \mathrm{e}^{\phi_2}=0. 
	\end{equation*}
	Multiplying by $\phi_1-\phi_2$ and integrating on the torus yields, by employing \eqref{monotonicity:H},  that $\phi_1=\phi_2$. 
	\\
{\em\underline{ Step $2 :$}  A companion linearized elliptic problem.}
Let us introduce the nonempty closed, convex and bounded set 
	\begin{equation*}
		S_{a,b} \bydef \{ \phi \in C(\mathbb {T}):  a \leqslant \phi (x)  \leqslant b  ,\quad \text{for all } x \in \mathbb T\},
	\end{equation*}
	for some scalars $a,b \in \mathbb {R}$, with $a<b$, which will be determined later on.
	\\
	 Given $\phi \in S_{a,b}$, we now consider the problem of constructing $\psi $ as the unique solution of the (linearized) elliptic problem 
	\begin{equation}\label{iteration_system:1}
			 -\partial_x^2  \psi + \tfrac{\mathrm{e}^{\phi  }-1}{\phi }\, \psi = f-1. 
	\end{equation}
	We claim that for any $a<b$ and $ \phi \in S_{a,b}$, there is a unique solution $\psi $ of \eqref{iteration_system:1} which enjoys the bound 
	\begin{equation*}
		\norm {\phi}_{H^2 (\mathbb T)} \lesssim_{a,b} \norm {f-1}_{L^2 (\mathbb T)}.
	\end{equation*}
	To achieve this, we first introduce the bilinear mapping 
	\begin{equation*}
	  	 \begin{aligned}
	  	 	\mathcal L_\phi  
	  	 	:   H^1(\mathbb{T}) \times H^1(\mathbb{T})  &\to \mathbb R
	  	 	\\
	  	 	 (u,v) &\mapsto \int_{\mathbb T} \left( \partial_x  u  \partial_x  v + \tfrac{\mathrm{e}^{\phi  }-1}{\phi }\, uv \right) \ud x.
	  	 \end{aligned} 
	  \end{equation*}
	It is then readily seen that $\mathcal L _\psi$ is well defined and coercive with
	\begin{equation*}
		\mathcal L_\phi (  u, u ) \geqslant C_{a} \norm {u}_{H^1}^2, \quad \text{for all } u\in H^1(\mathbb T),
	\end{equation*}
	where we set  
	$$C_a \bydef  \min \left\{1, \tfrac{\mathrm{e}^{a}-1}{a} \right\} .$$
	Therefore, Lax--Milgram Theorem ensures the existence of a unique solution $\psi$ of \eqref{iteration_system:1}, which belongs to $H^1(\mathbb T)$   and satisfies  the estimate
	\begin{equation}\label{psi bound}
		\norm {\psi}_{C (\mathbb T)} \lesssim \norm {\psi}_{H^1 (\mathbb T)} \leqslant \tfrac{1}{C_a} \norm {f-1}_{L^2 (\mathbb T)},
	\end{equation}
	where we have used in the first inequality  the one-dimensional embedding $H^1 \hookrightarrow C(\mathbb T)$.   Rearranging the terms in \eqref{iteration_system:1}, we obtain
	\begin{equation*}
		-\partial_x^2  \psi  = f-1 -   \tfrac{\mathrm{e}^{\phi  }-1}{\phi }\, \psi.
	\end{equation*}  
	We note  that the right-hand side belongs to $C (\mathbb T)$, and therefore $\partial_x^2 \psi $ belongs to $C(\mathbb T)$, as well, with the desired control in terms of $f$ and $\psi$. This completes the proof of the existence and uniqueness of the solution to \eqref{iteration_system:1}.  In particular, this allows  to define the  nonlinear operator 
	\begin{equation*}
	\begin{aligned}
	  	 	T &: S_{a,b} \to C^2 (\mathbb T)	  	 	\\
	  	 	& \quad  \phi  \mapsto T\phi \bydef \psi .
	  	 \end{aligned} 
	\end{equation*}
{\em\underline{ Step $3 :$} Schauder's fixed point.}
	Our next claim asserts that there are $a,b\in \mathbb R$ for which the operator  $T$ has a fixed point in $S_{a,b}$. 
	This will be achieved via Schauder's Fixed Point Theorem, which yields in turn  that any fixed point is a solution to the original elliptic problem \eqref{elliptic_problem}. To apply this theorem, we need to show that $T$ maps $S_{a,b}$ into itself and is continuous and compact. 
We recall that the remaining assumptions on the set $S_{a,b}$,namely, that it is closed, bounded, and convex, are clearly satisfied by construction.
	\\Let us assume, for the moment, that $T(S_{a,b}) \subset S_{a,b}$ and that  $T$ is continuous from  $S_{a,b}$ to $H^1(\T) \cap S_{a,b}$. Then, due to the compactness of the embedding  $ H^1(\mathbb T) \cap S_{a,b}\hookrightarrow S_{a,b}$, we deduce that $T$ is a compact operator. Therefore, Schauder's Fixed Point Theorem ensures the existence of a fixed point $T\phi = \phi \in C^2 (\mathbb {T})$, which eventually solves \eqref{elliptic_problem}. 
\\
It  remains to show the continuity of $T$ and to find $a$ and $b$ such that $T(S_{a,b}) \subset S_{a,b}$. 
Let $k$ be the smooth positive function defined over $\R$ by 
\begin{equation}\label{k def}
		k(r) \bydef 
	\begin{cases}
	\tfrac{\mathrm \mathrm{e}^ r -1}{r},  &	r \neq 0,\\
	1, & r=0.
	\end{cases}
\end{equation}
For a fixed $f$, assume that $(\phi_1,\psi_1),(\phi_2,\psi_2)$ solve \eqref{iteration_system:1}. Then, we deduce that  
	\begin{equation*}
			 -\partial_x^2  (\psi_1-\psi_2) + k(\phi_1) (\psi_1-\psi_2) +  (k(\phi_1) -k(\phi_2)) \psi_2 = 0. 
	\end{equation*}
Multiplying by 	$\psi_1-\psi_2$ and using \eqref{psi bound} for $\psi_2$ with the fact that $r \mapsto k(r)$ is increasing, we obtain that 
\begin{equation*}
	\|\psi_1-\psi_2\|_{H^1(\T)}^2 \lesssim_{a,f} \|\psi_1-\psi_2\|_{L^2(\T)} \|k(\phi_1) -k(\phi_2)\|_{L^2(\T)}.
\end{equation*}
Consequently, we deduce that
\begin{equation*}
	\|\psi_1-\psi_2\|_{H^1(\T)} \lesssim_{a,f}  \|k(\phi_1) -k(\phi_2)\|_{C(\T)}.
\end{equation*}
The continuity of $T$ follows  from the continuity of the map $r \mapsto k(r)$.
	\\
Let us now seek for suitable values of $a$ and $b$ such that
$$T(S_{a,b}) \subset S_{a,b}.$$
 To this end, we    introduce  
	\begin{equation*}
		c_*\bydef \min_{x\in \mathbb T} \psi (x) = \psi (x_*) \quad \text{and} \quad c^*\bydef \max_{x\in \mathbb T} \psi (x) = \psi (x^*) ,
	\end{equation*}
	for some $x_* $ and $ x^*\in \mathbb T$. 
	Evaluating \eqref{iteration_system:1} at $x_*$ yields 
	\begin{equation*}
		 \begin{aligned}
		 	 \tfrac{\mathrm{e}^{\phi (x_*) }-1}{\phi (x_*)}\, c _*
		 	&= f(x_*)-1 + \partial_x^2  \psi(x_*)
		 	\\
		 	& \geqslant \delta-1  .
		 \end{aligned}
	\end{equation*}
	Hence, using that $r\mapsto k(r)$ is positive and increasing, we obtain that 
	\begin{equation*}
		c_*\geqslant {\tfrac{(1-\delta) a}{1-\mathrm{e}^a}}.
	\end{equation*}
	Thus, choosing 
	$$a \bydef  \log (\delta)$$
	yields that 
	\begin{equation*}
	\psi(x)\geqslant 	c_*\geqslant a, \quad \text{for all } x\in \mathbb T.
	\end{equation*}
	On the other hand, evaluating \eqref{iteration_system:1} at $x^*$ implies that 
	\begin{equation*}
		 \begin{aligned}
		 	 \tfrac{\mathrm{e}^{\phi (x^*) }-1}{\phi (x^*)}\, c ^*
		 	&= f(x^*)-1 + \partial_x^2  \psi(x^*)
		 	\\
		 	& \leqslant  \norm {f-1}_{C (\mathbb T)}  .
		 \end{aligned}
	\end{equation*} 
	Therefore, we obtain that 
	\begin{equation*}
		\psi(x)\leqslant c^* \leqslant \tfrac{a}{\mathrm{e}^{a}-1}\norm {f-1}_{C(\mathbb T)} = \tfrac{\log(\frac{1}{\delta})}{1-\delta} \norm {f-1}_{C (\mathbb T)} \bydef b ,
	\end{equation*}
	for all $x\in \mathbb T$. This establishes that $T$ maps $S_{a,b} $ into itself with  the above choice of parameters $a$ and $b$. 
\\	
{\em\underline{ Step $4 :$}  Estimates.} Evaluating \eqref{elliptic_problem} at the maximum and the minimum of $\phi$, we obtain the pointwise estimate
\begin{equation*}
	\log \left (\min_{y\in \mathbb T} f(y)\right)  \leqslant   \phi(x)\leqslant \log \left (\max_{y\in \mathbb T} f(y)\right),
\end{equation*} 
for any $x \in \T$.
Let us now write \eqref{elliptic_problem} on the form 
\begin{equation*}
 -\partial_x^2 \phi + k(\phi) \phi =  f-1.
\end{equation*}
Multiplying by $\phi$ and using that $k(\phi) \geqslant k(\log(\delta))$, we obtain that  
\begin{equation*} 
\norm {\phi}_{C(\mathbb T)} \lesssim	\norm {\phi}_{H^1(\mathbb T)} \lesssim_\delta \norm {f-1}_{L^2(\mathbb T)}.
\end{equation*}
The $H^2(\T)$ and $C^2(\T)$ estimates follow from the equation, thereby completing the proof of the proposition. 
\end{proof}

\begin{remark}
	In the preceding proposition, if we assume in addition that  $f\in H^s(\mathbb T)$, then we can obtain an estimate for $\norm {\phi}_{H^{s+2}(\mathbb{T})}$ using  energy-type estimates. 
\end{remark}

\subsection{On a companion linearized elliptic operator}\label{elliptic_problem_lin}

  It will be apparent through our analysis later on  that the nonlinear nature of the operator $\mathcal H$ and its inverse is the main source of difficulty to establish certain important properties of solutions to \eqref{main_EQ} (see Lemmas \ref{lamma:sign}, \ref{lem:strict_monotonicity} and \ref{lem:amplitude} below).
  This is essentially because of the absence of an explicit formula for $\mathcal{H}^{-1}$.  However, we will see that a fine understanding of a specific  linearized operator will allow us to derive many key properties of $\mathcal H$ and it inverse. More precisely,  our first aim now is to analyze the inverse of linear operator 
$$\mathcal L_\lambda  \bydef \lambda \mathrm{Id} -\partial_x^2,$$
for a given constant $\lambda >0$. It is well known that, if $h \in C(\T)$, then the equation 

\begin{equation*}
	\mathcal L_\lambda\phi=h
\end{equation*}
has a unique solution $\phi$ given by 
\begin{equation*}
	\phi \bydef \mathcal L_\lambda^{-1} h \bydef G_{\lambda} \ast h,
\end{equation*}
where 
\begin{equation}\label{Gdef}
	G_{\lambda}(x) \bydef \frac{\cosh \left(\sqrt{\lambda} \left( x-L \lfloor \tfrac{x}{L} \rfloor - L/2 \right) \right) }{2 \sqrt{\lambda} \sinh \left(\sqrt{\lambda}   L/2\right)},
\end{equation}
where we recall that $L$ is the period of the torus $\mathbb T$, and $ \lfloor \cdot  \rfloor$ denotes the floor function, i.e.  $ \lfloor x  \rfloor = \max \{k \in \Z, k \leqslant x\}$. 
It is a classical fact  that the function $G_\lambda \in W^{1,\infty}(\T)$ is positive, $L$-periodic, even and increasing on $(-\nicefrac{L}{2},0)$.
Moreover, the function $\mathcal L_\lambda  ^{-1} h $ has the same parity as $ h $. 
Note that the operator $\mathcal L_\lambda  ^{-1}$ can be recast, for odd functions $\tilde h$, as  
\begin{equation*}
	(\mathcal L_\lambda  ^{-1} \tilde h)(x) = \int_{- \nicefrac{L}{2}}^0 \left( G_{\lambda}(x-y)- G_{\lambda}(x+y) \right) \tilde{h} (y)\, \ud y,
\end{equation*} 
for any $x\in \mathbb T$. Let us begin with a basic discussion on some properties of the kernel $G_\lambda$. 

\begin{lemma}\label{lem:G}
Let $\lambda \in (0,\infty)$ and $x \in (-\nicefrac {L}{2},0)$, then 
\begin{align*}
G_{\lambda}(x-y)- G_{\lambda}(x+y)>0, \quad \text{for any } y \in (-L/2,0), \\
G_{\lambda}(x-y)- G_{\lambda}(x+y)=0, \quad \text{for  } y \in \{-L/2,0\}.
\end{align*}
\end{lemma}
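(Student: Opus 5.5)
The plan is to reduce the claim to the monotonicity of $\cosh$ by making the distances to the nearest period point explicit. On $[-L,L]$ the kernel can be written as $G_\lambda(z)=g(d(z))$, where
\[
g(t)\bydef \frac{\cosh\left(\sqrt\lambda\,(t-\nicefrac L2)\right)}{2\sqrt\lambda\sinh(\sqrt\lambda L/2)}, \qquad d(z)\bydef \operatorname{dist}(z,L\Z)\in[0,\nicefrac L2].
\]
To check this, note that for $z\in[0,L)$ the formula \eqref{Gdef} gives $\cosh(\sqrt\lambda(z-L/2))$, and $\cosh$ is even, so only $|z-L/2|=L/2-d(z)$ matters. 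The function $g$ is strictly decreasing on $[0,\nicefrac L2]$, because $\cosh$ is strictly increasing in $|\cdot|$ and $|t-L/2|$ decreases as $t$ grows on that range. Hence the sign of $G_\lambda(x-y)-G_\lambda(x+y)$ is the sign of $d(x+y)-d(x-y)$. The whole task therefore becomes comparing distances to $L\Z$.

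Next I will compute both distances for fixed $x\in(-\nicefrac L2,0)$.

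\begin{itemize}
\item \emph{The point $x+y$.} For $y\in[-\nicefrac L2,0]$ we have $x+y\in(-L,0)$, so $d(x+y)=\min\{|x+y|,\,L-|x+y|\}=\min\{-x-y,\,L+x+y\}$.
\item \emph{The point $x-y$.} Here $x-y\in(-\nicefrac L2,\nicefrac L2)$, so $d(x-y)=|x-y|$.
\end{itemize}

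The key comparison is then
\[
d(x+y)-d(x-y)=\min\{-x-y,\,L+x+y\}-|x-y| .
\]
Both $-x-y$ and $L+x+y$ strictly exceed $|x-y|$ when $y\in(-\nicefrac L2,0)$. I will verify each term in both cases for the sign of $x-y$.

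\begin{itemize}
\item \emph{Case $x\geqslant y$.} Here $|x-y|=x-y$.
\begin{itemize}
\item $-x-y>x-y$ is equivalent to $x<0$, which holds.
\item $L+x+y>x-y$ is equivalent to $y>-\nicefrac L2$, which holds.
\end{itemize}
\item \emph{Case $x<y$.} Here $|x-y|=y-x$.
\begin{itemize}
\item $-x-y>y-x$ is equivalent to $y<0$, which holds.
\item $L+x+y>y-x$ is equivalent to $x>-\nicefrac L2$, which holds.
\end{itemize}
\end{itemize}

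So $d(x+y)>d(x-y)$, and since $g$ is strictly decreasing, $G_\lambda(x-y)>G_\lambda(x+y)$. This gives the strict inequality.

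For the boundary values the argument is direct.

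\begin{itemize}
\item \emph{$y=0$.} The two arguments coincide, so the difference is zero trivially.
\item \emph{$y=-\nicefrac L2$.} We have $x-y=x+\nicefrac L2$ and $x+y=x-\nicefrac L2$. These differ by $L$, so by $L$-periodicity of $G_\lambda$ the difference vanishes.
\end{itemize}

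The only delicate point is the bookkeeping with the floor function in \eqref{Gdef}, that is, correctly expressing $G_\lambda$ through $\operatorname{dist}(\cdot,L\Z)$ via evenness and periodicity. Once that reduction is in place, the rest is the elementary case analysis above.
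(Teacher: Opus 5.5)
Your proof is correct and follows essentially the same route as the paper. Both arguments reduce the claim, via evenness, $L$-periodicity and strict monotonicity of $G_\lambda$ on $[0,\nicefrac L2]$, to comparing $|x-y|$ with the distance from $x+y$ to $L\Z$. Your two entries of the minimum correspond exactly to the paper's cases $x+y\in[-\nicefrac L2,0)$ and $x+y\in(-L,-\nicefrac L2]$, and your distance-function bookkeeping (in place of the paper's symmetry reduction to $y<x$) also explicitly covers the diagonal $y=x$ and the endpoint values $y\in\{-\nicefrac L2,0\}$, which the paper leaves implicit.
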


\begin{proof}	
Due to the parity of $G_\lambda$, we can assume that $y<x$. 
We can consider  two cases. On the one hand, if $x+y \in  [-\nicefrac{L}{2},0)$, then, using that the kernel  $G_{\lambda}$ is even and decreasing on $(0, \nicefrac {L}{2})$, combined with the inequality $0<x-y<|x+y|<\nicefrac {L}{2}$, we  obtain  that  
$$G_{\lambda}(x-y)- G_{\lambda}(x+y)>0.$$
On the other hand, in the second case when $x+y \in  (-L,-\nicefrac {L}{2}]$, we use in addition the $L$-periodicity of the kernel $G_{\lambda}$ and the inequality $ 0<x-y < L+x+y< \nicefrac {L}{2}$ to end up with the same preceding inequality. This concludes the proof of the lemma.
\end{proof}

We show now that the operator $\mathcal L_\lambda^{-1}$ transforms even, non-trivial and non-decreasing functions on the half period to increasing functions on the same interval. We also present local convexity properties.

\begin{lemma}\label{parity_elliptic}
	Let $\lambda \in (0, \infty)$ be fixed, and  consider an $L$-periodic function $ h \in C(\mathbb T)$.  Moreover, if $ h  $ is non-constant, even and non-decreasing on $(-\nicefrac{L}{2} ,0)$, then it holds that 
\begin{equation*}
	\partial_x\mathcal L_\lambda  ^{-1} h  >0 ,
\end{equation*}
on $(-\nicefrac{L}{2},0)$, and   
\begin{equation*}
	\partial_x^2 \mathcal L_\lambda  ^{-1}h  (0) <0 \qquad \text{and} \qquad\partial_x^2 \mathcal L_\lambda  ^{-1}h  (-L/2) >0.
\end{equation*}
\end{lemma}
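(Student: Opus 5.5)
The plan is to differentiate under the convolution and move all the information onto the derivative of $h$, then use the odd-kernel representation together with Lemma \ref{lem:G}. Since $h$ is continuous, even, and non-decreasing on $(-\nicefrac L2,0)$, it has bounded variation. Its distributional derivative $\mu \bydef \partial_x h$ is therefore a finite signed measure on $\T$, which is odd. On $(-\nicefrac L2,0)$ it is a nonnegative measure, and it is nonzero because $h$ is non-constant. Since $\partial_x$ commutes with convolution,
\begin{equation*}
\partial_x \mathcal L_\lambda^{-1} h = G_\lambda \ast \mu = \mathcal L_\lambda^{-1}\mu .
\end{equation*}
If one prefers to avoid measures, first mollify $h$ by an even nonnegative mollifier. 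This preserves evenness and monotonicity on the half period, and one can pass to the limit at the end; the strict inequality survives because the limiting formula below is still positive. Applying the odd-function representation of $\mathcal L_\lambda^{-1}$ to $\mu$ gives, for $x\in(-\nicefrac L2,0)$,
\begin{equation*}
\partial_x\mathcal L_\lambda^{-1}h(x)=\int_{(-\nicefrac L2,0)}\bigl(G_\lambda(x-y)-G_\lambda(x+y)\bigr)\,\ud\mu(y).
\end{equation*}
The points $y=0$ and $y=-\nicefrac L2$ do not contribute, because there the kernel difference vanishes by Lemma \ref{lem:G}, so any atoms of $\mu$ at those points are harmless. By Lemma \ref{lem:G} the integrand is strictly positive on $(-\nicefrac L2,0)$. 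Since $\mu$ is nonnegative there and nontrivial, the integral is strictly positive.

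For the second derivatives, I would use the equation directly:
\begin{equation*}
\partial_x^2\mathcal L_\lambda^{-1}h=\lambda\mathcal L_\lambda^{-1}h-h .
\end{equation*}
Equivalently, I would differentiate the kernel formula once more, using that $G_\lambda' $ is odd with jump $-1$ at $0$. The cleaner route is to write $\partial_x^2\mathcal L_\lambda^{-1}h(0)=\int_{(-\nicefrac L2,0)} 2G_\lambda'(-y)\,\ud\mu(y)$, obtained by differentiating the displayed formula at $x=0$. Because $G_\lambda$ is even and decreasing on $(0,\nicefrac L2)$, we have $G_\lambda'(-y)<0$ for $y\in(-\nicefrac L2,0)$. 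The result is then strictly negative, since $\mu$ charges $(-\nicefrac L2,0)$; to see this, note that continuity of $h$ rules out the case where all of the mass of $\mu$ sits at the endpoints. At $x=-\nicefrac L2$ the same computation produces $G_\lambda'(-\nicefrac L2-y)-G_\lambda'(-\nicefrac L2+y)$. Using periodicity and the monotonicity of $G_\lambda'$, which follows from the explicit $\cosh$ form with $G_\lambda''=\lambda G_\lambda>0$ away from the jump, this quantity is positive, giving the strictly positive sign.

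The main obstacle is justifying differentiation at the endpoints $0$ and $-\nicefrac L2$, where $G_\lambda'$ jumps. I would handle it as follows. For $x$ near $0$, the function $\partial_x\mathcal L_\lambda^{-1}h$ is odd and positive on $(-\nicefrac L2,0)$, and hence vanishes at $0$. Then $\partial_x^2\mathcal L_\lambda^{-1}h(0)$, which exists since $\mathcal L_\lambda^{-1}h\in C^2$ for continuous $h$, equals the limit of $\partial_x\mathcal L_\lambda^{-1}h(x)/x$ as $x\to0^-$. This limit is computed by dominated convergence from the integral formula via difference quotients of $G_\lambda$. That these are controlled is clear, because $\partial_x G_\lambda$ is bounded and explicit; the only care needed is that the limit is strict, which again uses that $\mu$ charges the open half period. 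The endpoint $-\nicefrac L2$ is handled symmetrically.
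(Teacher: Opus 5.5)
Your proof is correct, but it integrates by parts in the opposite direction from the paper.

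\textbf{First derivative.} The paper never differentiates $h$. It writes $\partial_x\mathcal L_\lambda^{-1}h(x)=\int_{-L/2}^{0}G_\lambda'(y)\bigl(h(x-y)-h(x+y)\bigr)\,\ud y$ and uses $G_\lambda'>0$ on $(-\nicefrac L2,0)$. It then obtains $h(x-y)\geqslant h(x+y)$ by rerunning the distance comparison of Lemma~\ref{lem:G} on $h$ rather than on the kernel. You instead put the derivative on $h$ as the odd measure $\mu$ and apply Lemma~\ref{lem:G} to the kernel verbatim. This gives you a cleaner strictness step: the kernel difference is strictly positive for every $y\in(-\nicefrac L2,0)$, and $\mu$ is a nontrivial nonnegative measure there. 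The paper, by contrast, needs for each fixed $x$ some $y$ with $h(x-y)>h(x+y)$, which takes a short extra argument it only sketches. The price of your route is the BV/measure setup.

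\textbf{Second derivatives.} Here the trade-off reverses. The identity $\partial_x^2\mathcal L_\lambda^{-1}h=\lambda\mathcal L_\lambda^{-1}h-h$, which you mention and then set aside, finishes the job immediately once combined with $\lambda\int_{\T}G_\lambda=1$:
\begin{equation*}
\partial_x^2\mathcal L_\lambda^{-1}h(0)=\lambda\int_{-L/2}^{L/2}G_\lambda(y)\bigl(h(y)-h(0)\bigr)\,\ud y<0,
\end{equation*}
since $G_\lambda>0$, $h(0)=\max h$, and $h$ is continuous and non-constant. At $-\nicefrac L2$ one argues the same way using $h(-\nicefrac L2)=\min h$. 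This is the paper's proof, and it removes entirely the differentiation-at-the-kinks issue you identify as the main obstacle. Your dominated-convergence treatment of that issue is nonetheless sound: the difference quotients are bounded by $2\norm{G_\lambda'}_{L^\infty}$, they converge for every $y\in(-\nicefrac L2,0)$, and $\mu$ has no atoms.

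\textbf{Mollification remark.} One small correction to your optional aside: an even nonnegative mollifier need not preserve monotonicity on the half period. A bimodal one can produce a decreasing piece near $0$. You need the mollifier to be symmetric decreasing, so that $\rho_\varepsilon(x-y)\geqslant\rho_\varepsilon(x+y)$ for $x,y\in(-\nicefrac L2,0)$, exactly as in Lemma~\ref{lem:G}. Since your main argument works directly with $\mu$, this does not affect correctness.
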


\begin{proof}
Let  $x\in (-L/2,0)$, then straightforward computations, using in particular that $ G_{\lambda}'$ is odd, yield 
\begin{align*}
	\partial_x\mathcal L_\lambda  ^{-1} h(x)
	&= \int_{-L/2}^{L/2} G_{\lambda}'(y) h(x-y)\, \ud y\\
	&= \int_{-L/2}^{0} G_{\lambda}'(y) (h(x-y)-h(x+y))\, \ud y.
\end{align*}
Therefore, as $h$ is non-constant, following the proof of Lemma \ref{lem:G}, we can show that 
\begin{align*}
	h(x-y)-h(x+y) &\geqslant 0 \qquad \text{for any } y \in (-L/2,0), \\
	h(x-y)-h(x+y) &>0 \qquad \text{for some } y \in (-L/2,0).
\end{align*}
Thus, since $G_{\lambda}$ is increasing on $(-\nicefrac{L}{2},0)$, it follows that  
$$\forall x\in (-\nicefrac{L}{2},0),\quad \partial_x\mathcal L_\lambda  ^{-1} h (x) >0.
$$
As to the sign of $ \partial_x^2 \mathcal L_\lambda  ^{-1}h  (0)$, we use the fact that  
$$\int_{-L/2}^{L/2} G_{\lambda}(x)\, \ud x = \frac {1}{\lambda} $$
 to obtain that 
\begin{align*}
	\partial_x^2 \mathcal L_\lambda  ^{-1}h  (0) &=   \lambda \mathcal L_\lambda  ^{-1}h  (0) - h(0)\\
	&= \lambda  \left(\int_{-L/2}^{L/2} G_{\lambda}(y) h(y)\, \ud y - \int_{-L/2}^{L/2} h(0) G_{\lambda}(y)\, \ud y  \right) <0.
\end{align*}
Similarly, we compute that 
\begin{align*}
	\partial_x^2 \mathcal L_\lambda  ^{-1}h  (-L/2) &=   \lambda \mathcal L_\lambda  ^{-1}h  (-L/2) - h(-L/2)\\
	&= \lambda  \left(\int_{-L/2}^{L/2} G_{\lambda}(y) h(y+L/2)\, \ud y - \int_{-L/2}^{L/2} h(-L/2) G_{\lambda}(y)\, \ud y  \right) >0,
\end{align*}
thereby establishing the claimed sign of $\partial_x^2 \mathcal L_\lambda  ^{-1}h  (-L/2) $. This  concludes the proof.
\end{proof}

\section{Properties of traveling waves}\label{section:preleminary:traveling_waves}

In this section, we develop a self-contained analysis of several fundamental properties of any a priori solution 
  to the nonlinear equation \eqref{main_EQ}. At this stage, the existence of solutions to \eqref{main_EQ} has not yet been established. Nevertheless, the results derived in this section will play a crucial role in the subsequent construction of a ``singular endpoint'' solution to \eqref{main_EQ}. 
  
  We recall that $p$  satisfies  the conditions stated in Section~\ref{section:pressure}, and that the function $\mathcal G_c$ introduced in \eqref{G_c:def} is decreasing on $(0,a^*(c))$ where $a^*(c)$ denotes its critical point. 

Finally, for the mere sake of convenience, and to avoid any possible confusion in notations in the proofs below, we point out that we choose to consider the change the notation $(c,\rho)\to (c,f)$ from now on when we refer to the solution of \eqref{main_EQ}.

\subsection{Preliminary analysis of the traveling waves}

We begin with a preliminary discussion of the properties satisfied by any solution   to the equation \eqref{main_EQ}. For  clarity, the results are organized into a series of propositions and lemmas, which will be instrumental in the subsequent analysis.

\begin{proposition}[Smoothness away from the critical point]\label{prop:smoothness:1}
Let $ (c,f) $ be a solution of the equation \eqref{main_EQ}, with  $f$ being a continuous function satisfying the bounds 
\begin{equation*}
	0<\delta \leqslant f(x) \leqslant a^*(c), \quad \text{for any } x \in \T.
\end{equation*}	
Then, $f$ is $C^\infty$-smooth   in the open set 
\begin{equation*}
I \bydef f^{-1}\left (\delta, a^*(c) \right) = \{x \in \T, \delta <f(x) <a^*(c) \}.
\end{equation*} 
\end{proposition}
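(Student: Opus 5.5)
Set $\phi\bydef\mathcal H^{-1}(f)$, so that \eqref{main_EQ} reads $\mathcal G_c(f)=-\phi$ pointwise on $\T$. The plan is a bootstrap: regularity of $f$ improves the regularity of $\phi$ through the elliptic equation, and regularity of $\phi$ improves the regularity of $f$ on $I$ by inverting $\mathcal G_c$ locally.

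\textbf{Initial regularity of $\phi$.} Since $f\in C(\T)$ and $f\geqslant\delta>0$, Proposition~\ref{prop:elliptic} gives a unique $\phi\in C^2(\T)$. (If $\delta\geqslant1$ we simply apply it with some $\delta'\in(0,1)$, $\delta'<\delta$.) Hence
\begin{equation*}
\mathcal G_c(f(x))=-\phi(x),\qquad x\in\T,
\end{equation*}
with a right-hand side in $C^2(\T)$.

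\textbf{Local inversion on $I$.} Fix $x_0\in I$. Then $f(x_0)\in(\delta,a^*(c))\subset(0,a^*(c))$, where $\mathcal G_c'<0$. Moreover $\mathcal G_c$ is analytic, or at least $C^\infty$ if $p$ is only assumed smooth. On the open interval $(0,a^*(c))$ the map $\mathcal G_c$ is therefore a $C^\infty$ diffeomorphism onto its image, with $C^\infty$ inverse $\mathcal G_c^{-1}$. By continuity of $f$, there is a neighborhood $U$ of $x_0$ on which $f(U)\subset(\delta,a^*(c))$, so $U\subset I$ and $I$ is indeed open. On $U$ we can write
\begin{equation*}
f=\mathcal G_c^{-1}(-\phi).
\end{equation*}
It follows that $f\in C^2(U)$, hence $f\in C^2(I)$.

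\textbf{Bootstrap.} Suppose $f\in C^k(U)$ for some $k\geqslant2$. From $\partial_x^2\phi=\mathrm e^\phi-f$ we get $\partial_x^2\phi\in C^{\min(k,2)}(U)$, since $\phi$ is already $C^2$. Iterating, if $\phi\in C^{m}(U)$ and $f\in C^k(U)$, then $\phi\in C^{\min(m,k)+2}(U)$. The inversion then yields $f\in C^{\min(m,k)+2}(U)$. Each round thus gains two derivatives for both $f$ and $\phi$. Induction gives $f,\phi\in C^\infty(U)$, and since $x_0\in I$ was arbitrary, $f\in C^\infty(I)$. Note that this step is purely local: it needs only the local interior regularity of the ODE $-\phi''=f-\mathrm e^{\phi}$ on $U$, not global regularity of $f$.

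\textbf{Main obstacle.} The delicate point is only to avoid the endpoints. The inversion fails where $f=a^*(c)$, because $\mathcal G_c'$ vanishes there. This is exactly why the statement restricts to the open set $I$, where the inverse function theorem applies with uniform nondegeneracy on compact subsets.
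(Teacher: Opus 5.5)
Your proposal is correct and follows essentially the same route as the paper. You get $\phi=\mathcal H^{-1}(f)\in C^2(\T)$ from Proposition~\ref{prop:elliptic}, write $f=\mathcal G_c^{-1}(-\phi)$ on $I$ using $\mathcal G_c'<0$ below $a^*(c)$, and bootstrap through $\partial_x^2\phi=\mathrm e^{\phi}-f$, while your pointwise-local formulation and the remark about applying Proposition~\ref{prop:elliptic} with some $\delta'<\min\{\delta,1\}$ are only harmless refinements of the paper's argument.
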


\begin{proof}
We recall that $\phi=\mathcal{H}^{-1}(f)$ satisfies the equation 
\begin{equation*}
	\mathrm{e}^\phi-\partial_x^2 \phi =f.
\end{equation*}
Since $f\in C(\T)$, then Proposition \ref{prop:elliptic} implies that $\phi \in C^2(\T)$.
Let $\mathcal G_c $ be the function defined in \eqref{G_c:def}, which is locally analytic on $(0,\infty)$. Note that the map 
\begin{equation*}
	\mathcal G_c : \left (\delta, a^*(c) \right) \to (\mathcal G_c(a^*(c)), \mathcal G_c(\delta))
\end{equation*}
 is invertible and its inverse is of class $C^\infty$ (it is in fact locally analytic as well). Therefore,  one can recast \eqref{main_EQ} as
	\begin{equation}\label{f=G^-1}
		f= \mathcal G_c^{-1}\left( - \phi\right).
	\end{equation}
On the domain $I$, $f$ takes its values in the interval $(\delta,a^*(c))$ and therefore $-\phi$, which coincides with $\mathcal{G}_c(f)$, takes values in $(\mathcal G_c(a^*(c)), \mathcal G_c(\delta))$. Thus, equation \eqref{f=G^-1} gives that $f$ is of class $C^2$ on $I$, which  implies in turn  that $\phi$ is of class $C^4$ on $I$ as well. The desired result follows using a bootstrap argument.  
\end{proof}

In the next result, we establish that, for a large value of $c$, the equation \eqref{main_EQ} does not admit non-trivial solutions.  This will come in handy later in a subsequent section.

\begin{proposition}[Return to equilibrium for large speeds]\label{prop:bound:c}
Let  $\delta \in (0,1)$ be fixed. There is a constant $K_{\delta}>0$, depending only on $\delta$, such that,
	if $(c,f)$ solves \eqref{main_EQ}, where $f$ is continuous such that 
	\begin{equation*}
		 \delta \leqslant f(x) \leqslant a^*(c), \quad \text{for all } x\in \mathbb T,
	\end{equation*}
	 and 
	\begin{equation*}
		|c|> K_{\delta},
	\end{equation*}
	then  $	f\equiv 1. $
\end{proposition}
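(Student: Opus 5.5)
The plan is to work with $\phi=\mathcal H^{-1}(f)$, so that \eqref{main_EQ} reads $\mathcal G_c(f)=-\phi$. I would split according to the size of $\xi\bydef\max_{\T}f\in[\delta,a^*(c)]$. In the regime $\xi\le \xi_0$, for a threshold $\xi_0$ depending only on $\delta$ (and on $p$), I run a contraction argument based on the steepness of $\mathcal G_c$ for large $c$. In the regime $\xi>\xi_0$, I show that $|c|$ must be bounded, using the growth condition on $W_\delta$ from Remark \ref{rem:pressure}. Throughout, Proposition \ref{prop:elliptic} supplies the two facts $\log\delta\le\phi\le\log\xi$ and $\|\phi\|_{C(\T)}\lesssim_\delta\|f-1\|_{L^2(\T)}$.

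\emph{Large amplitude.} Let $x_0$ be a maximum point of $f$, so $f(x_0)=\xi$. Then $-\mathcal G_c(\xi)=\phi(x_0)\le\log\xi$, that is,
\begin{equation*}
\tfrac{c^2}{2}\big(1-\tfrac1{\xi^2}\big)\le p(\xi)-p(1)+\log\xi .
\end{equation*}
Since $\xi\le a^*(c)$ and $\eta\mapsto\eta^3p'(\eta)$ is increasing, we also have $c^2\ge\xi^3p'(\xi)$. Inserting this lower bound for $c^2$ gives, after multiplying by $2\xi$,
\begin{equation*}
\xi^4p'(\xi)-2\xi(p(\xi)-p(1))-2\xi\log\xi\le \xi^2p'(\xi)\le \xi^2\big(\max_{[\delta,\xi]}p'+1\big).
\end{equation*}
This is the step I expect to be the main obstacle. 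The crude inequality above carries an extra factor $\xi^2$ compared with the quotient $W_\delta(\xi)$, so by itself it does not force $\xi$ to be bounded. To close it I would use both inequalities together: the first one, with $1-\xi^{-2}\ge\tfrac34$ for $\xi\ge2$, bounds $c^2$ by $\tfrac83\,(p(\xi)-p(1)+\log\xi)$. I would then rearrange the resulting estimate so that its left-hand side is exactly the numerator of $W_\delta$ and its right-hand side is controlled by the denominator $\max_{[\delta,\xi]}p'+1$. The condition $W_\delta\to\infty$ then yields a bound $\xi\le\xi_0(\delta)$, and feeding this back into the bound on $c^2$ bounds $|c|$. Consequently, for $|c|$ large, only $\xi\le\xi_0$ is possible.

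\emph{Small amplitude.} Now assume $f$ takes values in $[\delta,\xi_0]$. On this interval,
\begin{equation*}
|\mathcal G_c'(\eta)|=\tfrac{c^2}{\eta^3}-p'(\eta)\ge \tfrac{c^2}{\xi_0^3}-\max_{[\delta,\xi_0]}p' \bydef m_c ,
\end{equation*}
and $m_c\to\infty$ as $|c|\to\infty$. Because $\mathcal G_c(1)=0$, the Mean Value Theorem gives $m_c|f-1|\le|\mathcal G_c(f)|=|\phi|$ pointwise, hence
\begin{equation*}
\|f-1\|_{L^2}\le m_c^{-1}\|\phi\|_{L^2}\lesssim m_c^{-1}\|\phi\|_{C(\T)}\lesssim_\delta m_c^{-1}\|f-1\|_{L^2}.
\end{equation*}
Choosing $K_\delta$ so large that the implicit constant times $m_c^{-1}$ is smaller than $1$ whenever $|c|>K_\delta$ forces $f\equiv1$, which is the claim of Proposition \ref{prop:bound:c}.
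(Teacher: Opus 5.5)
Your proof is correct, but it is organized differently from the paper's.

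\textbf{The paper's route.} The paper never bounds $\max f$ first. It tests \eqref{main_EQ} against $f-1$, using the weight $\frac{f+1}{f^2}\geqslant \frac1M$, to get $\frac{c^2}{J(M)}\|f-1\|_{L^2}^2\leqslant B_\delta\|f-1\|_{L^2}^2$ with $J(M)=M\big(\max_{[\delta,M]}p'+1\big)$. It then evaluates at the maximum to get $\frac{c^2}{J(M)}\geqslant W_\delta(M)$. It concludes by a contradiction argument built on a monotone minorant $\tilde W$ and $J^{-1}$.

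\textbf{Your route.} You use the maximum-point equation in both directions to get an a priori bound on $\max f$ first. Only then do you run a pointwise mean-value contraction on a fixed interval. The step you leave vague does close. For $\xi\geqslant 2$, writing $Q(\xi)=\xi^4p'(\xi)-2\xi(p(\xi)-p(1))-2\xi\log\xi$ for the numerator of $W$, one has
\begin{equation*}
Q(\xi)\leqslant \xi^2p'(\xi)\leqslant \frac{c^2}{\xi}\leqslant \frac{8}{3\xi}\big(p(\xi)-p(1)+\log\xi\big)\leqslant \frac83\Big(\max_{[1,\xi]}p'+1\Big).
\end{equation*}
This uses $c^2\geqslant \xi^3p'(\xi)$, your upper bound on $c^2$, $p(\xi)-p(1)\leqslant (\xi-1)\max_{[1,\xi]}p'$, and $\log\xi\leqslant \xi$. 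Hence $W(\xi)\leqslant \frac83$, and $W\to\infty$ gives $\max f\leqslant \xi_0$ for every solution, with $\xi_0$ depending only on $p$. So your regime $\xi>\xi_0$ is actually empty, and the bound on $|c|$ there is not needed. The small-amplitude contraction then applies as you wrote it.

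\textbf{What each route buys.} Your route is more transparent and avoids $\tilde W$ and $J$. It also yields a by-product: a bound on $\max f$ independent of $c$, and even of $\delta$. The paper only obtains such an $L^\infty$ bound afterwards, in Proposition~\ref{prop:uniform_bound and Holder regularity}, as a consequence of this very proposition. The paper's route instead keeps the dependence on the range of $f$ explicit through $J(M)$ inside a single integrated inequality. This avoids any case distinction on the size of $\max f$, at the cost of the somewhat roundabout monotone-minorant argument.
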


\begin{proof}
We denote the maximum of $f$ by $M$, i.e., 
\begin{equation*}
	M \bydef \max_{x\in \T} f(x)\leqslant a^*(c).
\end{equation*}
	Taking the $L^2$-inner product of \eqref{main_EQ} with $f-1$ yields that 
	\begin{equation*}
		\begin{aligned}
			\frac{c^2}{2} \int_{\mathbb T} \frac{f(x)+1}{f^2(x)}  |f(x)-1|^2\, \ud x = \int_{\mathbb T} (p(f) (x)-p(1)) (f(x)-1)\, \ud x + \int_{\mathbb T} \mathcal H^{-1}(f)(x) (f (x)-1)\, \ud x.
		\end{aligned}
	\end{equation*}
	Combining   the elementary bounds
		\begin{equation*}
		\begin{aligned}
			|p(f) (x)-p(1) |
			&\leqslant \max_{\xi \in [\delta , M]} p'(\xi)  |f (x)-1| 			
		\end{aligned}
	\end{equation*}
	and
	\begin{equation*}
		\frac{c^2}{2} \int_{\mathbb T} \frac{f(x)+1}{f^2(x)}  |f(x)-1|^2\, \ud  x \geqslant \frac{c^2}{2 M}    \norm {f-1}_{L^2(\mathbb T)}^2,
	\end{equation*}
	and employing   the control of $\mathcal H^{-1} (f)$ from Proposition \ref{prop:elliptic} 
	\begin{equation*}
		\norm {\mathcal H^{-1}(f)}_{L^2(\mathbb T)} \lesssim_\delta \norm {f-1}_{L^2(\mathbb T)},
	\end{equation*}
	 we find that 
	\begin{equation*}
		 \begin{aligned}
		 	\frac{c^2}{M} \norm {f-1}_{L^2(\mathbb T)}^2 
		 	&\lesssim_\delta  \left( \max_{\xi \in [\delta , M]} p'(\xi)+1 \right)   \norm {f-1}_{L^2(\mathbb T)}^2.
		 \end{aligned}
	\end{equation*}
Now, we define  the increasing (and bijective) map $J : (0,\infty) \to (0,\infty)$ as 
\begin{equation*}
	J(z) \bydef 
	\displaystyle\begin{cases}
		z \left( \displaystyle\max_{\xi \in [\delta , z]} p'(\xi)+1 \right), & z \geqslant \delta,\\ ~~\\
		 z(p'(\delta)+1), & 0<z \leqslant \delta,
	\end{cases}
\end{equation*}
and we notice that 
	\begin{equation}\label{contraction}
		 	\frac{c^2}{J(M)} \norm {f-1}_{L^2(\mathbb T)}^2 
		 	\leqslant B_{\delta}    \norm {f-1}_{L^2(\mathbb T)}^2,
	\end{equation}	
for some constant $B_\delta >0$ depending only on $\delta$.	\\
Let $x_0 \in \T$, such that $f(x_0)=M$, evaluating \eqref{main_EQ} at $x_0$ yields that  
\begin{equation*}
	     \frac{c^2}{2} \left (\frac{1}{M^2} -1\right) + p(M)-p(1) + \mathcal H^{-1}(f)(x_0) =0.
\end{equation*}
Therefore, we find, by using Proposition \ref{prop:elliptic},  that  
\begin{align*}
	     \frac{c^2}{M} &= c^2 M - 2 M (p(M)-p(1)) - 2  M \mathcal H^{-1}(f)(x_0),\\
	    & \geqslant  M^4 p'(M) - 2 M (p(M)-p(1)) - 2 M \log(M),
\end{align*}
where we used that $c^2 = a^*(c)^3 p'(a^*(c)) \geqslant M^3p'(M)$, which follows from the definition \eqref{a*def} and the fact that the map $\xi \mapsto \xi^3 p'(\xi)$ is increasing.	
Dividing now by $\max_{\xi \in [\delta , M]} p'(\xi)+1$, it follows that 
\begin{equation}\label{c2 over J}
	\frac{c^2}{J(M)} \geqslant W(M),
\end{equation}
where $W$ is defined in \eqref{Wdef} and the subscript (in terms of $\delta$) in its definition is dropped for   simplicity.
 Let $\tilde{W}$ be a continuous and increasing function on $[\delta ,\infty)$, such that
\begin{equation*}
	 \tilde{W}(\xi) \leqslant W(\xi)  \quad \text{for any } \xi \geqslant \delta   \qquad \text{and} \qquad \lim_{\xi \to \infty} \tilde{W}(\xi)=\infty.
\end{equation*}
One can take $B_\delta$ large enough in \eqref{contraction} such that $B_\delta >\tilde{W}(\delta)$ (note that $B_\delta$ can depend on $p$ as well in this case).
Take now $c$ large enough such that 
\begin{equation}\label{c large}
	\tilde{W} \left( J^{-1} \left( \tfrac{c^2}{B_\delta} \right) \right) > B_\delta. 
\end{equation}
Our claim now  is to show, under the preceding condition on $c$ being sufficiently large,  that  
\begin{equation*}
	\frac{c^2}{J(M)} > B_\delta,
\end{equation*}
which, with \eqref{contraction}, will imply that $f\equiv 1$. 
Looking for a contradiction, we assume that 
\begin{equation*}
	\frac{c^2}{J(M)} \leqslant B_\delta,
\end{equation*}
which, in view of \eqref{c large}, implies that
\begin{equation*}
\delta< \tilde{W}^{-1}(B_\delta) < J^{-1} \left( \tfrac{c^2}{B_\delta} \right)  \leqslant M.	
\end{equation*}
Therefore, using \eqref{c large}, again, together with   \eqref{c2 over J}, we obtain that 
\begin{align*}
	B_\delta & <  \tilde{W} \left( J^{-1} \left( \tfrac{c^2}{B_\delta} \right) \right)  \leqslant \tilde{W}(M) \leqslant W(M) \leqslant \frac{c^2}{J(M)} \leqslant B_\delta,
\end{align*}
thereby leading to a contradiction.  This completes the proof of the proposition.
\end{proof}

\begin{proposition}[Lower bound on $\delta$]\label{prop:delta:min} 
There exists $\delta_0>0$ depending only on $p$ such that, for any solution $(c,f)$   of \eqref{main_EQ} satisfying 
 	\begin{equation*}
 		a^*(c) \geqslant 1, \qquad    \min_{x\in \mathbb T} f(x)>0, \qquad f \in C(\T),
 	\end{equation*}
  it holds that  
 	\begin{eqnarray*}
 		 2 \delta_0 \leqslant \min_{x\in \mathbb T} f(x).
 	\end{eqnarray*}
 \end{proposition}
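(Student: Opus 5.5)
Evaluate \eqref{main_EQ} at a minimum point $x_*$ of $f$, set $m \bydef f(x_*) = \min_{\mathbb T} f>0$, and play the blow-up of the local term $\mathcal G_c(m)$ as $m\to0$ against an upper bound on $-\phi(x_*)$, where $\phi \bydef \mathcal H^{-1}(f)$. At $x_*$ the equation reads
\begin{equation*}
\mathcal G_c(m) = -\phi(x_*).
\end{equation*}
The pointwise bound of Proposition \ref{prop:elliptic} (used with $\delta \in (0, \min\{m,1\})$; only the lower bound matters) gives $\phi \geqslant \log m$ on $\mathbb T$, hence $-\phi(x_*) \leqslant \log(1/m)$. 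Therefore
\begin{equation*}
\tfrac{c^2}{2}\left(\tfrac1{m^2}-1\right) + p(m)-p(1) \leqslant \log\tfrac1m .
\end{equation*}
We may assume $m<1$, since otherwise there is nothing to prove.

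The second step is to remove the dependence on $c$. The hypothesis $a^*(c)\geqslant 1$ together with \eqref{a*def} and the monotonicity of $\xi\mapsto \xi^3p'(\xi)$ gives
\begin{equation*}
c^2 = a^*(c)^3 p'(a^*(c)) \geqslant p'(1) >0 .
\end{equation*}
Since $1/m^2-1>0$, we obtain an inequality depending only on $m$ and $p$:
\begin{equation*}
\Phi(m) \bydef \tfrac{p'(1)}{2}\left(\tfrac1{m^2}-1\right) + p(m)-p(1) - \log\tfrac1m \leqslant 0 .
\end{equation*}
Now $\Phi(1)=0$, and I expect $\Phi(\xi)\to+\infty$ as $\xi\to0^+$. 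Once this is known, there is $\eta\in(0,1)$ with $\Phi>0$ on $(0,\eta)$, and $\delta_0 \bydef \eta/2$ forces $m\geqslant \eta = 2\delta_0$. This $\delta_0$ depends only on $p$, as required.

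The main obstacle is the limit $\Phi(\xi)\to\infty$. The log term is harmless, since $\log(1/\xi)=o(\xi^{-2})$. The issue is that $p(\xi)$ may tend to $-\infty$ as $\xi\to 0$; for instance $p_2(\rho) = -\kappa/\rho$. Here I use the third condition in \eqref{p_conditions1}: $\xi^3p'(\xi)\to0$ gives $p'(\xi)\leqslant \varepsilon \xi^{-3}$ for $\xi$ small. Integrating from $\xi$ to some fixed $\xi_\varepsilon$ yields
\begin{equation*}
p(\xi) \geqslant C_\varepsilon - \tfrac{\varepsilon}{2}\,\xi^{-2}.
\end{equation*}
Choosing $\varepsilon < p'(1)/2$, the term $\tfrac{p'(1)}{2}\xi^{-2}$ dominates both $\tfrac{\varepsilon}{2}\xi^{-2}$ and $\log(1/\xi)$, so $\Phi(\xi)\gtrsim \xi^{-2}\to\infty$. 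This step is exactly where the singular behaviour of the pressure near vacuum has to be controlled, and it is why the condition $\lim_{\xi\to0^+}\xi^3p'(\xi)=0$ appears in \eqref{p_conditions1}.
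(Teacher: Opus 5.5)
Your proof is correct and follows essentially the paper's argument. You evaluate \eqref{main_EQ} at the minimum point, bound $-\mathcal H^{-1}(f)$ there by $\log(1/m)$ via the pointwise estimate of Proposition \ref{prop:elliptic}, use $c^2\geqslant p'(1)$ from $a^*(c)\geqslant 1$, and control $p$ near $0$ through $\lim_{\xi\to0^+}\xi^3p'(\xi)=0$. The differences are cosmetic: the paper argues by contradiction along a sequence $m_j\to0$ and proves the full limit $\xi^2p(\xi)\to0$, while your direct version shows that only the one-sided bound $p(\xi)\geqslant C_\varepsilon-\tfrac{\varepsilon}{2}\xi^{-2}$ is needed.
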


\begin{proof}
  Towards a contradiction, we assume that there exists a sequence $(c_j,f_j)_{j\in \mathbb N}$ of solutions of \eqref{main_EQ} such that 
   	\begin{equation}\label{mj:to:0}
 		a^*(c_j) \geqslant 1, \quad  \text{for all } j \in \mathbb N,  \quad \text{and}  \quad \lim_{j\to \infty} m_j =0,  
 	\end{equation}
 	where
 	\begin{equation*}
 	    m_j \bydef  \min_{x\in \mathbb T} f_j(x) >0.
 	\end{equation*}
Let $x_j \in \mathbb T$ be such that $f_j(x_j)=m_j$. Thus, it follows by evaluating \eqref{main_EQ} at $x_j$ that
 	\begin{equation*}
 	\frac{c_i^2}{2}(m_j^{-2}-1)+ p(m_j)-p(1) = - \mathcal H^{-1}(f_j)(x_j).
 	\end{equation*}
 	Therefore,   employing the lower bound of  $c_j^2 $, together with the maximum principle from  Proposition \ref{prop:elliptic}, we obtain (for $j\gg1$) that 
  	\begin{equation*}
 		\tfrac{\left((a^*)^{-1}(1)\right)^2}{2}(m_j^{-2}-1)+p(m_j)-p(1) \leqslant \log \tfrac{1}{m_j} .
 	\end{equation*}	
 	Multiplying both sides by $m_j^2$ and applying  the result
	\begin{align}\label{xi^2p}
	\lim_{\xi \to 0^+} \xi^2 p(\xi) =0,
	\end{align}
 we obtain, as $j \to \infty$, that
 	\begin{equation*} 
 		\tfrac{\left((a^*)^{-1}(1)\right)^2}{2} \leqslant 0.
 	\end{equation*}
This would imply, in view of \eqref{a*def}, that $p'(1)=0$, which contradicts the first assumption on $p$ from  \eqref{p_conditions1}. This  shows that the scenario \eqref{mj:to:0} cannot occur, thereby concluding  the proof of the proposition.
It remains to verify \eqref{xi^2p}. First, we recall from \eqref{p_conditions1} that 
\begin{equation*}
		\lim_{\xi \to 0^+} \xi^3 p'(\xi) =0 \qquad \text{and} \qquad p'(\xi)>0	.
\end{equation*}
Let $\varepsilon>0$, there exists $\eta>0$ such that, for any $\xi \in (0,\eta)$, we have $p'(\xi) \leqslant \varepsilon/\xi^3$. Therefore, it holds that 
\begin{equation*}
	p(\eta)- p(\xi)  = \int^\eta_\xi p'(\sigma)\, \ud \sigma \leqslant  \tfrac{\varepsilon}{2\xi^2}- \tfrac{\varepsilon}{2\eta^2},
\end{equation*}
for any $\xi \in (0,\eta)$, which implies that 
\begin{equation*}
	\limsup_{\xi \to 0^+} \xi^2 p(\xi) \geqslant 0.
\end{equation*}
On the other hand, we use the monotonicity of $p$ to find that  
\begin{equation*}
	\liminf_{\xi \to 0^+} \xi^2 p(\xi)  \leqslant \liminf_{\xi \to 0^+} \xi^2 p(1) =0.
\end{equation*}
Combining the last two limits ends the proof of \eqref{xi^2p}.
\end{proof}

In the next proposition, we establish  uniform bounds for the solution $f$ in $W^{\frac12,\infty}(\T)$, and derive a sharp asymptotic expansion for any even solution $f$ to \eqref{main_EQ} in a neighborhood of its critical point. This, in particular, provides a precise description of the slope at the most singular point of $f$.
 
\begin{proposition}\label{prop:uniform_bound and Holder regularity}
	 Let $(c,f)$ be a solution to \eqref{main_EQ} satisfying, for some $\delta\in(0,1)$, that
	\begin{equation*}
		 \delta \leqslant   f(x) \leqslant a^*(c), \quad \text{for all } x\in \mathbb T.
	\end{equation*}
	Then, there exists a constant $C=C(\delta)>0$ (that is independent of $c$), such that 
	\begin{equation*}
		\|f\|_{L^\infty(\T)} + \sup_{x \neq y}  \frac{|f(x)-f(y)|}{|x-y|^\frac{1}{2}}\leqslant C.
	\end{equation*}
	Moreover, if $f$ is even and $f(0)=a^*(c)$, then it holds that 
\begin{equation*}
	a^*(c)-f(x)= \theta |x|+ \mathcal O (|x|^{3/2}),
\end{equation*}
for $|x|\ll 1$, where 
\begin{equation*}
	\theta \bydef \sqrt{\tfrac{-\phi''(0)}{\mathcal G_c''(a^*(c))}}  =  \sqrt{\tfrac{a^*(c)-\mathrm{e}^{-\mathcal G_c(a^*(c))}}{\mathcal G_c''(a^*(c))}}.
\end{equation*}
\end{proposition}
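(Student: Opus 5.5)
The plan is to set $\phi\bydef\mathcal H^{-1}(f)$ and rewrite \eqref{main_EQ} as $\mathcal G_c(f)=-\phi$. Everything is then read off from the quadratic behaviour of $\mathcal G_c$ near its critical point (Lemma \ref{lemma:MVT}) combined with the $C^2$ control of $\phi$ from Proposition \ref{prop:elliptic}.

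\textbf{Uniform bounds.} First I bound $\|f\|_{L^\infty}$ independently of $c$. If $a^*(c)\le 1$ this is trivial. Otherwise, Proposition \ref{prop:bound:c} says that any nontrivial solution has $|c|\le K_\delta$, so $a^*(c)\le a^*(K_\delta)$; for the trivial solution $f\equiv1$ there is nothing to prove. Hence $\delta\le f\le C(\delta)$. Proposition \ref{prop:elliptic} then gives $\|\phi\|_{C^2}\le C(\delta)$, and in particular $\phi$ is uniformly Lipschitz.

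\textbf{H\"older estimate.} Next I apply Lemma \ref{lemma:MVT} to $h=\mathcal G_c$ on $[\delta,a^*(c)]$, where $\mathcal G_c'\le0$ and $\mathcal G_c''>0$. This gives
\[
M_1|f(x)-f(y)|^2\le|\mathcal G_c(f(x))-\mathcal G_c(f(y))|=|\phi(x)-\phi(y)|\le C|x-y|,
\]
which is the $\tfrac12$-H\"older bound. The main obstacle is making $M_1=\tfrac12\min_{[\delta,a^*(c)]}\mathcal G_c''$ bounded below uniformly in $c$. Write
\[
\mathcal G_c''(\xi)=\frac{3c^2}{\xi^4}+p''(\xi)=\frac{3c^2-3\xi^3p'(\xi)}{\xi^4}+\frac{3p'(\xi)+\xi p''(\xi)}{\xi}.
\]
The first term is nonnegative for $\xi\le a^*(c)$, and the second is positive and continuous on the compact range $[\delta,C(\delta)]$. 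So $\mathcal G_c''\ge\min_{[\delta,C(\delta)]}\big(3p'+\xi p''\big)/\xi>0$, uniformly in $c$. The trivial case and the degenerate case $c\to0$ are covered by the same bound, since it does not involve $c$.

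\textbf{Asymptotics at the maximum.} Now assume $f$ is even with $f(0)=a^*(c)$. Then $\phi$ is even and $C^2$, so $\phi'(0)=0$ and
\[
\phi(x)-\phi(0)=\tfrac12\phi''(0)x^2+o(x^2).
\]
To get the $\mathcal O(|x|^{3/2})$ remainder I would upgrade this to $\phi(x)-\phi(0)=\tfrac12\phi''(0)x^2+\mathcal O(|x|^{5/2})$. This follows from $\phi''=\mathrm e^\phi-f$, which is $\tfrac12$-H\"older by the previous step.

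Taylor expanding $\mathcal G_c$ at $a^*(c)$ gives
\[
\mathcal G_c(f)-\mathcal G_c(a^*)=\tfrac12\mathcal G_c''(a^*)(a^*-f)^2+\mathcal O\big((a^*-f)^3\big).
\]
Setting this equal to $\phi(0)-\phi(x)$ yields
\[
(a^*-f)^2\big(1+\mathcal O(a^*-f)\big)=\theta^2x^2+\mathcal O(|x|^{5/2}),
\]
where I use $a^*-f=\mathcal O(|x|^{1/2})$. Taking square roots is legitimate provided $\theta>0$, and gives $a^*-f=\theta|x|+\mathcal O(|x|^{3/2})$; the error terms are handled by a bootstrap, since a first pass already shows $a^*-f=\mathcal O(|x|)$.

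It remains to identify $\theta$. At $x=0$ we have $\phi(0)=-\mathcal G_c(a^*)$, so $-\phi''(0)=f(0)-\mathrm e^{\phi(0)}=a^*-\mathrm e^{-\mathcal G_c(a^*)}$. For positivity of $\theta$ I need $\phi''(0)<0$. I would argue that $\phi$ attains its maximum at $0$, because $f(0)=a^*$ is where $-\mathcal G_c(f)$ is largest, so $\phi''(0)\le0$. For strictness, I would exclude $\phi''(0)=0$ by reading off $\mathrm e^{\phi(0)}\le f(0)$ from the equation at the maximum and then using the strong maximum principle on $-\partial_x^2\phi+\mathrm e^{\phi}=f$, together with $f\not\equiv a^*$. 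If this strict sign cannot be obtained in general, the expansion still holds with $\theta\ge0$.
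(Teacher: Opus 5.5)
Your proposal is correct and follows the paper's proof essentially step by step. The steps match: the $c$-independent $L^\infty$ bound via Proposition \ref{prop:bound:c}; the $\tfrac12$-H\"older estimate from Lemma \ref{lemma:MVT} with the same uniform convexity bound $\mathcal G_c''(\xi)\geqslant (3p'(\xi)+\xi p''(\xi))/\xi$ on $[\delta,a^*(c)]$; and the Taylor comparison at the maximum, using $\phi''=\mathrm{e}^\phi-f\in C^{1/2}$ to identify $\theta$. Your strong-maximum-principle argument that $\phi''(0)=0$ forces $f\equiv a^*(c)$ is a worthwhile addition. The paper takes the square root here without addressing the case $\theta=0$, and only establishes $\theta>0$ later (Proposition \ref{pro:Lipregularity}) under the half-period monotonicity assumption.
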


\begin{remark}
	Note that, by virtue of   Proposition \ref{prop:delta:min}, if we further assume that $a^*(c) \geqslant 1$, then the hypothesis $f(x) \geqslant \delta>0$ can be dropped.
\end{remark}

\begin{remark}
The uniform $\frac12$-H\"older bound will be upgraded to the Lipschitz regularity for solutions with half-period monotonicity in Proposition~\ref{pro:Lipregularity}, below. Accordingly, the error term in the asymptotic expansion near the critical point from the preceding proposition  will also be  refined in Proposition~\ref{pro:Lipregularity}, later on .
\end{remark}

\begin{proof}	

We proceed in three separate steps.

\paragraph{\underline{\em $L^\infty$ bound}}

Let   $K_{\delta}$ be the constant from Proposition \ref{prop:bound:c}, and we consider two cases. If $|c| \leqslant K_{\delta}$, then $f(x) \leqslant a^*(K_{\delta})$. Otherwise, if $|c| > K_{\delta}$, then $f \equiv 1$, by Proposition \ref{prop:bound:c}. In either case, $f$ is bounded above by a constant that does not depend on the speed $c$. This concludes the proof of the $L^\infty$ bound of $f$.

\paragraph{\underline{\em H\"older regularity}}

Recall that $\phi = \mathcal H^{-1}(f)$, and by virtue of Proposition \ref{prop:elliptic}, one can easily deduce the bound 
\begin{equation}\label{bound:phi:Lip*}
	|\phi(x)-\phi(y)| \lesssim_{\delta} |x-y|,
\end{equation} 
which will be used later on at the end of the proof.

Now, evaluating \eqref{main_EQ} at two different points $x,y\in \mathbb T$ and taking the difference of the resulting equations   yields that  
	\begin{equation}\label{equa:difference}
		\mathcal G_c \big ( f(x)\big ) - \mathcal G_c \big ( f(y)\big ) = \mathcal H^{-1}(f )(y)-\mathcal H^{-1}(f )(x)=\phi(y)-\phi(x),
	\end{equation}
	where we recall that the function $\mathcal G_c$ is given by
	\begin{equation*}
		\mathcal G_c (\xi)= \tfrac{c^2}{2}(\xi^{-2}-1) + p(\xi)-p(1), \quad \xi>0.
	\end{equation*}
	One can check by a direct computation that $\mathcal G_c$ fulfills the conditions stated in Lemma \ref{lemma:MVT}, ensuring in particular the lower bound 
	\begin{equation*}
		|\mathcal G_c(f(x))- \mathcal G_c(f(y))| \geqslant  \tfrac{1}{2}\min _{z\in \left[\delta,\|f\|_{C(\T)}\right]} | \mathcal G_c''(z)|  |f(x)-f(y)|^2,\quad \text{for all } x,y \in \T.
	\end{equation*}
We find, on the one hand, that
\begin{align}\nonumber
	\inf_{\xi \in [\delta, a^*(c)]}  \mathcal G_c''(\xi)
	&= \inf_{\xi \in [\delta, a^*(c)]} \left( 3 c^2 \xi^{-4}+p''(\xi) \right)\\ \nonumber
	 &= \inf_{\xi \in [\delta, a^*(c)]} \left( 3 a^*(c)^3 p'(a^*(c))\xi^{-4}+ p''(\xi) \right)\\ \nonumber
	 &\geqslant \inf_{\xi \in [\delta, a^*(c)]}  \left( 3 \xi^{-1} p'(\xi)+ p''(\xi) \right)\\ \label{infG''}
	 &\geqslant \inf_{\xi \in [\delta, a^*(K_{\delta})]}  \left( 3 \xi^{-1} p'(\xi)+ p''(\xi) \right)>0,\nonumber
\end{align}	
where we used \eqref{a*def} together with the  condition \eqref{p_conditions1}. Therefore, we deduce, in view of \eqref{equa:difference}, that 
\begin{equation*}
		|f(x)-f(y)|^2 \lesssim_{\delta}|\phi(x)- \phi(y)| ,\quad \text{for all } x,y \in \T.
	\end{equation*}
By employing the bound \eqref{bound:phi:Lip*}, we obtain the H\"older control of $f$, thereby concluding the  proof of  the desired estimate.

\paragraph{\underline{\em Asymptotic behavior}}

In order to prove the asymptotic behavior near the absolute maximum, we start by noting that the condition  $f(0)=a^*(c)$ forces $c$ to be bounded. Indeed, Proposition \ref{prop:bound:c} shows that if $c$ is large enough (more precisely, larger than the constant $K_{\delta}$ from that proposition), then this would imply that $f(0)=a^*(c)=1$, which contradicts the assumption  \eqref{a^*lim}. We may then assume that 
\begin{equation*}
	|c| \leqslant K_{\delta},
\end{equation*}
where the constant $K_{\delta}$ is the one from Proposition \ref{prop:bound:c}.
Evaluating  \eqref{main_EQ} at $x=0$ and at an arbitrary $x\in \T$, with $|x|\ll1$, and taking the difference between the resulting equations   yields that  
	\begin{equation*}
		\mathcal G_c \big ( a^*(c) \big ) - \mathcal G_c \big ( f(x)\big ) =\phi(x)-\phi(0).
	\end{equation*}		
On the one hand, by the H\"older regularity of $f$  established before, we deduce that $\phi \in C^2$ and, eventually, that  $\phi'' \in C^\frac{1}{2}(\T)$. Therefore, the expansion
	\begin{equation*}
	\phi(x)-\phi(0)=\tfrac12 \phi''(0)x^2+ \mathcal{O} (|x|^{5/2}) 
	\end{equation*}	
follows, where we used  that $\phi'(0)=0$, which is a consequence of parity of the function $\phi$. 
\\On the other hand, we write, for some $\xi \in [f(x),a^*(c)]$, that 
\begin{equation*}
 \mathcal G_c \big ( f(x)\big )-\mathcal G_c \big ( a^*(c) \big ) = \tfrac12  \mathcal G_c'' (\xi) \big(a^*(c)-f(x)\big)^2,
\end{equation*}
which follows from  Taylor expansion combined with  the fact that $\mathcal G_c' (a^*(c))=0 $. 
All in all, we deduce from the preceding identities  that 
\begin{equation*}
{\big(a^*(c)-f(x)\big)^2= \frac{1}{ \mathcal G_c'' (\xi)}  \left(  -\phi''(0)x^2+ \mathcal{O} (|x|^{5/2}) \right)}.
\end{equation*}
Observe now that the H\"older regularity of $f$, together with the Mean Value Theorem, allows us to obtain the bound
\begin{equation*}
\big| \mathcal G_c'' (\xi) - \mathcal G_c'' (a^*(c)) \big|  \leqslant
\sup_{\eta \in [\delta, a^*(K_{\delta})]} \left| \mathcal G_c''' (\eta) \right| \left( a^*(c)-f(x) \right) \lesssim_{\delta} {|x|^{1/2}}
\end{equation*}
which allows us to find  that 
\begin{equation*}
	\frac{1}{\mathcal G_c'' (\xi)}=\frac{1}{\mathcal G_c'' (a^*(c))+\mathcal O (|x|^{1/2})} = \frac{1}{\mathcal G_c'' (a^*(c))} +\mathcal O (|x|^{1/2}).
\end{equation*}
Therefore, we conclude that  
\begin{equation*}
 \big(a^*(c)-f(x)\big)^2	=\tfrac{-\phi''(0)}{ \mathcal G_c'' (a^*(c))}x^2+ \mathcal{O} (|x|^{5/2}),
\end{equation*}
which leads to the expansion  
\begin{equation*}
a^*(c)-f(x)	=\sqrt{ \tfrac{-\phi''(0)}{ \mathcal G_c'' (a^*(c)) }} |x|+ \mathcal{O} (|x|^{3/2}).
\end{equation*}
Finally, evaluating both equations $\mathcal H(\phi)=f$ and \eqref{main_EQ} at $x=0$ gives that
\begin{equation*}
	-\phi''(0)=f(0)-\mathrm{e}^{\phi(0)}=f(0)-\mathrm{e}^{-\mathcal G_c(f(0))},
\end{equation*}
which allows us to compute the coefficient of $|x|$ in the last asymptotic expansion.  This completes  the proof of the proposition.
\end{proof}

It will be important in the sequel to understand how any solution $f$ to \eqref{main_EQ} relates to the equilibrium configuration $f \equiv 1$. The following lemma provides a first insight into this relationship. In particular, it shows that any nontrivial solution must oscillate around the equilibrium level. This observation will play a useful role in the global analysis carried out in Section \ref{sec:global}, for instance in the study of compactness properties along the bifurcation curve (see Proposition \ref{prop:openness:cone}).

\begin{lemma}[Oscillation around equilibrium]\label{lamma:sign}
	Let $(c,f)$ be a solution of \eqref{main_EQ}, where $f$ is continuous and satisfies 
	\begin{equation*}
		f(x_0)=1, \quad \text{for some } x_0\in \mathbb T.
	\end{equation*}
	Then, either
	\begin{equation*}
		f(x)=1, \quad \text{for all } x\in \mathbb T,
	\end{equation*}
	or the function 
	\begin{equation*}
		x\mapsto f(x)-1
	\end{equation*}
	changes its sign on $\mathbb T$.
\end{lemma}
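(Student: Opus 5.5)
The plan is to argue by contradiction through the potential $\phi\bydef\mathcal H^{-1}(f)$, exploiting the single pointwise identity available at $x_0$. Since $\mathcal G_c(1)=0$ by \eqref{G_c:def}, evaluating \eqref{main_EQ} at $x_0$ gives
\begin{equation*}
\phi(x_0)=-\mathcal G_c(f(x_0))=-\mathcal G_c(1)=0 .
\end{equation*}
So it suffices to show that if $f-1$ has a constant sign and $f\not\equiv 1$, then $\phi$ has a strict sign everywhere on $\mathbb T$. Suppose then that $f-1$ does not change sign. By the symmetry $f-1\mapsto -(f-1)$, which flips all the inequalities below, I will treat only the case $f\geqslant 1$ on $\mathbb T$ with $f\not\equiv 1$, and aim for a contradiction.

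\emph{Step 1 (weak sign).} The last estimate of Proposition \ref{prop:elliptic} gives $\phi\geqslant \log(\min f)\geqslant \log 1=0$ on $\mathbb T$. Here Proposition \ref{prop:elliptic} applies because $f$ is continuous with $\min f\geqslant 1$. Alternatively, one can evaluate $\mathcal H(\phi)=f$ at a minimum point of $\phi$.

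\emph{Step 2 (strict sign via the linear Green function).} Following the proof of Proposition \ref{prop:elliptic}, I will rewrite the equation with the function $k$ from \eqref{k def} as
\begin{equation*}
-\partial_x^2\phi+k(\phi)\phi=f-1 .
\end{equation*}
Since $\phi$ is bounded and $k$ is continuous and positive, I can fix a constant $\lambda>\max_{\mathbb T}k(\phi)$. Then
\begin{equation*}
\mathcal L_\lambda\phi=(f-1)+\big(\lambda-k(\phi)\big)\phi\eqqcolon h .
\end{equation*}
By Step 1 and the choice of $\lambda$, the function $h$ is continuous, satisfies $h\geqslant f-1\geqslant 0$, and is not identically zero because $f\not\equiv 1$. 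Hence $\phi=G_\lambda\ast h$. Since the kernel $G_\lambda$ in \eqref{Gdef} is strictly positive on $\mathbb T$, this yields $\phi(x)>0$ for every $x\in\mathbb T$, which contradicts $\phi(x_0)=0$. In the case $f\leqslant 1$ the same argument gives $\phi\leqslant 0$, then $h\leqslant 0$ with $h\not\equiv 0$, and finally $\phi<0$ everywhere, again a contradiction.

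The only delicate point is Step 2, which is a strong maximum principle for the nonlinear operator $\mathcal H$. I expect it to be handled cleanly by this shift-by-$\lambda$ reduction to the explicit positive kernel of $\mathcal L_\lambda^{-1}$. This is in the spirit of the linearization strategy of Section \ref{elliptic_problem_lin}, and it avoids any Hopf-type argument. One small check is that the lower bound $\inf f\geqslant\delta>0$ needed in Proposition \ref{prop:elliptic} holds: it does trivially when $f\geqslant 1$, and when $f\leqslant 1$ it follows from the standing positivity and continuity of $f$ on the compact torus.
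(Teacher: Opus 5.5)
Your proof is correct and uses the same ingredients as the paper's: $\phi(x_0)=0$, the representation $\phi=G_\lambda\ast(\lambda\phi-\partial_x^2\phi)$ with a strictly positive kernel, and the choice $\lambda\geqslant\sup k(\phi)$ when $\phi\geqslant 0$. The paper argues directly rather than by contradiction. From $\phi(x_0)=0$ it deduces that $h$ vanishes identically or changes sign. It then uses $\lambda=1$ with $\mathrm{e}^\xi-\xi\geqslant 1$ to find a point where $f>1$, and $\lambda=\sup k(\phi)$ (or the bound $\log\min f\leqslant\inf\phi$ of Proposition \ref{prop:elliptic}) to find a point where $f<1$. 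You instead first fix the sign of $\phi$ via that maximum principle and then obtain a strict sign for $\phi$, contradicting $\phi(x_0)=0$.
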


\begin{proof}
We recall the notation $ \phi=\mathcal H^{-1}(f)$ where $\phi $ is governed by the equation 
 \begin{equation}\label{phi:equa}
	 	-\partial_x^2 \phi + \mathrm{e}^\phi = f.
	 \end{equation}
Further introduce, for any $\lambda >0$, the continuous function  $h$ by setting
\begin{equation}\label{h:equa}
	  h \bydef \lambda \phi -\partial_x^2 \phi.
\end{equation}
	Assuming that $f(x_0)=1$, for some $x_0 \in \mathbb T$, yields, from \eqref{main_EQ}, that 
	\begin{equation*}
		\phi(x_0)=0,
	\end{equation*}
	for that particular $x_0$. Thus, since
	$$\phi = G_{\lambda} \ast h,$$
	 where $G_{\lambda}$ is a positive kernel defined in \eqref{Gdef}, we have that either $h \equiv 0$ or $h$ is sign changing.

	 Now, if $h \equiv 0$ for some $\lambda >0$, then $\phi \equiv 0$ and therefore $f \equiv 1$, which follows from  \eqref{phi:equa}.
	 It then remains to show that if $h$ changes its sign for all $\lambda>0$, then $f-1$ does so, as well. To that end, let us first take  $\lambda=1$, and pick $x_+ \in \mathbb T$ such that 
	\begin{equation*}
		h(x_+)>0.
	\end{equation*}
	 Thus, it follows by combining \eqref{phi:equa} and \eqref{h:equa}  that 
		\begin{equation*}
		f(x_+)=h(x_+)+ \mathrm{e}^{\phi(x_+)} -\phi(x_+)>1,
	\end{equation*}
	where we used the elementary bound 
	\begin{equation*}
		\mathrm{e}^\xi-\xi \geqslant 1, \quad \text{for all } \xi\in \mathbb R.
	\end{equation*}
	We shall now prove that $\inf_{x\in \mathbb T} f(x)<1$, and we begin with  considering  the case 	$\inf_{x\in \mathbb T} \phi < 0$ at first. To proceed further, we employ Proposition \ref{prop:elliptic} to write that 
	\begin{equation*}
			 \log (\inf_{x\in \mathbb T} f(x)) \leqslant \inf_{x\in \mathbb T}\phi(x)<0,
	\end{equation*}
	which gives that $\inf_{x\in \mathbb T} f(x)<1$ in the case when $\inf_{x\in \mathbb T} \phi < 0 $. On the other hand, in the event when $\inf_{x\in \mathbb T} \phi \geqslant 0$,  we make the choice   of $\lambda $ as
	\begin{equation*}
	\lambda \bydef \sup_{x\in \mathbb T} k(\phi(x)),
	\end{equation*}
where $k$ is defined in \eqref{k def}.
Accordingly, by picking any $x_-\in \mathbb T$ such that $h(x_-)<0$, it follows, from \eqref{phi:equa} and \eqref{h:equa} again, that  
$$f(x_-)=h(x_-) + \mathrm{e}^{\phi(x_-)} -\lambda\phi(x_-) <1,$$
where the last inequality on the right-hand side is based on the particular choice of the parameter $\lambda$. This concludes the proof of the lemma. 
 \end{proof}

\subsection{Solutions with half-period monotonicity}
In this section, we focus on a class of solutions $(c,f)$ to \eqref{main_EQ} exhibiting a monotonicity property on half of the period, namely that $f$ is non-decreasing on $(-\nicefrac{L}{2},0)$. This structural assumption turns out to be particularly relevant in the analysis of the equation and will allow us to derive additional qualitative properties of the solutions.
\\
Importantly, the solutions that will be constructed later in the paper by means of bifurcation arguments will be shown to satisfy this half-period monotonicity. As such, the results established in this section will play a key role in the subsequent analysis.

\begin{lemma}\label{lem:strict_monotonicity}
	Let $L >0$ and $(c,f)$ be any given  solution to \eqref{main_EQ}. Assume that $f\in C^1(\R - L\Z) \cap C(\T)$ is $L$-periodic,  even, non-constant, non-decreasing on $(-\nicefrac L2,0)$ and satisfies the upper bound
\begin{equation*}
	f(x)\leqslant a^*(c).
\end{equation*}
Then, it holds that  
	\begin{equation*}
		f'(x)>0\qquad \text{ and } \qquad f(x)< a^*(c),  
	\end{equation*}
	for any $x\in (\nicefrac L2,0)$, while 
	\begin{equation*}
		\partial_x^2 \mathcal{H}^{-1}(f)(0)<0.
	\end{equation*}
Moreover, if $f\in C^2(\T)$, then one has that 
\begin{equation*}
	f''(0)<0 \qquad \text{and} \qquad f''(-L/2)>0.
\end{equation*}
\end{lemma}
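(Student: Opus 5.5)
Set $\phi \bydef \mathcal H^{-1}(f)$. The strategy is to move the problem onto the linear operator $\mathcal L_\lambda^{-1}$ and then transfer the conclusions back through the local relation $\mathcal G_c(f)=-\phi$.

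\emph{Step 1: monotonicity of $\phi$.} Since $f$ is even, uniqueness in Proposition~\ref{prop:elliptic} shows that $\phi$ is even. The plan is to prove $\phi'>0$ on $(-\nicefrac L2,0)$. I would differentiate the equation $-\phi''+\mathrm{e}^\phi=f$ on $\R\setminus L\Z$, where $f$ is $C^1$, to get
\begin{equation*}
-\psi''+\mathrm{e}^{\phi}\psi=f', \qquad \psi\bydef\phi'.
\end{equation*}
The function $\psi$ is odd. By Lemma~\ref{lem:G}, the odd-function Green representation of $\mathcal L_\lambda^{-1}$ has a positive kernel on the half period. So I expect $\psi>0$ on $(-\nicefrac L2,0)$ to follow from a maximum principle for $-\partial_x^2+\mathrm{e}^\phi$ on $(-\nicefrac L2,0)$ with Dirichlet data $\psi(-\nicefrac L2)=\psi(0)=0$ and source $f'\geqslant0$, $f'\not\equiv0$.

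The argument runs as follows. Suppose $\psi$ attains a nonpositive minimum at an interior point. If that minimum is negative, then $\psi''\geqslant0$ there gives $f'=-\psi''+\mathrm{e}^\phi\psi<0$, a contradiction. So $\psi\geqslant0$. The strong maximum principle (Hopf) then gives $\psi>0$, because $\psi\equiv0$ on a subinterval would force $f'\equiv0$ there. To make this clean I would rather use the integral form: freeze $\lambda$ and write $\psi=\mathcal L_\lambda^{-1}\big(f'+(\lambda-\mathrm{e}^\phi)\psi\big)$ with $\lambda\geqslant\max \mathrm{e}^\phi$. This needs only $f'\in L^1$, and $f'$ is integrable since $f$ is monotone on the half period.

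The same choice of $\lambda$ gives the sign at the top. Set $h\bydef\lambda\phi-\phi''=f+\lambda\phi-\mathrm{e}^\phi$. Then $h'=f'+(\lambda-\mathrm{e}^\phi)\phi'\geqslant0$ on $(-\nicefrac L2,0)$, so $h$ is even, non-decreasing there and non-constant (if $h$ were constant, $\phi$ would be constant and hence $f$ would be too). Lemma~\ref{parity_elliptic} then yields $\phi'>0$ on $(-\nicefrac L2,0)$, $\phi''(0)<0$ and $\phi''(-\nicefrac L2)>0$. This $h$-argument is the cleanest route and I would use it directly. It is circular only in appearance: the sign $\phi'\geqslant0$ needed for $h'\geqslant0$ is what we want to prove. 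I would close the loop with the maximum principle argument above for $\psi$, or by a continuity/iteration argument in $\lambda$.

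\emph{Step 2: transfer back to $f$.} By \eqref{main_EQ}, $\mathcal G_c(f)=-\phi$, and $-\phi$ is strictly decreasing on $(-\nicefrac L2,0)$. Suppose $f(x_1)=a^*(c)$ for some $x_1<0$. Since $f$ is non-decreasing and bounded by $a^*(c)$, we get $f\equiv a^*(c)$ on $[x_1,0]$, so $\phi$ is constant there, contradicting Step~1. Hence $f<a^*(c)$ on $(-\nicefrac L2,0)$. On this set $\mathcal G_c$ is a local diffeomorphism with $\mathcal G_c'<0$, so $f=\mathcal G_c^{-1}(-\phi)$ and
\begin{equation*}
f'=-\phi'/\mathcal G_c'(f)>0.
\end{equation*}

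\emph{Step 3: second derivatives for $f\in C^2(\T)$.} At $x=0$, since $f$ is $C^2$ and $f'(0)=0$, differentiating twice gives $\mathcal G_c''(f)(f')^2+\mathcal G_c'(f)f''=-\phi''$. Evaluating at $0$ leaves $\mathcal G_c'(f(0))f''(0)=-\phi''(0)>0$. If $f(0)<a^*(c)$, then $\mathcal G_c'<0$ and so $f''(0)<0$. If $f(0)=a^*(c)$, then the left side vanishes, which contradicts $\phi''(0)<0$; so this case cannot occur for $C^2$ solutions. The computation at $-\nicefrac L2$ is identical and uses $\phi''(-\nicefrac L2)>0$ and $f(-\nicefrac L2)<a^*(c)$.

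The main obstacle is Step~1: obtaining the strict sign of $\phi'$ despite the nonlinear coefficient $\mathrm{e}^\phi$ and the possible non-smoothness of $f$ at $0$. The device of writing $\phi''=\lambda\phi-h$ with large $\lambda$ and applying Lemma~\ref{parity_elliptic} is how I would handle it.
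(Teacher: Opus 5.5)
Your proof is correct, and its core matches the paper's. Both use the auxiliary function $h=\lambda\phi-\phi''$ with $\lambda\geqslant\max\mathrm{e}^{\phi}$, apply Lemma~\ref{parity_elliptic} to $h$, and transfer back to $f$ through $\mathcal G_c'(f)f'=-\phi'$.

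The difference is in how you obtain the preliminary sign $\phi'\geqslant0$, which you treat as the main obstacle. There is in fact no circularity to break. Differentiating $\mathcal G_c(f)=-\phi$ on $(-\nicefrac L2,0)$ gives $\phi'=-\mathcal G_c'(f)f'$. Since $f\leqslant a^*(c)$ forces $\mathcal G_c'(f)\leqslant0$ and $f'\geqslant0$, the sign $\phi'\geqslant0$ is immediate; this one line is exactly what the paper uses.

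Your weak maximum principle for $\psi=\phi'$ is nonetheless rigorous. On $[-\nicefrac L2,0]$ you have $\psi(-\nicefrac L2)=\psi(0)=0$ and $\psi\in C^2$ in the interior, so a negative interior minimum would force $f'<0$ there. This route also proves something the paper's does not. It never uses the traveling-wave relation, so it shows that $\mathcal H^{-1}$ itself maps even data that are non-decreasing on the half period to functions with the same property. Your Hopf and integral-form remarks are unnecessary, since Lemma~\ref{parity_elliptic} already upgrades $\phi'\geqslant0$ to $\phi'>0$.

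Your Step~3 also departs from the paper, to your advantage. The paper rules out $f(0)=a^*(c)$ for $C^2$ solutions by invoking the corner asymptotics of Proposition~\ref{prop:uniform_bound and Holder regularity}. That contradiction with $C^2$ regularity only works because $\theta>0$, i.e.\ because $\phi''(0)<0$ has already been established. You instead read the exclusion off directly from $\mathcal G_c'(f(0))f''(0)=-\phi''(0)>0$, which is shorter and self-contained.
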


\begin{proof}
We recall the notation $\phi = \mathcal{H}^{-1}(f)$, where $\phi$ satisfies \eqref{phi:equa}, as well as the definition of the continuous function $h$ given in \eqref{h:equa}, for any $\lambda>0$.
Now, differentiating \eqref{main_EQ} with respect to $x$ on $(-\nicefrac L2,0)$, we obtain that 
\begin{equation}\label{diff:main}
\mathcal	G_c'(f) f' = - \phi'.
\end{equation}
Since $f'\geqslant 0$ on $(-\nicefrac L2,0)$, and $\mathcal	G_c '(f) \leqslant 0$, it then follows that  $\phi' \geqslant 0$.
 Therefore, making the choice of $\lambda$ as 
 \begin{equation*}
 	\lambda \bydef \exp(\sup_x \phi(x)),
 \end{equation*}
 we find, for any $x \in (-\nicefrac L2,0)$, that
\begin{equation}\label{h'f'}
	h' =f'+ (\lambda-\mathrm{e}^\phi ) \phi' \geqslant f' \geqslant 0.
\end{equation} 
Let us point out in passing that, since $f$ is even, it follows that $\phi$, and hence $h$, are also even. Moreover, since $f$ is assumed to be non-constant, it is readily seen that $h$ cannot be constant. Indeed, from the relation
\begin{equation*}
	\phi = G_{\lambda} * h,
\end{equation*}
it follows that if $h$ were constant, then $\phi$, and consequently $f$, would also be constant functions.
   Applying now Lemma \ref{parity_elliptic} for $h$, we obtain that
\begin{equation*}
	\phi''(0)<0 \qquad \text{and} \qquad \phi' >0,\quad  \text{on } (-L/2,0).
\end{equation*}
It then follows  from \eqref{diff:main} that 
\begin{equation*}
	f'>0 \qquad \text{and} \qquad \mathcal	G_c'(f) <0, \quad \text{on}  \quad (-L/2,0),
\end{equation*}
which ends the proof of the first part of the lemma. 
\\
In the case when $f \in C^2(\T)$, differentiating \eqref{diff:main} with respect to $x$ yields  
\begin{equation*}
	\mathcal	G_c''(f) (f')^2 +\mathcal	G_c'(f) f'' = - \phi''.
\end{equation*}
Thus, evaluating at $x=0$ implies that 
\begin{equation*}
\mathcal	G_c'(f(0)) f''(0) = - \phi''(0).
\end{equation*}
Now observe that, once the assumption $f \in C^2(\T)$ is made, one immediately deduces that 
\begin{equation*}
	\sup_ {x\in \mathbb T} f(x)=f(0) < a^*(c).
\end{equation*}
Indeed, this follows as a consequence of Proposition \ref{prop:uniform_bound and Holder regularity} as, if $f(0)=a^*(c)$ were to hold, then the exact asymptotic expansion at $x=0$ given by Proposition \ref{prop:uniform_bound and Holder regularity} would contradict the $C^2$ regularity of $f$ at $x=0$.

This said,  we thus have that $\mathcal	G_c'(f(0))<0$, which leads to the bound $f''(0)<0$. Establishing the lower bound $f''(-\nicefrac L2)>0$ can be done similarly. This concludes the proof.
\end{proof}
In the previous section, we discussed local and global regularity properties of solutions to \eqref{main_EQ}. We now show that solutions with half-period monotonicity enjoy improved regularity compared to that of Proposition \ref{prop:uniform_bound and Holder regularity}. The proof relies on the following classical lemma.

\begin{lemma}\label{lemma:W2infty}
Any even function $f$ that belongs to $C^2(\R)\cap \dot  W^{2,\infty}(\R)$ enjoys  the bound 
\begin{equation*}
	\sup_{\substack{x,y \in \R\\ x \neq y}}  \frac{|f(x)-f(y)|}{|x^2-y^2|} \leqslant \tfrac{1}{2} \|f'' \|_{L^\infty (\R)}  . 
\end{equation*}
\end{lemma}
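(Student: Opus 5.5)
The plan is to reduce the statement to a one-variable estimate on the even function $g(t)\bydef f(\sqrt{t})$ for $t\geqslant 0$. Since $f$ is even, $f(x)=f(|x|)$ and $|x^2-y^2|=\big||x|^2-|y|^2\big|$, so it suffices to treat $x,y\geqslant 0$ with $x\neq y$. By the fundamental theorem of calculus,
\begin{equation*}
f(x)-f(y)=\int_y^x f'(s)\,\ud s .
\end{equation*}
The key observation is that evenness and $C^1$ regularity force $f'(0)=0$. Hence, for every $s\geqslant 0$,
\begin{equation*}
|f'(s)|=|f'(s)-f'(0)|\leqslant \|f''\|_{L^\infty(\R)}\, s .
\end{equation*}

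Inserting this bound into the integral gives, for $0\leqslant y<x$,
\begin{equation*}
|f(x)-f(y)|\leqslant \|f''\|_{L^\infty(\R)}\int_y^x s\,\ud s=\tfrac12\|f''\|_{L^\infty(\R)}\,(x^2-y^2).
\end{equation*}
The case $x<y$ follows by symmetry. For general signs of $x,y$, I would replace them by $|x|,|y|$, using $f(x)=f(|x|)$. If $|x|=|y|$ but $x\neq y$, the left-hand quotient is not defined because the denominator vanishes, but the numerator vanishes as well. I will therefore read the supremum as taken over pairs with $x^2\neq y^2$, or adopt the convention $0/0=0$.

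There is no real obstacle. The only points needing care are justifying $f'(0)=0$ (immediate, since $f'$ is odd and continuous) and the treatment of pairs with $|x|=|y|$. Equivalently, the argument shows that $t\mapsto f(\sqrt t)$ is Lipschitz on $[0,\infty)$, with derivative $f'(\sqrt t)/(2\sqrt t)$ bounded by $\tfrac12\|f''\|_{L^\infty}$.
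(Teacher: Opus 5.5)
Your proof is correct and is essentially the paper's argument: the paper passes to $g(z)=f(\sqrt z)$ and bounds $g'(z)=f'(\sqrt z)/(2\sqrt z)$ using $f'(0)=0$, which is your estimate $|f'(s)|\leqslant \|f''\|_{L^\infty}\, s$ in differentiated form. Your integrated version has the small advantage of avoiding the l'H\^opital step at $z=0$. Your remark that pairs with $x=-y$ give a $0/0$ quotient is a legitimate clarification, which the paper silently glosses over.
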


\begin{proof}
	Let $g$ be defined on $[0,\infty)$ by  
	\begin{equation*}
		g(z) \bydef f(\sqrt{z}), \quad z \geqslant 0.
	\end{equation*}
	Note, as $f$ is even and $C^1$ we get  that $f'(0)=0$. Thus, by  l'H\^opital's rule, we obtain that
	\begin{equation*}
	g'(0)=\tfrac12 f''(0), \qquad 	g'(z) = \frac{f'(\sqrt{z})} {2\sqrt{z}}\quad  \text{for any } z > 0. 
	\end{equation*}
Therefore, it follows that
\begin{align*}
\sup_{\substack{x,y \in \R\\ x \neq y}}  \frac{|f(x)-f(y)|}{|x^2-y^2|} &= \sup_{\substack{x,y \geqslant 0 \\ x \neq y}} \frac{|f(x)-f(y)|}{|x^2-y^2|} = \sup_{\substack{t,z \geqslant 0 \\ t \neq z}} \frac{|g(z)-g(t)|}{|z-t|} =\sup_{z \geqslant 0 } |g'(z)| \\
	&\leqslant \tfrac12 \max \left\{f''(0), \sup_{z>0} \tfrac{f'(\sqrt{z})} {\sqrt{z}} \right\} = \tfrac{1}{2} \|f'' \|_{L^\infty (\R)},
\end{align*}
thereby completing the proof of the lemma.
\end{proof}

In our  next result, we improve Proposition \ref{prop:uniform_bound and Holder regularity} by obtaining an estimate in terms of the Lipschitz regularity of the solutions. The asymptotic behavior of the solution $f$ from Proposition \ref{prop:uniform_bound and Holder regularity} will also be improved here. 
 
\begin{proposition}[Lipschitz regularity]\label{pro:Lipregularity}
Let $(c,f)$ be a solution of \eqref{main_EQ} with $f$ being a  continuous, even,  $L$-periodic function that is   non-decreasing on the half-period $(-\nicefrac L2,0)$. Assume further for some $\delta\in(0,1)$ that
\begin{equation*}
0<\delta \leqslant	f(x)\leqslant a^*(c), \quad \text{for all } x\in \mathbb T,
\end{equation*}
where the maximum is attained in the sense that  $f(0)=a^*(c)$. Then, $f$ enjoys the Lipschitz regularity
	\begin{equation*}
 \sup_{x \neq y} \left| \tfrac{f(x)-f(y)}{x-y}\right| \lesssim 1,
	\end{equation*}	
where the Lipschitz constant is independent of the solution $(c,f)$.
Moreover, it holds that 
\begin{equation*}
	a^*(c)-f(x)= \theta |x|+ \mathcal O (|x|^{2}),
\end{equation*}
as $|x|\ll 1$, where $\theta$ is defined in Proposition $\ref{prop:uniform_bound and Holder regularity}$. In addition, if $f$ is non-constant, then $\theta>0$.  
\end{proposition}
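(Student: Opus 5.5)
The plan is to combine the difference identity \eqref{equa:difference} with a sharper, half-period-monotone control of $\phi=\mathcal H^{-1}(f)$ near $x=0$. This replaces the crude Lipschitz bound on $\phi$ by a quadratic one, matching the quadratic degeneracy of $\mathcal G_c$ at $a^*(c)$. As in the proof of Proposition \ref{prop:uniform_bound and Holder regularity}, the condition $f(0)=a^*(c)$ together with Proposition \ref{prop:bound:c} forces $|c|\leqslant K_\delta$ (else $f\equiv 1$ and $a^*(c)=1$, which is excluded, or the statement is trivial). Hence $f\leqslant a^*(K_\delta)$, and every constant below depends only on $\delta$ and $p$.

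\emph{Step 1 (quadratic control of $\phi$).} By Proposition \ref{prop:elliptic}, $\|\phi\|_{C^2(\T)}\lesssim_\delta 1$ uniformly, and $\phi$ is even and $L$-periodic. Lemma \ref{lemma:W2infty}, applied to the periodic extension of $\phi$ on $\R$, gives
\begin{equation*}
|\phi(x)-\phi(y)|\leqslant \tfrac12\|\phi''\|_{L^\infty}\,|x^2-y^2| \lesssim_\delta |x-y|\,(|x|+|y|),
\end{equation*}
and the same bound holds near $\pm L/2$ after translating by $L/2$; note $\phi$ is also even about $-L/2$, since $f$ is. This is the only place the structure enters beyond regularity.

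\emph{Step 2 (Lipschitz bound).} Take $x,y\in[-L/2,0]$ with $x<y$, so $f(x)\leqslant f(y)\leqslant a^*(c)$ by monotonicity. Since $\mathcal G_c$ is convex and decreasing on $(0,a^*(c)]$ with $\mathcal G_c'(a^*(c))=0$ and $\mathcal G_c''\geqslant \kappa_\delta>0$ (see \eqref{infG''}), we have
\begin{equation*}
\mathcal G_c(f(x))-\mathcal G_c(f(y))\geqslant \tfrac{\kappa_\delta}{2}\,(f(y)-f(x))\,\big((f(y)-f(x))+2(a^*(c)-f(y))\big)\geqslant \tfrac{\kappa_\delta}{2}\,(f(y)-f(x))\,(a^*(c)-f(x)).
\end{equation*}
Applying Lemma \ref{lemma:MVT} between $f(0)=a^*(c)$ and $f(x)$, together with Step 1 at the pair $(x,0)$, gives $a^*(c)-f(x)\gtrsim$ something only in the wrong direction, so a lower bound must come differently. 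I would therefore split into two regimes. Near $0$, say $|x|\leqslant L/4$, combine $|\phi(x)-\phi(y)|\lesssim |x-y||x|$ with a lower bound $a^*(c)-f(x)\gtrsim |x|$. The latter follows from $\mathcal G_c(f(x))-\mathcal G_c(a^*)=\phi(0)-\phi(x)$ and the upper half of Lemma \ref{lemma:MVT}, provided $\phi(0)-\phi(x)\gtrsim x^2$, i.e.\ a uniform strict bound $-\phi''\geqslant \eta>0$ near $0$. Dividing then yields $|f(x)-f(y)|\lesssim|x-y|$. Away from $0$, $f\leqslant a^*(c)-\varepsilon_0$ uniformly, so $\mathcal G_c'(f)\leqslant -\varepsilon_1<0$, and \eqref{diff:main} gives $|f'|\leqslant |\phi'|/\varepsilon_1\lesssim 1$ directly (using Proposition \ref{prop:smoothness:1}). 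Evenness and periodicity then extend the bound to all of $\T$.

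\emph{Main obstacle.} The hard part is uniformity of the nondegeneracy, namely $-\phi''(0)=a^*(c)-\mathrm e^{-\mathcal G_c(a^*(c))}$ bounded below and $f(0)-f(x)$ not degenerate, independently of the solution. For the asymptotics of a single solution it suffices that $\theta>0$, which holds by Lemma \ref{lem:strict_monotonicity} when $f$ is non-constant ($\phi''(0)<0$). For the uniform constant, I would first try to avoid it: rewrite the identity as $(a^*-f(x))^2-(a^*-f(y))^2\approx \frac{2}{\mathcal G_c''}(\phi(x)-\phi(y))$, i.e.\ work with $g\bydef a^*(c)-f$ and show $g^2$ is an even function with $|g(x)^2-g(y)^2|\lesssim|x^2-y^2|$. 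Then monotonicity of $g$ on $[0,L/2]$ and a square-root argument give $|g(x)-g(y)|\leqslant \sqrt{|g(x)^2-g(y)^2|}$ only when they are far apart. So I would instead use the precise form $g(x)^2=\frac{-2(\phi(x)-\phi(0))}{\mathcal G_c''(\xi_x)}$, write $\phi(x)-\phi(0)=x^2\Phi(x)$ with $\Phi$ uniformly $C^1$ (by $\phi\in C^3$ away from issues, or $C^{2,1/2}$), and so obtain $g=|x|\sqrt{\Phi_1(x)}$. Lipschitz then reduces to $\sqrt{\Phi_1}$ being Lipschitz where $\Phi_1\gtrsim$ small, handled by the regime split above.

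\emph{Asymptotics.} Once Lipschitz holds, $\phi''$ is Lipschitz since $\phi''=\mathrm e^\phi-f$, so $\phi(x)-\phi(0)=\tfrac12\phi''(0)x^2+\mathcal O(|x|^3)$ and $\mathcal G_c''(\xi)=\mathcal G_c''(a^*)+\mathcal O(|x|)$. Redoing the final computation of Proposition \ref{prop:uniform_bound and Holder regularity} then yields $(a^*-f)^2=\theta^2x^2+\mathcal O(|x|^3)$, hence $a^*-f=\theta|x|+\mathcal O(x^2)$ when $\theta>0$, which holds by Lemma \ref{lem:strict_monotonicity}.
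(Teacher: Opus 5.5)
\paragraph{Comparison with the paper.} Your core chain is the paper's. You use the convexity bound $\mathcal G_c(f(x))-\mathcal G_c(f(y))\gtrsim (f(y)-f(x))(a^*(c)-f(x))$; the paper uses $a^*(c)-f(\tfrac{x+y}{2})$ in its place. You use the quadratic control $|\phi(x)-\phi(y)|\lesssim|x^2-y^2|$ from Lemma \ref{lemma:W2infty}. You then need a nondegeneracy bound $a^*(c)-f(z)\gtrsim|z|$, which comes from $\phi''(0)<0$ (Lemma \ref{lem:strict_monotonicity}) together with strict monotonicity away from $0$. Your derivation of the expansion $a^*(c)-f=\theta|x|+\mathcal O(x^2)$ is also the paper's. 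For a fixed solution this is correct.

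\paragraph{The gap.} It is the one you flag yourself. The statement asserts a Lipschitz constant independent of $(c,f)$. In your argument, $\eta$, the radius on which $-\phi''\geqslant\eta$, and $\varepsilon_0$ all depend on the solution. Nothing available bounds $-\phi''(0)=a^*(c)-\mathrm e^{-\mathcal G_c(a^*(c))}$ from below uniformly.

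Your proposed remedy does not close this, for three reasons. First, a uniform $C^1$ bound on $\Phi(x)=\int_0^1(1-t)\phi''(tx)\,\ud t$ would require $\phi''=\mathrm e^{\phi}-f$ to be Lipschitz, which is exactly the bound on $f$ you are trying to prove; Proposition \ref{prop:uniform_bound and Holder regularity} only gives $\Phi\in C^{1/2}$. Second, the mean-value point $\xi_x$ has no regularity in $x$. Third, the final reduction to ``$\sqrt{\Phi_1}$ Lipschitz where $\Phi_1$ is small, handled by the regime split'' reintroduces precisely the missing nondegeneracy.

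For what it is worth, the paper obtains its key inequality $|\tfrac{x+y}{2}|\lesssim a^*(c)-f(\tfrac{x+y}{2})$ from the expansion of Proposition \ref{prop:uniform_bound and Holder regularity} together with $\theta>0$. So the paper's proof has the same dependence on the solution.

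\paragraph{How to close it.} The missing idea is the one you were circling with $g^2$: a nonnegative function with bounded second derivative has a Lipschitz square root.

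Put $\psi\bydef\mathcal G_c(f)-\mathcal G_c(a^*(c))=-\phi-\mathcal G_c(a^*(c))$. It is nonnegative because $\mathcal G_c$ decreases on $(0,a^*(c)]$. It is $C^2$ with $|\psi''|=|\phi''|\leqslant M$, uniformly in the solution. From $0\leqslant\psi(x+h)\leqslant\psi(x)+h\psi'(x)+\tfrac M2h^2$, choosing $h=-\psi'(x)/M$ gives $|\psi'|^2\leqslant 2M\psi$. Hence $\sqrt\psi$ is $\sqrt{M/2}$-Lipschitz.

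Next, $\sqrt\psi=S(a^*(c)-f)$ with $S(w)\bydef\big(\mathcal G_c(a^*(c)-w)-\mathcal G_c(a^*(c))\big)^{1/2}$. The bounds $\kappa\leqslant\mathcal G_c''\leqslant M_2$ on $[\delta,a^*(c)]$ are uniform, since $|c|\leqslant K_\delta$. They give $S(w)\leqslant\sqrt{M_2/2}\,w$ and $(S^2)'(w)\geqslant\kappa w$, hence $S'\geqslant\kappa/\sqrt{2M_2}$.

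Consequently $|f(x)-f(y)|\leqslant\kappa^{-1}\sqrt{M_2M}\,|x-y|$ for all $x,y$, with a constant depending only on $\delta$ and $p$. No nondegeneracy, evenness or monotonicity is needed for this part. Your asymptotic step then goes through as written, with $\theta>0$ from Lemma \ref{lem:strict_monotonicity}.
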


\begin{proof}
First, using Proposition \ref{prop:uniform_bound and Holder regularity}, we have that $\|f\|_{C(\T)}$ has a uniform bound that does not depend on the solution $(c,f)$.
Now, since  $f$ is even and $L$-periodic, we may assume without loss of  generality that 
\begin{equation*}
	-\tfrac{L}{2} \leqslant x <y \leqslant 0, \qquad f(x) \neq f(y).
\end{equation*}
Then, we have that 
\begin{equation*}
	0< \delta \leqslant f(x) \leqslant f(\tfrac{x+y}{2}) \leqslant f(y) \leqslant a^*(c).
\end{equation*}
As $\mathcal{G}_c'$ is non-positive and increasing, we find that 
\begin{align*}
\left| \frac{\mathcal{G}_c(f(x)) - 	\mathcal{G}_c(f(y))}{f(x)-f(y)} \right| &=  \left| \int_0^1 \mathcal{G}_c'(t f(x)+(1-t)f(y))\, \ud t \right| 
\\
&\geqslant  \left| \int_0^1 \mathcal{G}_c'(t f(\tfrac{x+y}{2})+(1-t)a^*(c))\, \ud t \right|
\\
&=  \left| \int_0^1 \left( \mathcal{G}_c'(t f(\tfrac{x+y}{2})+(1-t)a^*(c)) - \mathcal{G}_c'(a^*(c))\right) \ud t \right|
\\
&\geqslant \inf_{z \in [\delta, a*(c)]} \mathcal{G}_c''(z) \left|  f(\tfrac{x+y}{2}) - a^*(c)\right| \int_0^1 t\, \ud t 
\\
&\gtrsim a^*(c) -f(\tfrac{x+y}{2}).
\end{align*}
Therefore, it follows that 
\begin{align*}
	\left| \frac{f(x)-f(y)}{x-y} \right| &=	\left| \frac{(f(x)-f(y))(\mathcal{G}_c(f(x)) - 	\mathcal{G}_c(f(y)))}{(\mathcal{G}_c(f(x)) - 	\mathcal{G}_c(f(y)))(x-y)} \right| \left| \frac{\tfrac{x+y}{2}-0}{\frac{x+y}{2}} \right| 
	\\
	&\lesssim \left| \frac{(\phi(x)-\phi(y)) (\frac{x+y}{2}-0)}{ (a^*(c) - f(\frac{x+y}{2}) )(x^2-y^2)} \right| \leqslant \left| \frac{\phi(x)-\phi(y)}{ x^2-y^2} \right|,
	\end{align*}
where we used in the last inequality Proposition \ref{prop:uniform_bound and Holder regularity} with Lemma \ref{lem:strict_monotonicity}.
Using now the fact that $\phi$ is even and belongs to $C^2(\T)$, the Lipschitz regularity follows from Lemma \ref{lemma:W2infty}.

It remains  to prove the asymptotic behavior. To that end, we can simply follow exactly the same lines of the last step of the proof of Proposition \ref{prop:uniform_bound and Holder regularity}, and by employing the Lipschitz regularity instead of the weaker H\"older regularity from Proposition \ref{prop:uniform_bound and Holder regularity}. 
Finally, using Proposition \ref{lem:strict_monotonicity}, we obtain $\phi''(0)=\partial_x^2 \mathcal{H}^{-1}(f)(0)<0$, which implies that the coefficient $\theta$ defined in Proposition \ref{prop:uniform_bound and Holder regularity} is strictly positive, i.e.,   $\theta>0$. This ends the proof of the proposition.
\end{proof}
In the next lemma, we establish a lower bound for the amplitude of the graph of the function $f$ that solves the equation \eqref{main_EQ} for some $c>0$. This will be useful later on in the global bifurcation analysis of \eqref{main_EQ}. More precisely, as will be emphasized in the proof of  Proposition  \ref{thm:GB},  the uniform bound (with respect to $c$) derived in the   next lemma will be crucial to distinguish the limiting profile at the end of the global branch of bifurcation from the  stationary solution. In particular, it prevents the branch from returning to the equilibrium configuration.

\begin{lemma}[Lower bound on the amplitude]\label{lem:amplitude}
Let $(c,f)$ be a solution of \eqref{main_EQ} with $f$ being  an even,  $L$-periodic function that is non-constant and  non-decreasing on the half-period $(-\nicefrac L2,0)$. Assume further that 
\begin{equation*}
	f(0) < a^*(c) \qquad \text{and} \qquad  a^*(c) \geqslant 1.
\end{equation*}
Then, it holds that     
\begin{equation*}
	a^*(c) - f(-L/2) \geqslant  C(p,L)>0,
\end{equation*}
where the constant $C(p,L)$ in the last inequality depends only on $p$ and the period $L$ and is independent on the function $f$ and the speed $c$. 
\end{lemma}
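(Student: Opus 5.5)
The plan is to argue by contradiction. Two preliminary facts come first.

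\emph{Fact 1: $m\bydef f(-L/2)\leqslant 1$.} Set $\phi=\mathcal H^{-1}(f)$. By Lemma~\ref{lem:strict_monotonicity}, $f$ attains its minimum at $-L/2$. If $m>1$, Proposition~\ref{prop:elliptic} gives $\phi\geqslant\log m>0$, so $\mathcal G_c(f)=-\phi<0$ everywhere. But $\mathcal G_c$ is decreasing on $(0,a^*(c)]$ with $\mathcal G_c(1)=0$ (here $a^*(c)\geqslant 1$ is used), so $\mathcal G_c(m)\geqslant\mathcal G_c(1)=0$ would force $m\leqslant 1$, a contradiction.

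\emph{Fact 2: $m\geqslant 2\delta_0$}, by Proposition~\ref{prop:delta:min}.

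Since $f(0)<a^*(c)$, $f$ takes values in $[2\delta_0,a^*(c))$. Proposition~\ref{prop:smoothness:1} then makes $f$ smooth, and Lemma~\ref{lem:strict_monotonicity} gives $f'>0$ and $\mathcal G_c'(f)<0$ on $(-L/2,0)$.

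Next I derive an identity for the odd function $\psi\bydef\phi'$, which vanishes at $0$ and $-L/2$. Differentiating $\mathcal G_c(f)=-\phi$ and $-\phi''+\mathrm e^\phi=f$ gives
\begin{equation*}
\psi=|\mathcal G_c'(f)|\left(-\psi''+\mathrm e^{\phi}\psi\right).
\end{equation*}
Dividing by $|\mathcal G_c'(f)|$, multiplying by $\psi$ and integrating over $\T$ yields
\begin{equation*}
\int_\T\frac{\psi^2}{|\mathcal G_c'(f)|}\,\ud x=\int_\T\left((\psi')^2+\mathrm e^\phi\psi^2\right)\ud x\geqslant\Big(\big(\tfrac{2\pi}{L}\big)^2+2\delta_0\Big)\int_\T\psi^2\,\ud x .
\end{equation*}
The inequality uses two ingredients. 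First, the Poincar\'e inequality for odd $L$-periodic functions (their Fourier series has no constant mode). Second, $\mathrm e^\phi\geqslant m\geqslant 2\delta_0$, from Proposition~\ref{prop:elliptic} and Fact 2.

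Since $\psi\not\equiv 0$ ($f$ is non-constant), there must be a point where $|\mathcal G_c'(f)|\leqslant\big((2\pi/L)^2+2\delta_0\big)^{-1}$. This is a quantitative version of the bifurcation condition $c_0^2=p'(1)+1/(1+(2\pi/L)^2)$.

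Now the contradiction argument. Suppose there is a sequence of solutions $(c_j,f_j)$ as in the lemma with $a^*(c_j)-m_j\to 0$. Since $m_j\leqslant 1\leqslant a^*(c_j)$, this forces:
\begin{itemize}
\item $a^*(c_j)\to 1$, hence $c_j^2\to p'(1)$;
\item $f_j\to 1$ uniformly;
\item $\phi_j\to 0$ in $C^2$, by the estimates of Proposition~\ref{prop:elliptic}.
\end{itemize}
On the range $[m_j,a^*(c_j)]$ we have $|\mathcal G_{c_j}'(\xi)|\leqslant\sup\mathcal G_{c_j}''\cdot(a^*(c_j)-\xi)\lesssim a^*(c_j)-m_j\to 0$. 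So $1/|\mathcal G_{c_j}'(f_j)|\to\infty$ uniformly. The identity above is consistent with this limit, so I need a matching \emph{upper} bound on $\int(\psi_j')^2/\int\psi_j^2$ to conclude.

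I would obtain it from the equation itself. Write $\psi'=\phi''=\mathrm e^{\phi}-f$, and $\psi''=\mathrm e^\phi\psi-f'$ with $f'=\psi/|\mathcal G_c'(f)|$. Multiplying $\psi''=\big(\mathrm e^\phi-1/|\mathcal G_c'(f)|\big)\psi$ by $\psi$ and integrating recovers the same identity. Hence a different test function is needed, for instance $\psi''$ or $f'$ itself. With it I would aim for
\begin{equation*}
\|\psi'\|_{L^2}^2\leqslant\Big(\sup\tfrac{1}{|\mathcal G_c'(f)|}\Big)\|\psi\|_{L^2}^2
\end{equation*}
together with a comparison showing $\sup\,1/|\mathcal G_c'(f)|$ and $\inf\,1/|\mathcal G_c'(f)|$ are comparable. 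The comparability should come from Fact 1: $f$ crosses $1$, and on $[m_j,a^*(c_j)]$ the ratio $|\mathcal G_c'(f)|/(a^*-f)$ is pinned near $\mathcal G_c''(1)>0$.

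The main obstacle is exactly this last step. Near the top, $a^*-f(0)$ can be much smaller than $a^*-m$, so $1/|\mathcal G_c'(f)|$ is very non-uniform and the eigenvalue-type identity degenerates. What I would try first is to normalise $\tilde\psi_j=\psi_j/\|\psi_j\|_{L^2}$ and rescale by $\varepsilon_j=a^*(c_j)-m_j$. Writing $f_j=a^*(c_j)-\varepsilon_j g_j$ with $g_j\in[0,1]$ and $g_j(-L/2)=1$, the equations become, in the limit, an ODE system for $(g,\tilde\psi)$ with $\tilde\psi=\mathcal G''(1)\,g\,\tilde\psi'$-type structure. I would then show this limit system has no nontrivial odd solution vanishing at $0$ and $-L/2$, which contradicts $\|\tilde\psi_j\|_{L^2}=1$.
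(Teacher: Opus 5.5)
There is a genuine gap: the argument never reaches its contradiction, and your Fact 1 is not established.

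\emph{Fact 1.} If $m>1$, then all values of $f$ lie in $(1,a^*(c))$. There $\mathcal G_c<0$, because $\mathcal G_c$ is decreasing with $\mathcal G_c(1)=0$. So $\mathcal G_c(f)=-\phi<0$ is exactly what one expects, and no contradiction arises; you have inverted the implication. Nothing in the lemma's hypotheses forces $f$ to cross $1$. The paper only knows $f(-L/2)\leqslant 1\leqslant f(0)$ along the branch, through the cone $\mathcal K$. Hence the reduction ``$m_j\leqslant1\leqslant a^*(c_j)$, so $a^*(c_j)\to1$ and $f_j\to1$'' is unsupported, and so is the comparability heuristic built on it.

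\emph{The missing contradiction.} Multiplying $-\psi''+\mathrm e^{\phi}\psi=f'$ by $\psi$ gives an identity whose inequality points the wrong way. It only forces $1/|\mathcal G_c'(f)|\geqslant(2\pi/L)^2+2\delta_0$ somewhere, which is compatible with $a^*(c_j)-m_j\to0$. Closing it would need a reverse Poincar\'e inequality, which is not available. The rescaled limit system is neither derived nor analyzed, so the proposal stops exactly where the proof has to happen.

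What is missing is a uniform lower bound on the growth of $\phi$ in terms of the growth of $f$ on the half period. The paper sets $h=\lambda\phi-\phi''$ with $\lambda\geqslant\mathrm e^{\sup\phi}$, so that $h'\geqslant f'$ on $(-L/2,0)$. The positivity of $G_\lambda(x-y)-G_\lambda(x+y)$ (Lemma~\ref{lem:G}) then gives $\phi(L_2)-\phi(L_1)\gtrsim f(L_2)-f(L_1)$ on a fixed subinterval $[L_1,L_2]\subset(-L/2,0)$. The estimate you already have, $\phi'=|\mathcal G_c'(f)|f'\leqslant|\mathcal G_c'(m)|f'\lesssim(a^*(c)-m)f'$, gives the reverse inequality with the factor $a^*(c)-m$. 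Combining the two yields the claim directly, with no contradiction argument and no Fact 1.

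In your own notation, the fix is to test $-\psi''+\mathrm e^{\phi}\psi=f'$ against $w(x)=-\sin(2\pi x/L)$, which is positive on $(-L/2,0)$, rather than against $\psi$. Since $\psi(0)=\psi(-L/2)=0=w(0)=w(-L/2)$, all boundary terms vanish and
\begin{equation*}
\int_{-L/2}^0\Big(\big(\tfrac{2\pi}{L}\big)^2+\mathrm e^{\phi}\Big)\psi\, w\,\ud x=\int_{-L/2}^0 f' w\,\ud x>0 .
\end{equation*}
Now use three bounds on $(-L/2,0)$: $\psi\geqslant0$; $\mathrm e^{\phi}\leqslant\max f\leqslant a^*(K_{\delta_0})$; and $\psi\leqslant C(p)\,(a^*(c)-m)f'$, which uses $|c|\leqslant K_{\delta_0}$ from Proposition~\ref{prop:bound:c}. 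Together they give $a^*(c)-m\geqslant\big[C(p)\big((2\pi/L)^2+a^*(K_{\delta_0})\big)\big]^{-1}$.
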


\begin{proof} 
Using Proposition \ref{prop:delta:min} and Proposition \ref{prop:uniform_bound and Holder regularity} we can find two  constants $\delta_0>0 $ and $C >0$, depending only on $p$, such that 
\begin{equation*}
	\delta_0\leqslant f(x) \leqslant C , \quad \text{for all } x\in \mathbb T.
\end{equation*} 
On the one hand, since $f$ is non-constant, then one finds,  in view of Proposition \ref{prop:bound:c}, that  $|c| \leqslant K_{ {\delta_0}}$. Accordingly, it holds that 
\begin{equation*}
 \mathcal G_c''(\xi)= 3 c^2 \xi^{-4}+p''(\xi) \leqslant 3 K_{{\delta_0}}^2 {\delta_0}^{-4}+ \max_{\xi \in [{\delta_0}, a^*(K_{{\delta_0}})]} |p''(\xi)|<\infty,
\end{equation*}
for any $\xi \in [{\delta_0}, a^*(K_{{\delta_0}})]$. This allows us to write, by virtue of Taylor formula, that
\begin{equation*}
	  \left|\mathcal G_c'(a^*(c)) -\mathcal G_c'(f(-L/2))\right| \lesssim _{ {\delta_0}}  |a^*(c)- f(-L/2)|.
\end{equation*} 
On the other hand, one has, by the  monotonicity of $ \mathcal G_c'$ and $f$, that  
	\begin{equation*}
	\mathcal G_c'(f(-L/2)) f'(x) \leqslant \mathcal G_c'(f(x)) f'(x) = -\phi'(x),
	\end{equation*}
	for any $x \in (-L/2,0)$. 
Therefore, using that  $\mathcal G_c'(a^*(c))=0$, and combining the preceding inequalities, we find that 
	\begin{equation*}
\left|a^*(c) -f(-L/2) \right| f'(x)	\gtrsim \left|\mathcal G_c'(a^*(c)) -\mathcal G_c'(f(-L/2))\right| f'(x) \geqslant \phi'(x).
	\end{equation*}
Since $f$ is non-constant, we can choose $L_1,L_2 \in (-\nicefrac L2 ,0)$ such that $L_1<L_2$ and $f(L_1)<f(L_2)$. Integrating the last inequality above over the integral  $[L_1,L_2]$ yields  that 
	\begin{equation}\label{a*-f}
\left(a^*(c) -f(-L/2) \right) \left( f(L_2)-f(L_1) \right) \gtrsim \phi(L_2)-\phi(L_1).
	\end{equation}
With this bound at hand, we now   define
\begin{equation*}
	\lambda \bydef C \geqslant  \exp(\sup_x \phi(x))  \qquad  \text{and} \qquad h \bydef \lambda \phi - \partial_x^2 \phi.
\end{equation*}	
Following \eqref{h'f'}, we can show that $h' \geqslant f'$ which eventually  leads to the inequality 
\begin{equation*}
	h(L_2)-h(L_1) \geqslant f(L_2)-f(L_1).
\end{equation*}
Next, using  Lemma \ref{lem:G}, we obtain that 
\begin{align*}
\phi(L_2)-\phi(L_1) 
&= \int_{L_1}^{L_2} \int_{-L/2}^{L/2} G_{\lambda}(x-y) h'(y)\, \ud y\, \ud x\\
&= \int_{L_1}^{L_2} \int_{-L/2}^{0} \left( G_{\lambda}(x-y) -G_{\lambda}(x+y)  \right) h'(y)\, \ud y \, \ud x\\
& \geqslant \int_{L_1}^{L_2} \int_{L_1}^{L_2} \left( G_{\lambda}(x-y) -G_{\lambda}(x+y)  \right) h'(y) \, \ud y \, \ud x\\
& \gtrsim_{L_1,L_2} h(L_2)-h(L_1)\\ 
& \gtrsim_{L_1,L_2} f(L_2)-f(L_1).
\end{align*}
Summing the last inequality with \eqref{a*-f} and using that $f(L_1)<f(L_2)$, we arrive at the final lower bound 
\begin{equation*}
	a^*(c) - f(-L/2) \gtrsim 1.
\end{equation*}
This completes the proof of the lemma.
\end{proof}

\section{Local Structure of traveling waves}\label{section:local:theory}

This section constitutes the first step in constructing the curve of traveling-wave solutions of \eqref{EP}. We first apply local bifurcation techniques to establish the small-amplitude solutions of \eqref{main_EQ}. We then analyze additional properties of these solutions, which particularly will  come in handy in the analysis of the global continuation of the bifurcation branch in Section \ref{sec:global}

\subsection{Local Bifurcation}\label{sec:localbif}

We introduce the functional  
\begin{equation}\label{nonlinear_problem_2} 
 	\mathcal{F}(c,f)\bydef \mathcal{G}_c(f)+ \mathcal H^{-1}(f) = \tfrac{c^2}{2} \left (\tfrac{1}{f^2} -1\right) + p(f)-p(1) + \mathcal H^{-1}(f) ,
 \end{equation}   
 and  aim to establish a local bifurcation result for the equation 
 \begin{equation*}
 	\mathcal F(c,f)=0,
 \end{equation*}
  in the vicinity of the trivial solution 
  \begin{equation*}
  	(c,f) = (c,1), \quad  c\in \mathbb{R}.
  \end{equation*}
  The local bifurcation theorem, below,    will be stated and proved in the Banach space   
	\begin{equation*}
			X\bydef C_{\mathrm{even}}(\T) = \left \{ f\in C(\mathbb T): f   = \sum_{n \in \mathbb N}h_n \cos \left ( \tfrac {2\pi n }{L} \cdot \right) \right\},
	\end{equation*}
	endowed with the $L^\infty$ norm. However, it is to be emphasized   that the profile $f$ enjoys, in fact, much better regularity properties, which will be discussed with details later on.

\begin{theorem}[Local Bifurcation]\label{thm:1}
There is $\varepsilon >0$, $c_0\in \mathbb{R}$ and  two nontrivial analytic   functions 
\begin{equation*}
	\big(\Psi, \Phi \big) : (-\varepsilon,\varepsilon) \to  \mathbb{R}\times X
\end{equation*}
	such that 
	\begin{equation*}
		\big( \Psi(0) , \Phi (0)\big)  = \left (\sqrt{p'(1) +  \tfrac{1}{1+(\nicefrac {2\pi}{L})^2}},1\right)  
	\end{equation*}
	and 
	\begin{equation*}
		\mathcal{F}\big( \Psi(s) , \Phi (s)\big) = 0, \quad \text{for all } s\in (-\varepsilon,\varepsilon).
	\end{equation*}
	More precisely, there exists an open set $U_0 \subset \mathbb R\times X$, with $(c_0,1) \in U_0$, such that 
	\begin{equation*}
		\left\{ (c,f)\in U_0: \mathcal{F}(c,f)=0, \text{ with }f\not\equiv 1  \right\} = \left\{  \big( \Psi(s) , \Phi (s)\big) , 0< |s| < \varepsilon \right\} ,
	\end{equation*}
	 with  the asymptotic expansions (in the space $X$)
	\begin{equation*}
		\Phi (s)= 1 + s \cos \left( \tfrac{2\pi}{L} \cdot \right) + \mathcal O(s^2),
	\end{equation*}  
	 and  
	\begin{equation*}
		\Psi(s)=  \sqrt{p'(1) +  \tfrac{1}{1+(\nicefrac {2\pi}{L})^2}} +  \Psi'(0) s + \tfrac12 \Psi''(0) s^2 + \mathcal O(s^3),
	\end{equation*}
holding for any $s\in (-\varepsilon,\varepsilon)$, where $\Psi'(0)$ and $\Psi''(0)$ are analyzed in Proposition $\ref{prop:Psi},$ below. 
\end{theorem}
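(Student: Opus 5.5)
The proof applies the Crandall--Rabinowitz theorem (analytic version, recalled in the appendix) to $\mathcal F:\mathbb R\times X\supset U\to X$, defined on $U=\{(c,f): \min f>0\}$.

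\emph{Step 1: regularity.} We first check that $\mathcal F$ is well defined and real analytic. The local part $f\mapsto \frac{c^2}{2}(f^{-2}-1)+p(f)-p(1)$ is a superposition operator by analytic functions on $(0,\infty)$, so it is analytic on $X$. For the nonlocal part, Proposition \ref{prop:elliptic} gives $\mathcal H^{-1}:\{f>0\}\cap X\to C^2_{\mathrm{even}}(\T)$. To get analyticity we apply the analytic implicit function theorem to $(\phi,f)\mapsto -\phi''+\mathrm e^\phi-f$, viewed as a map $C^2_{\mathrm{even}}\times X\to X$. Its $\phi$-derivative is $-\partial_x^2+\mathrm e^{\phi}$, which is invertible $C^2\to C$ because the coefficient is positive (the same Lax--Milgram/maximum principle argument as in Step~2 of that proposition). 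Uniqueness from Step~1 identifies the local implicit solution with $\mathcal H^{-1}$. Evenness is preserved because $\mathcal H$ commutes with $x\mapsto -x$. Finally, $\mathcal F(c,1)=0$ for all $c$, since $\mathcal H^{-1}(1)=0$.

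\emph{Step 2: linearization.} Differentiating $\mathcal H(\phi)=f$ at $\phi=0$ gives $D\mathcal H^{-1}(1)=(1-\partial_x^2)^{-1}$. Hence
\begin{equation*}
\partial_f\mathcal F(c,1)h=\bigl(-c^2+p'(1)\bigr)h+(1-\partial_x^2)^{-1}h,
\end{equation*}
a Fourier multiplier on the cosine basis with symbol
\begin{equation*}
m_n(c)=p'(1)-c^2+\frac{1}{1+(2\pi n/L)^2}.
\end{equation*}
The kernel at $c=c_0$ with $c_0^2=p'(1)+\frac1{1+(2\pi/L)^2}$ is spanned by $\cos(\frac{2\pi}{L}\cdot)$, and it is one dimensional because $m_n(c_0)=\frac{1}{1+(2\pi n/L)^2}-\frac{1}{1+(2\pi/L)^2}\neq0$ for $n\neq1$ (the symbol is strictly monotone in $n$).

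\emph{Step 3: Fredholm property and range.} This is the main technical point, since we work in $X=C_{\mathrm{even}}$ rather than a Hilbert space. Write $\partial_f\mathcal F(c_0,1)=(p'(1)-c_0^2)\mathrm{Id}+K$, where $K=(1-\partial_x^2)^{-1}$ is compact on $X$ (it maps into $C^2$). The coefficient $p'(1)-c_0^2=-\frac{1}{1+(2\pi/L)^2}$ is nonzero, so the operator is Fredholm of index $0$. Its range is closed of codimension one. It is exactly the set of $g$ with $\int g\cos(\frac{2\pi}{L}x)\,\ud x=0$: this follows by solving mode by mode, using that the symbols $m_n(c_0)$ stay bounded away from zero for $n\ne1$ and converge to $p'(1)-c_0^2\ne0$, together with the compact-perturbation structure, which guarantees the solution is continuous.

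\emph{Step 4: transversality and conclusion.} We compute
\begin{equation*}
\partial_c\partial_f\mathcal F(c_0,1)\cos\bigl(\tfrac{2\pi}{L}\cdot\bigr)=-2c_0\cos\bigl(\tfrac{2\pi}{L}\cdot\bigr),
\end{equation*}
which is not in the range since $c_0\ne0$. Crandall--Rabinowitz then yields analytic $(\Psi,\Phi)$ with $\Phi(s)=1+s\cos(\frac{2\pi}{L}\cdot)+\mathcal O(s^2)$, and a neighborhood $U_0$ in which the nontrivial zeros are exactly this curve. The expansion of $\Psi$ follows from its analyticity; the coefficients $\Psi'(0)$ and $\Psi''(0)$ are deferred to Proposition \ref{prop:Psi}.
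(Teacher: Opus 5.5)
Your proposal is correct and follows essentially the same route as the paper. Both arguments establish analyticity of $\mathcal H^{-1}$ via the analytic inverse/implicit function theorem together with the uniqueness from Proposition~\ref{prop:elliptic}, compute the kernel mode by mode in cosines, identify the range as the annihilator of $\cos(\tfrac{2\pi}{L}\cdot)$, check transversality through $-2c_0\cos(\tfrac{2\pi}{L}\cdot)$, and conclude with Crandall--Rabinowitz. The one difference is cosmetic and lies in the range step: you invoke the compact-perturbation (Fredholm index zero) structure, while the paper writes the mode-by-mode solution explicitly as a continuous piece plus an $H^2$ piece, which is the same mechanism made concrete.
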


\begin{remark}
It is important to emphasize that the regularity of the profiles $\Psi$ and $ \Phi$ is tied to the  regularity of $p$. Here, we state Theorem \ref{thm:1} under the analyticity assumption on $p$ as this will be required later in order to apply global bifurcation results in the analytic setting. However, local bifurcation branches can be constructed under much weaker regularity assumptions on $p$. In particular, $p \in C^k$ with $k \ge 2$ suffices for the local bifurcation theory.
	\end{remark}
Before turning to the proof of Theorem \ref{thm:1}, we complete its statement with the following proposition.
\begin{proposition}\label{prop:Psi}
	Under the conditions of Theorem $\ref{thm:1},$ we have 
	\begin{equation*}
		\Psi'(0)=0.
	\end{equation*}
		Moreover, for a fixed pressure $P$, there exists a set $\mathcal N \subset (0,\infty)$ of cardinal at most $8$, such that, for any $L \in (0,\infty) \setminus \mathcal N$, we have  
		\begin{equation*}
		\Psi''(0)\neq 0.
	\end{equation*}
\end{proposition}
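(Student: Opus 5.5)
The plan is a Lyapunov--Schmidt computation for the Crandall--Rabinowitz branch, writing $f=1+g$ with $g$ even. We expand $\mathcal F$ to third order in $g$. Two ingredients are needed.

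The local part is $\mathcal G_c(1+g)=\mathcal G_c'(1)g+\tfrac12\mathcal G_c''(1)g^2+\tfrac16\mathcal G_c'''(1)g^3+\dots$, where
\[
\mathcal G_c'(1)=p'(1)-c^2,\qquad \mathcal G_c''(1)=3c^2+p''(1),\qquad \mathcal G_c'''(1)=-12c^2+p'''(1).
\]

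For the nonlocal part, write $\phi=\mathcal H^{-1}(1+g)=\phi_1+\phi_2+\phi_3+\dots$ with $\phi_k$ homogeneous of degree $k$ in $g$. Expanding $\mathrm{e}^\phi$ gives
\[
\phi_1=\mathcal L_1^{-1}g,\qquad \phi_2=-\tfrac12\mathcal L_1^{-1}(\phi_1^2),\qquad \phi_3=-\mathcal L_1^{-1}\big(\phi_1\phi_2+\tfrac16\phi_1^3\big),
\]
where $\mathcal L_1=\mathrm{Id}-\partial_x^2$.

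Let $k=2\pi/L$ and $e_n=\cos(nkx)$, so that $\mathcal L_1^{-1}e_n=e_n/(1+n^2k^2)$. The linearization is then
\[
D_f\mathcal F(c,1)=(p'(1)-c^2)+\mathcal L_1^{-1},
\]
with kernel spanned by $e_1$ exactly at $c_0^2=p'(1)+\tfrac1{1+k^2}$. We take this as given from Theorem \ref{thm:1}.

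Next, set $\Phi(s)=1+se_1+s^2w_2+s^3w_3+\dots$ with $w_j\perp e_1$, and $c(s)^2=c_0^2+\mu_1s+\mu_2s^2+\dots$. Project the equation $\mathcal F=0$ onto $e_1$ and onto its complement, order by order.

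At order $s^2$, the $e_1$-projection contains $\tfrac12\mathcal G''(1)e_1^2$ and $-\tfrac12\mathcal L_1^{-1}\big((\mathcal L_1^{-1}e_1)^2\big)$. Both are combinations of $e_0$ and $e_2$ only, since $e_1^2=\tfrac12(1+e_2)$, and so they are orthogonal to $e_1$. The remaining term is $-\mu_1e_1$, so $\mu_1=0$. Since $\Psi(s)=\sqrt{c(s)^2}$ and $\Psi(0)>0$, this gives $\Psi'(0)=0$. Equivalently, the symmetry $s\mapsto-s$ corresponds to the translation $x\mapsto x+L/2$, which makes $\Psi$ even in $s$; this gives a cleaner argument.

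The complementary projection at order $s^2$ determines $w_2=\alpha_0+\alpha_2e_2$. Dividing by the invertible symbols $p'(1)-c_0^2+\tfrac1{1+n^2k^2}$ for $n=0,2$ gives explicit $\alpha_0,\alpha_2$ as rational functions of $k^2$ and of $p'(1),p''(1)$. These symbols are nonzero because $1/(1+n^2k^2)$ is strictly decreasing in $n$.

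At order $s^3$, the $e_1$-coefficient yields
\[
\tfrac12\mu_2=\text{(cubic terms)}+\text{(interaction of $e_1$ with $w_2$)}.
\]
The contributions are:
\begin{itemize}
\item $\mathcal G''(1)\,e_1w_2$;
\item $\tfrac16\mathcal G'''(1)\,e_1^3$;
\item the nonlocal terms $-\mathcal L_1^{-1}(\phi_1\phi_1')$ with $\phi_1'=\mathcal L_1^{-1}w_2$;
\item $-\mathcal L_1^{-1}(\phi_1\phi_2+\tfrac16\phi_1^3)$.
\end{itemize}
Each term is computed from the Fourier multipliers $1/(1+n^2k^2)$ at $n=0,1,2$.

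After clearing denominators, and using $c_0^2=p'(1)+1/(1+k^2)$, one obtains
\[
\Psi''(0)=\frac{\mu_2}{2c_0}=\frac{Q(k^2)}{R(k^2)},
\]
where $R>0$ and $Q$ is a polynomial in $z=k^2$ whose coefficients depend on $p'(1),p''(1),p'''(1)$. This holds because the lower-order Fourier symbols are rational in $z$.

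The main obstacle is showing that $Q$ is not identically zero and bounding its degree by $8$. Then $\mathcal N=\{2\pi/\sqrt z: Q(z)=0,\ z>0\}$ has at most $8$ elements. To rule out $Q\equiv0$, I would first look at the leading coefficient, or at the limit $z\to\infty$. In that limit the multipliers $1/(1+n^2z)\to0$ for $n\ge1$, and $Q$ should reduce to a nonzero multiple of a quantity like
\[
\tfrac12\mathcal G''(1)^2\Big/(\cdots)+\tfrac18\mathcal G'''(1).
\]
If that leading quantity can vanish for some $p$, I would fall back to the behaviour as $z\to0$, where the multipliers equal $1$. Failing that, I would find a coefficient of $Q$ that is a sum of positive terms, using only $p'(1)>0$. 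The degree count comes from tracking the denominators $(1+z)$, $(1+4z)$ and the two resonance denominators $p'(1)-c_0^2+\tfrac1{1+n^2z}$ for $n=0,2$, each of which is linear in $z$ after clearing.
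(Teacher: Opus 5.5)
There is a genuine gap in the second claim. The first part is fine: your expansions of $\mathcal G_c$ and $\mathcal H^{-1}$ are right, and the order-$s^2$ projection is exactly the paper's observation that $\ud^2_{ff}\mathcal F(c_0,1)(\xi_0,\xi_0)$ has only modes $0$ and $2$. The paper then plugs this into Kielh\"ofer's formulas, so $\Psi'(0)=0$ is proved. Your translation-by-$L/2$ argument, which shows $\Psi$ is even, is a nice alternative.

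The second claim is the real content of the statement, and you never evaluate $\mu_2$. So neither $Q\not\equiv 0$ nor the degree bound is established. Your heuristics for the two limits are also not what happens. Set $z=(2\pi/L)^2$ and $\beta_n=\frac{1}{1+n^2z}$. The resonance gaps are $1-\beta_1=\frac{z}{1+z}$ and $\beta_1-\beta_2=\frac{3z}{(1+z)(1+4z)}$, and both degenerate at the ends. In particular, the multipliers do not simply drop out as $z\to\infty$, and $Q$ does not reduce to a finite combination of $\mathcal G''(1)^2$ and $\mathcal G'''(1)$.

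Carrying out your scheme closes the gap. Use the paper's constants $\mathscr A=\tfrac12(\mathcal G_{c_0}''(1)-\beta_1^2)$ and $\mathscr B=\tfrac12(\mathcal G_{c_0}''(1)-\beta_1^2\beta_2)$. Then
\begin{equation*}
w_2=-\frac{\mathscr A}{2(1-\beta_1)}+\frac{\mathscr B}{2(\beta_1-\beta_2)}\,e_2,\qquad
\mu_2=\frac{\mathcal G_{c_0}'''(1)+\beta_1^4(1+\beta_2)}{8}-\frac{\mathscr A^2}{1-\beta_1}+\frac{\mathscr B^2}{2(\beta_1-\beta_2)}.
\end{equation*}
As $z\to\infty$, the small divisor $\beta_1-\beta_2\sim\frac{3}{4z}$ makes the last term dominate, so $\mu_2\sim\frac z6\,(3p'(1)+p''(1))^2$. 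As $z\to0^+$, one gets $\mu_2\sim-\frac{5}{24z}(3p'(1)+p''(1)+2)^2$.

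Equivalently, $24z(1+z)^4(1+4z)\mu_2$ is a polynomial of degree $7$. Its leading coefficient is $16(3p'(1)+p''(1))^2$ and its constant term is $-5(3p'(1)+p''(1)+2)^2$; neither involves $p'''(1)$. The hypothesis $3p'(1)+p''(1)>0$ from \eqref{p_conditions1} (not merely $p'(1)>0$) then gives $Q\not\equiv 0$ and at most $7$ exceptional periods. That is the missing step.

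Do not try to match the paper's coefficients $a_0,\dots,a_8$. In its display for $\mathfrak L^{-1}(\mathrm{Id}-\mathcal P)\ud^2_{ff}\mathcal F(c_0,1)(\xi_0,\xi_0)$, the multipliers $1-\frac{1}{1+\alpha^2}$ and $\frac{1}{1+4\alpha^2}-\frac{1}{1+\alpha^2}$ are those of $\mathfrak L$, not of $\mathfrak L^{-1}$. That slip is what produces degree $8$ and a $p'''(1)$ term in every coefficient. The statement itself survives. However, the corrected polynomial is negative at $0$ and positive at infinity, so $\mathcal N$ is in fact never empty, contrary to the paper's remark for $p=\kappa\rho^2$. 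Minor point: $\Psi''(0)=\mu_2/c_0$, not $\mu_2/(2c_0)$.
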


\begin{proof}[Proof of Theorem $\ref{thm:1}$] 
Our argument is based on applying the analytic version of Crandall--Rabinowitz's theorem to construct the local branch of bifurcation. 
For clarity we split the proof into several steps.

\paragraph{\underline{\em Setup}}  First, we begin by noticing that  
\begin{equation*}
	\mathcal F(c,1)=0, \quad \text{for all } c\in \mathbb R,
\end{equation*}
	that is $f=1$ is a trivial solution for any speed $c\in \mathbb R$. 
\\	
	Next, we consider the open subset 
\begin{equation*}
	U_{\delta } \bydef  \{ (c, f)\in \mathbb R\times  X: \   \delta<  f (x) < \beta (c) \}    ,
\end{equation*}
for a fixed $\delta\in (0,1)$, and a continuous function $c\mapsto \beta (c)> 1$, and we  
 claim that $\mathcal F$ is well defined as a mapping  
\begin{equation}\label{Fmap}
	\mathcal F:  U_{\delta} \to  \big( C(\mathbb T), \norm \cdot_{L^\infty} \big) .
	\end{equation}
	  To that end, we notice, on the one hand, that the local component of $\mathcal F$, that is 
\begin{equation*}
	f\mapsto  \frac{c^2}{2} \left (\frac{1}{f^2} -1\right) + p(f)-p(1), 
\end{equation*}
  maps positive, even continuous functions into $C(\mathbb T)$. 
\\
On the other hand, in view of Proposition \ref{prop:elliptic},   the nonlocal component of $\mathcal F$, that is the operator $\mathcal H^{-1}$,   maps the space of positive, even continuous functions into $C(\mathbb T)$.     This establishes that $\mathcal F$ 
is well defined as previously claimed.

\paragraph{\underline{\em Regularity}} Since $p$ is locally analytic on $(0,\infty)$, then the local component of $\mathcal F$ is locally analytic from $U_\delta$ to $C(\T)$. Consider now the operator $\mathcal{H} : C^2(\T) \to C(\T)$ and let $\phi_0 \in C^2(\T)$ be fixed. Then, it is readily seen that  
\begin{equation*}
	\ud_\phi \mathcal{H}(\phi_0) =(\mathrm{e}^{\phi_0} - \partial_x^2) : C^2(\T) \to C(\T),
\end{equation*} 
is an  isomorphism.   Indeed, one can show, at first, that this map is an isomorphism from $H^2(\mathbb T)$ to $L^2(\mathbb T)$, proving in particular that, for any $v\in C(\mathbb T)\subset L^2(\mathbb T)$,   there exists  $b$   in $H^2(\mathbb T)$ satisfying  
\begin{equation*}
	-\partial_{xx} b+\mathrm  e^{\phi_0}b  = v.
\end{equation*}
Then, one can further show, by reordering the terms in the preceding equation, that $b$ actually belongs  to  $C^2(\mathbb T)$.

 By Proposition \ref{prop:elliptic}, the analytic inverse function theorem (see Theorem 4.5.3 in \cite{BJ03})   and from  the uniqueness of the inverse, we deduce that 
\begin{equation*}
	\mathcal{H}^{-1} : U_\delta \to C^2(\T)
\end{equation*}
is locally analytic. Therefore, the map \eqref{Fmap} is locally analytic.

\paragraph{\underline{\em  Linearized operator: Unidimensional Kernel}}
 Next, we show that the linearized operator has a one-dimensional kernel, i.e.,
 \begin{equation*}
 	\ker \big( \ud_f \mathcal F (c,1) \big) = \langle g\rangle, 
 \end{equation*}
	for some $(c,g)\in \mathbb R \times  X.$ To that end, computing that 
	\begin{equation*}
		\ud_f \mathcal F (c,1)h=  (p'(1) -c^2) h +  (\id - \partial_x^2 )^{-1} h,
	\end{equation*}
	and incorporating the expansion  
	\begin{equation*}
		h(x)= \sum_{n\in \mathbb N} h_n \cos \left ( \tfrac {2\pi n x}{L}  \right) 
	\end{equation*}
	yields that
	\begin{equation*}
		\left (p'(1) - c^2 + \frac{1}{1+(\nicefrac {2\pi n}{L})^2}\right) h_n=0, \quad \text{for all } n\in \mathbb N,
	\end{equation*}
	as soon as $h\in \ker\big(\ud_f \mathcal F (c,1)\big)$. Hence, $h$ would be nontrivial in  the kernel if the dispersion relation 
	\begin{equation*}
		p'(1) - c^2 + \tfrac{1}{1+(\nicefrac {2\pi m}{L})^2}=0
	\end{equation*}
	holds for some $m\in \mathbb N$. Since the period $L>0$ is arbitrary, we take here $m=1$. This can be recast as   
	\begin{equation}\label{c0def}
		c= c_{0}\bydef  \sqrt{p'(1) +  \frac{1}{1+(\nicefrac {2\pi}{L})^2}}\cdot
	\end{equation}
	This formula gives an expression of admissible values of  speeds at which the linearized  operator $\ud_f \mathcal F (c_{0},1) $ has exactly one nontrivial generator of its kernel, which is, modulo a multiplicative constant, given by
	\begin{equation*}
		g(x)= \cos \left ( \tfrac {2\pi x}{L} \right), \quad \text{for all } x\in \mathbb T.
	\end{equation*}

\paragraph{\underline{\em Linearized operator: One co-dimensional Range}} 
	
	It is readily seen, for any $h\in X$, that 
	\begin{equation*}
		\ud_f \mathcal F (c_{0},1) h(\cdot)= \sum_{n\in \mathbb N} \left (  \tfrac{1}{1+(\nicefrac {2\pi n}{L})^2}-\tfrac{1}{1+(\nicefrac {2\pi}{L})^2}\right) h_n \cos \left ( \tfrac {2\pi n \cdot}{L} \right) .
	\end{equation*}
	It then follows that 
	\begin{equation*}
		\text{Range}\big( \ud_f \mathcal F (c_{0},1)\big) \subset A \bydef \left \{ g\in X : \int_{\mathbb T}  g(x)\cos  \left ( \tfrac {2\pi x}{L} \right) \ud x=0  \right\}.
	\end{equation*}
		Conversely, take any  continuous  function $g \in A$ with the expansion 
	\begin{equation*}
		g(\cdot )= \sum_{n\in \mathbb N \setminus \{1\}} g_n \cos\left ( \tfrac {2\pi n \cdot}{L} \right),
	\end{equation*}
	for some  real sequence of  coefficients $(g_n)_{n\in \mathbb N\setminus \{1\}}$, and set 
	\begin{equation*}
		h_n \bydef \frac{g_n}{\left (  \frac{1}{1+(\nicefrac {2\pi n}{L})^2}-\frac{1}{1+(\nicefrac {2\pi}{L})^2}\right)}  , \quad \text{for } n\neq 1,
	\end{equation*}
	with $h_{1}$ being free to take any real value. Accordingly, we set 	\begin{equation*}
		h(x)\bydef \sum_{n\in \mathbb N} h_n \cos\left ( \tfrac {2\pi n x}{L} \right), \quad \text{for any } x\in \mathbb T,
	\end{equation*}
	and we claim that $h\in C(\mathbb T)$. To that end, we write that 
	\begin{equation*}
		\begin{aligned}
			h(x)
			&= h_1\cos \left ( \tfrac {2\pi x}{L} \right)+  \sum_{n\in \mathbb N\setminus\{1\}} (h_n+ (1+(\nicefrac {2\pi}{L})^2)g_n ) \cos\left ( \tfrac {2\pi n x}{L} \right) \\
			&\quad - (1+(\nicefrac {2\pi}{L})^2)\sum_{n\in \mathbb N\setminus\{1\}}  g_n  \cos\left ( \tfrac {2\pi n x}{L} \right) .
		\end{aligned}
	\end{equation*}
	It is then readily seen that the first and the last functions are  continuous. As for the second function, it can further be recast as 
	\begin{equation*}
		\begin{aligned}
			 &\sum_{n\in \mathbb N  \setminus\{1\}} (h_n+ (1+(\nicefrac {2\pi}{L})^2)g_n ) \cos\left ( \frac {2\pi n x}{L} \right)
			 \\
			 &\qquad = (1+(\nicefrac {2\pi}{L})^2)  \sum_{n\in \mathbb N \setminus\{1\}}  \frac{1}{1+(\nicefrac {2\pi n}{L})^2} \left( \frac{g_n}{\frac{1}{1+(\nicefrac {2\pi n}{L})^2}-\frac{1}{1+(\nicefrac {2\pi}{L})^2}}\right)  \cos\left ( \frac {2\pi n x}{L} \right).
		\end{aligned}
	\end{equation*} 
	It is then readily seen that the function above belongs to $H^2(\mathbb T)$ as a consequence of the $\ell^2$-boundedness of the sequence 
	\begin{equation*}
	\left( \frac{(1+n^2)g_n}{\left(1+(\nicefrac {2\pi n}{L})^2\right) \left( \frac{1}{1+(\nicefrac {2\pi n}{L})^2}-\frac{1}{1+(\nicefrac {2\pi}{L})^2}\right)} \right)_{n\in \mathbb N \setminus\{1\}} ,
	\end{equation*}
	which in turn follows from the fact that $g\in C(\mathbb T)\subset L^2(\mathbb T)$.
	Hence, we deduce, by Sobolev embedding, that 
	\begin{equation*}
		x\mapsto  \sum_{n\in \mathbb N} (h_n+ (1+(\nicefrac {2\pi}{L})^2)g_n ) \cos\left ( \tfrac {2\pi n x}{L} \right) \in C(\mathbb T),
	\end{equation*}
	as well. This shows that  $h$ is a continuous function satisfying 
	\begin{equation*}
		\ud_f\mathcal F (c_{0},1) h=g,
	\end{equation*} 
	 whence establishing  that 
	\begin{equation*}
		\text{Range}\big( \ud_f \mathcal F (c_{0},1)\big) = A.
	\end{equation*}
	Consequently, it follows  that $\text{Range}\big( \ud_f \mathcal F (c_{0},1)\big)  $ is of a co-dimension one.

\paragraph{\underline{\em Transversality}} As for the transversality condition, computing that 
	\begin{equation*}
		\partial_c \ud_f \mathcal F (c_{0},1)h= -2c_{0} h,
	\end{equation*}
	for any $h\in X$, it then immediately follows that 
	\begin{equation*}
		\partial_c \ud_f \mathcal F (c_{0},1) \cos \left ( \tfrac {2\pi  \cdot }{L} \right)= -2c_{0} \cos \left ( \tfrac {2\pi  \cdot }{L} \right) \notin A=\text{Range}\big( \ud_f \mathcal F (c_{0},1)\big).
	\end{equation*}
	
	All in all, given the results of our analysis above, we have shown that all the requirements in Crandall--Rabinowitz's theorem are fulfilled, which permits us to apply it and conclude the proof.  
	\end{proof}

\begin{proof}[Proof of Proposition $\ref{prop:Psi}$] 
In order to compute $\Psi'(0)$ and $\Psi''(0)$, we recall that $\mathcal F$ maps $X$ into itself, where 
\begin{equation*}
	X= C_\mathrm{even}(\T).
\end{equation*}
We recall also that 
	\begin{equation*}
	\text{Ker}\big( \ud_f \mathcal F (c_{0},1)\big)	=  \langle \cos (\tfrac{2 \pi}{L} \cdot) \rangle,
	\end{equation*}	
and 
	\begin{equation*}
		\text{Range}\big( \ud_f \mathcal F (c_{0},1)\big) =	A \bydef \left \{ g\in X : \int_{\mathbb T}  g(x)\cos  \left ( \tfrac {2\pi x}{L} \right) \ud x=0  \right\} .
	\end{equation*}
	It is then readily seen that 
	\begin{equation*}
	X= \text{Range}\big( \ud_f \mathcal F (c_{0},1)\big)  \oplus \text{Ker}\big( \ud_f \mathcal F (c_{0},1)\big) =  A \oplus \langle \cos (\tfrac{2 \pi}{L} \cdot) \rangle .
	\end{equation*}
Let $\mathcal P$ be the canonical projection from $X$ into $\langle \xi_0 \rangle$, where 
\begin{equation*}
\xi_0 \bydef \Phi'(0)= \cos (\tfrac{2 \pi}{L} \cdot ).
\end{equation*}
Using  the formulas (I.6.3) and (I.6.11) from \cite{K04156}, we obtain that 
\begin{equation*}
	\Psi'(0)=-\frac{1}{2} \frac{\langle \ud^2_{ff} \mathcal F (c_{0},1) (\xi_0,\xi_0), \xi_0 \rangle}{\langle \ud^2_{c f} (c_0,1) \xi_0,\xi_0 \rangle}.
\end{equation*}
Moreover,  if $\Psi'(0)=0$, then we have tht 
\begin{align}\nonumber
	\Psi''(0)
	&=-\frac{1}{3} \frac{\langle \mathcal{P}\, \ud^3_{fff} \mathcal F (c_{0},1) (\xi_0,\xi_0,\xi_0), \xi_0 \rangle}{\langle \ud^2_{c f} (c_0,1) \xi_0,\xi_0 \rangle}\\ \label{Psi''}
	 &\quad + \frac{\langle \mathcal{P}\, \ud^2_{ff} \mathcal F (c_{0},1) (\xi_0, (\ud_{f} \mathcal F (c_{0},1))^{-1} (\mathrm{Id}-\mathcal{P})\, \ud^2_{ff} \mathcal F (c_{0},1)(\xi_0,\xi_0)), \xi_0 \rangle}{\langle \ud^2_{c f} (c_0,1) \xi_0,\xi_0 \rangle}.
\end{align}
Let us recall that $\phi\bydef \mathcal{H}^{-1}(f)$, and we   define 
\begin{equation*}
	  \mathcal{J}_f \bydef \mathrm{e}^{\mathcal{H}^{-1}(f)}\mathrm{Id}-\partial_x^2 \qquad \text{ and } \qquad \mathcal{J}_1 \bydef \mathrm{Id}-\partial_x^2.
\end{equation*}
Note in passing that  
\begin{equation*}
	\mathcal{J}_1^{-1} \cos(\beta \cdot) = \tfrac{1}{1+\beta^2} \cos(\beta \cdot), \quad \text{for any } \beta \in \R.
\end{equation*}
Considering \eqref{c0def}, we further define  
\begin{equation*}
\alpha \bydef \frac{2\pi}{L}	, \qquad  \mathfrak{L} \bydef \ud_{f} \mathcal F (c_{0},1) = -\tfrac{1}{1+\alpha^2} \mathrm{Id} + \mathcal{J}_1^{-1},
\end{equation*}
and, recalling from \eqref{nonlinear_problem_2}, that 
\begin{equation*} 
 	\mathcal{F}(c,f) = \tfrac{c^2}{2} \left (\tfrac{1}{f^2} -1\right) + p(f)-p(1) + \mathcal H^{-1}(f),
 \end{equation*}   
it then follows by differentiating once that 
\begin{equation*} 
 \ud_f	\mathcal{F}(c,f) h = \left(-c^2f^{-3}+p'(f)\right) h + \mathcal{J}_f^{-1} h,
 \end{equation*} 
and, by differentiating again, that 
\begin{equation*} 
 \ud^2_{ff}	\mathcal{F}(c,f) (h_1,h_2) = \left(3c^2f^{-4}+p''(f)\right) h_1 h_2 - \mathcal{J}_f^{-1} \left( \mathrm{e}^\phi (\mathcal{J}_f^{-1} h_1) (\mathcal{J}_f^{-1} h_2) \right).
 \end{equation*} 
This implies that  
\begin{align} \nonumber
 \ud^2_{ff}	\mathcal{F}(c_0,1) (\xi_0,\xi_0) 
 & = \left(3c_0^2+p''(1)\right)\xi_0^2 - \mathcal{J}_1^{-1} \left( (\mathcal{J}_1^{-1} \xi_0)^2 \right)\\ \nonumber
 & = \left(3c_0^2 +p''(1)\right)\xi_0^2 - \tfrac{1}{2(1+\alpha^2)^2} (1+\tfrac{1}{1+4\alpha^2}\cos (2 \alpha \cdot))\\ \nonumber
 & = \tfrac12 \left(3c_0^2 +p''(1)\right) (1+\cos (2 \alpha \cdot)) - \tfrac{1}{2(1+\alpha^2)^2} (1+\tfrac{1}{1+4\alpha^2}\cos (2 \alpha \cdot))\\ \label{dff}
 & =  \mathscr{A} +  \mathscr{B} \cos (2 \alpha \cdot),
 \end{align} 
 where 
 \begin{equation*}
 	\mathscr A \bydef \tfrac12 \left( 3 c_0^2+  p''(1)\right)-\frac{1}{2(1+\alpha^2)^2} \quad \text{ and } \quad \mathscr B \bydef \tfrac12 \left( 3 c_0^2+  p''(1)\right)-\frac{1}{2(1+\alpha^2)^2(1+4 \alpha^2)}.
 \end{equation*}
Thus, we get  $\Psi'(0)=0$. Differentiating again the main equation, we obtain 
\begin{align*} 
 \ud^3_{fff}	\mathcal{F}(c_0,1) (\xi_0,\xi_0,\xi_0) 
 &= \left(-12c_0^2 + p'''(1)\right) \xi_0^3 - \mathcal{J}_1^{-1} \left( (\mathcal{J}_1^{-1} \xi_0)^3 \right) \\
 &\quad + 3 \mathcal{J}_1^{-1} \left( (\mathcal{J}_1^{-1} \xi_0)\,  \mathcal{J}_1^{-1} (\mathcal{J}_1^{-1} \xi_0)^2\right).
 \end{align*} 
It follows that 
\begin{align}\nonumber  
 \mathcal{P}\, \ud^3_{fff}	\mathcal{F}(c_0,1) (\xi_0,\xi_0,\xi_0) 
 &= \tfrac{3}{4} \left(-12c_0^2 + p'''(1)\right) \xi_0
- \tfrac{3}{4(1+\alpha^2)^4} \xi_0  \\ \nonumber 
 &\quad + \tfrac{3}{(1+\alpha^2)^3} \mathcal{J}_1^{-1} \left( \left(\tfrac12 + \tfrac{1}{2(1+4\alpha^2)} \cos(2 \alpha \cdot ) \right) \xi_0\right)\\  \label{dfff} 
  &= \tfrac{3}{4} \left(-12c_0^2 + p'''(1)\right) \xi_0
+ \tfrac{3}{4(1+\alpha^2)^4} \xi_0 + \tfrac{3}{4(1+\alpha^2)^4 (1+4\alpha^2)}   \xi_0.
 \end{align} 
Noticing now that  one has from \eqref{dff} that
\begin{equation*}
	\begin{aligned}
		\mathfrak{L}^{-1} (\mathrm{Id}-\mathcal{P})\,  \ud^2_{ff}	\mathcal{F}(c_0,1) (\xi_0,\xi_0) 
		&=  (1-\tfrac{1}{1+\alpha^2})  \mathscr{A} +  (\tfrac{1}{1+4\alpha^2}-\tfrac{1}{1+\alpha^2}) \mathscr{B}  \cos (2 \alpha \cdot)
\\
	&\bydef \tilde{\mathscr{A}}  + \tilde{\mathscr{B}}\cos (2 \alpha \cdot),
	\end{aligned}
\end{equation*}
 we deduce that 
\begin{equation*}
	\begin{aligned}
	&\mathcal{P}\, \ud^2_{ff} \mathcal F (c_{0},1) (\xi_0, \mathfrak{L}^{-1}(\mathrm{Id}-\mathcal{P})\, \ud^2_{ff} \mathcal F (c_{0},1)(\xi_0,\xi_0)) \\ 
 &\qquad =\left(3c_0^2+p''(1)\right) \left( \tilde{\mathscr{A}}  \xi_0 +  \tilde{\mathscr{B}}  \mathcal{P}  \xi_0   \cos (2 \alpha \cdot)\right) - \mathcal{P}\mathcal{J}_1^{-1} \left(\tfrac{1}{1+\alpha^2} \xi_0 \left(\tilde{\mathscr{A}} + \tfrac{\tilde{\mathscr{B}}}{1+4\alpha^2} \cos (2\alpha \cdot) \right)  \right)
 \\
 &\qquad =\left(3c_0^2+p''(1)\right) \left( \tilde{\mathscr{A}}   + \tfrac12 \tilde{\mathscr{B}}    \right)\xi_0 -  \tfrac{1}{(1+\alpha^2)^2}  \left(\tilde{\mathscr{A}} + \tfrac{\tilde{\mathscr{B}}}{2(1+4\alpha^2)}  \right) \xi_0
 \\
 &\qquad = ( 2  \mathscr{A} \tilde{\mathscr{A}} + \mathscr{B} \tilde{\mathscr{B}} )\xi_0.
\end{aligned}
\end{equation*}
 Summing up with \eqref{dfff} and using \eqref{Psi''}, we deduce that $\Psi''(0)=0$ if and only if 
 \begin{equation*}
 	2  \mathscr{A} \tilde{\mathscr{A}} + \mathscr{B} \tilde{\mathscr{B}} + \left(3c_0^2 -\tfrac{1}{4} p'''(1)\right)
- \tfrac{1}{4(1+\alpha^2)^4}  - \tfrac{1}{4(1+\alpha^2)^4 (1+4\alpha^2)} =0.
 \end{equation*}
 Multiplying by $4 (1 + \alpha^2)^5( 1 + 4 \alpha^2)^3$, 
this can be seen as a polynomial equation in $\alpha^2$ of degree at most $8$, which is 
\begin{equation*}
\sum_{n=0}^{8} c_n (\alpha^2)^n = 0,
\end{equation*}
where, setting $p_k = p^{(k)}(1)$ for $k \in \{1,2 ,3\}$, the coefficients are given by (the formulas below have been exactly computed using Matlab)
\begin{align*}
a_8 &= 1152p_1^2 + 768p_1p_2 + 128p_2^2 + 768p_1 - 64p_3 \\
a_7 &= 5040p_1^2 + 3360p_1p_2 + 560p_2^2 + 6720p_1 + 768p_2 - 368p_3 + 768 \\
a_6 &= 8640p_1^2 + 5760p_1p_2 + 960p_2^2 + 17712p_1 + 2336p_2 - 892p_3 + 4800 \\
a_5 &= 7191p_1^2 + 4794p_1p_2 + 799p_2^2 + 21564p_1 + 2464p_2 - 1181p_3 + 9024 \\
a_4 &= 2844p_1^2 + 1896p_1p_2 + 316p_2^2 + 13986p_1 + 962p_2 - 925p_3 + 7852 \\
a_3 &= 378p_1^2 + 252p_1p_2 + 42p_2^2 + 5304p_1 + 32p_2 - 434p_3 + 3727 \\
a_2 &= -36p_1^2 - 24p_1p_2 - 4p_2^2 + 1302p_1 - 38p_2 - 118p_3 + 1080 \\
a_1 &= -9p_1^2 - 6p_1p_2 - p_2^2 + 192p_1 - 4p_2 - 17p_3 + 166 \\
a_0 &= 12p_1 - p_3 + 10.
\end{align*}
Analyzing the coefficients of $a_8$, $a_7$ and $a_0$, for instance, we find that they cannot vanish all simultaneously. This ensures that the polynomial cannot be identically zero.
 Therefore, for only at most $8$ values of $\alpha^2$, and thus $L$, we may have that  $\Psi''(0) = 0$. This  concludes the proof of the proposition.  
\end{proof}

\begin{remark}[About the set $\mathcal N$ and the property $\Psi''(0)\neq 0$]\label{remark:about:N:2}
	Observe that the possible vanishing of $\Psi''(0)$ is equivalent to the condition that $L\in \mathcal N$, for some set $\mathcal N$ containing the zeros of the polynomial of order at most eight from the proof above.
	
	 As will become apparent later on, the condition that $\Psi''(0)\not=0$ is merely required to guarantee the validity of the global bifurcation argument, although it is likely not optimal.
One could, in principle, further investigate the situation where $\Psi''(0)=0$ (i.e., when $L$ belongs to the set $\mathcal{N}$ consisting of these eight values) but $\Psi^{(n)}(0)\neq 0$ for some $n \geqslant 3$. However, such analysis would involve highly intricate computations which, we conjecture, might ultimately allow one to recover the admissible values of $L$ within $\mathcal{N}$.

For the sake of simplicity, we choose not to pursue this direction and instead state our main results under a condition that excludes a finite number of values of the period $L$. Nevertheless, we can find   an  example where  $\mathcal N=\varnothing$. Taking, for example, $p(\rho)=\kappa \rho^2$, for some $\kappa>0$, it is readily seen that the coefficients of the polynomial above are all strictly positive (for all values of $\kappa$), except possibly  for $a_1 $ and $a_2$, which are now given by 
\begin{equation*}
	\begin{aligned}
		a_2 &= - 256\kappa^2 + 2528\kappa  + 1080 \\
a_1 &= -64\kappa^2 +  376\kappa    + 166 .
	\end{aligned}
\end{equation*}
Moreover, for $\kappa$ sufficiently small (e.g., $\kappa\leqslant 6$), both $a_1$ and $a_2$ are strictly positive. Consequently, the polynomial with coefficients $a_0,\dots, a_8$ does not vanish  for any $L>0$.
\end{remark} 

\subsection{Properties of small-amplitude traveling waves}

Given the statement of {Theorem \ref{thm:1},} we  now obtain a local branch of solutions $(c_s,f_s)_{s\in (-\varepsilon,\varepsilon)}$ to the nonlinear equation \eqref{nonlinear_problem_2} which satisfies all the properties previously established in Section \ref{sec:localbif}. To proceed, we denote the set corresponding to (the right part of) this curve  by
\begin{equation}\label{Rlocdef}
		\mathcal R_\loc  = \{(c_s, f_s) \in U_{\delta_0}
		 :   \mathcal F(c_s, f_s) =0 , \text{ for all } s\in [0,\varepsilon) \},
	\end{equation}
	where, from now on, the open set $U_{\delta_0}$ is given by 
	\begin{equation}\label{Domain:Def:final}
	U_{\delta_0} \bydef  \{ (c, f)\in \mathbb R\times  X: \   \delta_0 <  f (x) < a^* (c) \}    ,
\end{equation}
	with $\delta_0>0$ is the lower bound given by Proposition \ref{prop:delta:min}, and $a^*(c)$ is the unique solution (in $\xi$) of  the equation 
	\begin{equation*}
		\xi^3 p'(\xi) = c^2.
	\end{equation*} 
	We recall that $a^*(c)$ is the first critical point of the function $\mathcal G_c$, as    previously discussed in Section \ref{section:pressure}. 
The above choice of $\delta_0$ ensures that, along the bifurcation curve, the solutions never touch the side of boundary from below of $U_{\delta_0}$, i.e.,   $f>\delta_0$. 
On the other hand, in view of Propositions \ref{prop:uniform_bound and Holder regularity} and   \ref{pro:Lipregularity}, the choice $\beta(c) = a^*(c)$ ensures that if the maximum  of $f$ is attained exactly at the value $a^*(c)$, then  $f$ is a singular solution to \eqref{main_EQ} (in the sense that it has a limited global Lipschitz regularity).

In the next proposition, we are going to further establish  two additional properties of the local curve of solutions, which will be   useful later  in the construction of the global bifurcation branch.

\begin{proposition}[Additional properties of the local branch of bifurcation]\label{prop:transversality:c}  
For a fixed pressure $P$, there exists $\mathcal N \subset (0,\infty)$ of cardinal at most $8$, such that, for any $L \in (0,\infty) \setminus \mathcal N$ and  $(c_s,f_s)\in \mathcal R_\loc $, the map  
	\begin{equation*}
		s \in (-\varepsilon ,\varepsilon) \mapsto \frac{\ud  c_s  }{\ud s}
	\end{equation*}
is not the zero function.
Moreover, for $\varepsilon$ being taken sufficiently small, it holds that  	 
	\begin{equation*}
		x\mapsto f_s(x) \text{ is non-decreasing on } (-\nicefrac{L}{2} ,0),
	\end{equation*}
	for any $s\in [0 ,\varepsilon).$
\end{proposition}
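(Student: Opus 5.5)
The statement has two parts, and I would treat them separately.

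\emph{First part: $s\mapsto \tfrac{\ud c_s}{\ud s}$ is not the zero function.} Here $c_s=\Psi(s)$ with $\Psi$ analytic, as given by Theorem~\ref{thm:1}. Proposition~\ref{prop:Psi} provides a set $\mathcal N$ of cardinal at most $8$ such that, for every $L\notin\mathcal N$, one has $\Psi'(0)=0$ but $\Psi''(0)\neq 0$. Hence
\begin{equation*}
\tfrac{\ud c_s}{\ud s}=\Psi''(0)s+\mathcal O(s^2),
\end{equation*}
which is nonzero for $0<|s|\ll 1$. So this part is immediate once we take the same $\mathcal N$ as in Proposition~\ref{prop:Psi}.

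\emph{Second part: monotonicity of $f_s$ on $(-\nicefrac L2,0)$.} I would apply Lemma~\ref{lemma:stability:monotonicity} on $[a,b]=[-\nicefrac L2,0]$, and this requires $C^2$ control of $s\mapsto f_s$, not just control in $X$.

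\begin{itemize}
\item \emph{Upgrade to $C^2$ closeness.} For small $s$, $f_s$ is close to $1$ in $L^\infty$, so $f_s$ takes values in $[\delta,a^*(c_s)-\eta]$ for some fixed $\eta>0$, since $a^*(c_0)>1$. There $\mathcal G_{c_s}$ is invertible with analytic inverse, and we can write $f_s=\mathcal G_{c_s}^{-1}(-\mathcal H^{-1}(f_s))$.
\item By Theorem~\ref{thm:1}, $s\mapsto f_s$ is analytic into $X$. By the regularity step in its proof, $s\mapsto\mathcal H^{-1}(f_s)$ is analytic into $C^2(\T)$.
\item Since $(c,\psi)\mapsto \mathcal G_c^{-1}(-\psi)$ is smooth as a superposition operator on $C^2$, the map $s\mapsto f_s$ is $C^1$ into $C^2(\T)$.
\item Therefore $f_s=1+s\cos(\alpha\cdot)+o(s)$ in $C^2$, with $\alpha=\tfrac{2\pi}{L}$.
\end{itemize}

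\emph{Normalization.} For $s>0$, set $g_s=(f_s-1)/s$. Then $g_s\to\cos(\alpha\cdot)$ in $C^2$. The limit $f=\cos(\alpha\cdot)$ satisfies:
\begin{itemize}
\item $f'=-\alpha\sin(\alpha x)>0$ on $(-\nicefrac L2,0)$;
\item $f''(0)=-\alpha^2<0$ and $f''(-\nicefrac L2)=\alpha^2>0$.
\end{itemize}
These are exactly the hypotheses of Lemma~\ref{lemma:stability:monotonicity} with $a=-\nicefrac L2$ and $b=0$.

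\emph{Boundary conditions.} Since $f_s$ is even, $L$-periodic and $C^1$, we get $g_s'(0)=g_s'(-\nicefrac L2)=0$, which satisfies the required boundary sign conditions.

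\emph{Conclusion.} Lemma~\ref{lemma:stability:monotonicity} gives that $g_s$, and hence $f_s$, is increasing on $(-\nicefrac L2,0)$ for $0<s<\varepsilon$, after shrinking $\varepsilon$. At $s=0$, $f_0\equiv 1$ is trivially non-decreasing.

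\emph{Main obstacle.} The expected difficulty is the $C^2$ upgrade, since Crandall--Rabinowitz was run in $C_{\mathrm{even}}$. The bootstrap through $\mathcal G_c^{-1}$ with $\mathcal H^{-1}:X\to C^2$ analytic should handle it. One must check uniformity in $s$ of the distance of $f_s$ to $a^*(c_s)$, which follows from continuity of $c_s$ and $f_s\to1$.
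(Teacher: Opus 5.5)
Your proposal is correct and follows essentially the same route as the paper. The first part comes from $\Psi''(0)\neq 0$ in Proposition~\ref{prop:Psi}; the second part upgrades the expansion of $f_s$ to $C^2$ via the identity $f_s=\mathcal G_{c_s}^{-1}(-\mathcal H^{-1}(f_s))$ and then applies Lemma~\ref{lemma:stability:monotonicity} to $(f_s-1)/s$, whose endpoint derivatives vanish by parity and periodicity. Using $C^1$-in-$s$ regularity with an $o(s)$ remainder, instead of the paper's $C^2$ regularity with an $\mathcal{O}(s^2)$ remainder, is a harmless variation.
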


\begin{proof}
The justification of the first claim follows from the expansion 
\begin{equation*}
	\frac{\ud c_s}{\ud s} = \Psi'(s) = \Psi''(0) s + \mathcal O(s^2), \quad \text{for } |s| < \varepsilon,   \quad  \text{ where }  \Psi''(0)  \neq  0,
\end{equation*}
which is a consequence of Theorem \ref{thm:1} and Proposition \ref{prop:Psi}. We thus focus our attention on the proof of the second claim in the proposition. To that end, we recall  that the solution $f_s$ can be expanded (according to  Theorem \ref{thm:1}, again)  as  
	\begin{equation}\label{local_bif}
		f_s= 1 + s\cos \left ( \tfrac{2\pi  }{L} \cdot \right) + \mathcal O (s^2),  \quad \text{for } |s| < \varepsilon,  
	\end{equation}
where, at the moment,  this asymptotic holds in the  $C(\mathbb T)$-topology. Notice, moreover, that the local bifurcation theory ensures that the map
\begin{equation*}
(-\varepsilon ,\varepsilon) \to  C(\mathbb T), \quad	s\mapsto f_s   , 
\end{equation*} 
is analytic, whence it belongs to $C^2( (-\varepsilon,\varepsilon); C(\mathbb T))$. This, combined with the fact that $f_s$ is a fixed point of the smooth map  
\begin{equation*}
\left\{f \in  C(\mathbb T),\   \delta_c <f(x) < a^*(c) \right\} \to C^2(\mathbb T), \qquad	f \mapsto \mathcal G_{c_s}^{-1} (- \mathcal H^{-1}(f)),
\end{equation*}
yields that 
\begin{equation*}
	s\mapsto f_s \in C^2((-\varepsilon,\varepsilon); C^2(\mathbb T)). 
\end{equation*} 
Therefore, expanding $f_s$, with respect to $s\in (-\varepsilon,\varepsilon)$, entails, by virtue of the uniqueness of Taylor expansion, that \eqref{local_bif} holds in the $C^2(\mathbb T)$-topology. 

To conclude, noticing that $f_s'(0)=f_s'(-\nicefrac{L}{2})=0$, by parity and periodicity, and that the function 
\begin{equation*}
	\cos \left ( \tfrac{2\pi}{L} \cdot \right)
\end{equation*}
is increasing on the interval $(-\nicefrac{L}{2},0)$, we can employ Lemma \ref{lemma:stability:monotonicity} to the perturbation 
\begin{equation*}
	\frac{f_s-1}{s}
\end{equation*}
in order to find that the latter function is increasing on the interval $(-\nicefrac{L}{2},0)$ for any  $s \in (0,\varepsilon)$. This completes the proof of the proposition. 
 \end{proof}

\section{Global bifurcation}\label{sec:global}
Theorem \ref{thm:1} gives rise to the family of solutions 
\begin{equation*}
	(c,f)=(c_s, f_s)_{s\in (-\varepsilon,\varepsilon)}, \quad \text{for some } \varepsilon >0,
\end{equation*}
of the nonlinear equation \eqref{main_EQ}, which recast below for convenience 
\begin{equation*} 
	 \tfrac{c^2}{2} \left (\tfrac{1}{f^2} -1\right) + p(f)-p(1) + \mathcal H^{-1}(f) =0,
\end{equation*}
where 
\begin{equation*}
	(c_0,f_0)= \left (   \sqrt{p'(1) +  \tfrac{1}{1+(\nicefrac {2\pi}{L})^2}}, 1\right).
\end{equation*}
The solution  $(c_s,f_s)$ belongs to the open set $U_{\delta_0}$ defined in \eqref{Domain:Def:final} as  
\begin{equation*}
U_{\delta_0}  = \{ (c,f) \in \mathbb R \times C(\mathbb T)  : \delta_0 < f(x) < a^*(c) \},	
\end{equation*}
for any $s\in (-\varepsilon,\varepsilon)$. We also recall that the choices of $\delta_0$ and $a^*(c)$ are made according to the discussion just after \eqref{Domain:Def:final}.

The purpose of this section is to study the extension of the local bifurcation branch $(c_s, f_s)_{s\in (-\varepsilon,\varepsilon)}$ beyond the interval $s\in (-\varepsilon,\varepsilon)$. More precisely, we will   show that this extends to global branch of bifurcation $(c_s,f_s)_{s\in \mathbb R}$, and we will also    identify many quantitative properties of an ultimate profile, denoted by $f_{\infty}$, which is an object that lives at the end of the global branch of bifurcation. 
From now on,  we will only restrict our analysis to the bifurcation curve corresponding to  $s\geqslant 0$, which we previously agreed to denote it by $\mathcal R_\loc $.

\subsection{Global curve of bifurcation: An existence result}

Let $\mathcal R_\loc $ denote the (right) local branch of bifurcation previously constructed in Theorem \ref{thm:1}, i.e., 
	\begin{equation*}
		\mathcal R_\loc  = \{(c_s, f_s) \in U_{\delta_0}
		 :   \mathcal F(c_s, f_s) =0 , \text{ for all } s\in [0,\varepsilon) \}.
	\end{equation*}
	The following statement ensures the existence of a global (maximal) extension of this curve  of solutions.

\begin{theorem}\label{thm:GB}
For a fixed pressure $P$, there exists $\mathcal N \subset (0,\infty)$ of cardinal at most $8$ such that, for any $L \in (0,\infty) \setminus \mathcal N$,
	there exists a set $\mathcal R$ extending $\mathcal R_\loc $ in the following sense:
	\begin{equation*}
			\mathcal R_\loc \subset \mathcal R  \bydef  \{(c_s, f_s) \in  U_{\delta_0}
		 :   \mathcal F(c_s, f_s) =0 , \text{ for all } s\in [0,\infty)\},
	\end{equation*}
	where the mapping  $ s\mapsto (c_s,f_s) $ is continuous on $[0,\infty)$ and, at each point, $\mathcal R$ has a local analytic reparameterization. 
	
	Moreover, one of the following scenarios occurs: 
	\begin{enumerate}
		\item [(i)] Non-existence of a limiting speed/wave profile, i.e., 
		 \begin{equation*}
		 	\lim _{s\to \infty} \left( |c_s| +  \norm {f_s}_{C(\mathbb T)}\right) = \infty.
		 \end{equation*}
	    \item [(ii)] The global branch approaches the boundary of $ U_{\delta_0}$ as $s$ tends to infinity.
	    \item [(iii)] The global branch is a loop, i.e., there is $T>0$ such that 
	    \begin{equation*}
	    	(c_{s+T},f_{s+T}) = (c_s,f_s), \quad \text{for all } s\geqslant 0,
	    \end{equation*}
	    and, thus, 
	     \begin{equation*}
	     	\mathcal R = \{ (c_{s},f_{s}), \  s\in [0,T]  \}  \quad \text{ and } \quad  (c_{T},f_{T}) = \left(  \sqrt{p'(1) +  \tfrac{1}{1+(\nicefrac {2\pi}{L})^2}},1\right ).
	     \end{equation*} 
	\end{enumerate}
	Furthermore,  for any $\tilde{s} \in (0,\infty)$, there exists a continuous and injective parameterization $\varphi :(-1,1) \to \R$ such that $\varphi(0)=\tilde{s}$ and the map $t \mapsto (c_{\varphi(t)},f_{\varphi(t)})$ is analytic.
\end{theorem}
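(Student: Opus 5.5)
This theorem will follow from the analytic global bifurcation theorem of Buffoni--Toland (in the form recalled in the appendix, see also \cite{BJ03}), applied to $\mathcal F : U_{\delta_0}\subset \R\times X \to X$. That theorem has four kinds of hypotheses: local analyticity of $\mathcal F$, a Fredholm condition, compactness of bounded closed subsets of the zero set, and a nondegeneracy condition on the local curve. Once they are checked, the trichotomy (i)--(iii) and the existence of local analytic reparameterizations follow directly from the abstract result.

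\textbf{Step 1 (analyticity and the Fredholm property).} Local analyticity of $\mathcal F$ on $U_{\delta_0}$ was already shown in the proof of Theorem \ref{thm:1}. For $(c,f)\in U_{\delta_0}$ with $\mathcal F(c,f)=0$, I would write
\begin{equation*}
\ud_f\mathcal F(c,f)=\mathcal G_c'(f)\,\mathrm{Id}+\mathcal J_f^{-1}.
\end{equation*}
In $U_{\delta_0}$ we have $\delta_0<f<a^*(c)$. Since $\mathcal G_c'<0$ on $(0,a^*(c))$ and $f$ is continuous on the compact torus, $\mathcal G_c'(f)$ is bounded away from zero. Hence multiplication by $\mathcal G_c'(f)$ is an isomorphism of $X$. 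The operator $\mathcal J_f^{-1}$ maps $C(\T)$ into $C^2(\T)$, so it is compact on $X$. Consequently $\ud_f\mathcal F(c,f)$ is a compact perturbation of an isomorphism, hence Fredholm of index $0$.

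\textbf{Step 2 (compactness of bounded closed sets of solutions).} Let $K$ be closed and bounded in $\R\times X$, contained in the solution set and kept at positive distance from $\partial U_{\delta_0}$. Then $\delta_0+\eta\leqslant f\leqslant a^*(c)-\eta$ for some $\eta>0$. By \eqref{f=G^-1},
\begin{equation*}
f=\mathcal G_c^{-1}\bigl(-\mathcal H^{-1}(f)\bigr),
\end{equation*}
and $\mathcal G_c^{-1}$ is smooth, with uniform bounds, on the relevant compact range of values. Proposition \ref{prop:elliptic} bounds $\mathcal H^{-1}(f)$ in $C^2(\T)$ uniformly on $K$. This gives uniform $C^1$ bounds on $f$, and Arzel\`a--Ascoli yields precompactness. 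Alternatively, I would write the equation as $f=$ a compact map of $(c,f)$ and invoke the abstract compactness criterion in the appendix.

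\textbf{Step 3 (nondegeneracy of the local curve), the main obstacle.} The global theorem requires that $s\mapsto c_s$ is not constant near $0$, i.e.\ that the local branch is not of the form $(c_s,1)$ with $c_s$ frozen. This is exactly where $L\notin\mathcal N$ enters. By Proposition \ref{prop:Psi} and the first part of Proposition \ref{prop:transversality:c}, $\Psi''(0)\neq0$, so $\tfrac{\ud c_s}{\ud s}\not\equiv0$. This is the structural assumption I expect to cost the most care: one must match the precise form required in the appendix, where the Crandall--Rabinowitz curve must be of the analytic type with $\Psi$ nonconstant. The remaining bookkeeping is to check that the sets $\{\mathcal F=0\}\cap U_{\delta_0}$ satisfy the abstract setup, including the case of open domains handled via exhaustion by the closed sets $\{\delta_0+1/n\leqslant f\leqslant a^*(c)-1/n,\ |c|+\|f\|\leqslant n\}$.

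\textbf{Step 4 (conclusion).} With these hypotheses in place, the Buffoni--Toland theorem yields a continuous curve $\mathcal R\supset\mathcal R_\loc$ with local analytic reparameterizations $\varphi$. Along this curve, either it is unbounded (i), or it eventually leaves every one of the closed sets above, i.e.\ approaches $\partial U_{\delta_0}$ (ii), or it is a closed loop (iii). In the loop case the curve must return to the bifurcation point $(c_0,1)$, by uniqueness in the neighborhood $U_0$ of Theorem \ref{thm:1}.
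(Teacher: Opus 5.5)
Your Steps 1 and 3 coincide with the paper's ingredients: the Fredholm argument is Lemma~\ref{lem:Fredholm}, and $\Psi'\not\equiv0$ via $\Psi''(0)\neq0$ is Proposition~\ref{prop:transversality:c}. The paper's proof is then a direct application of Theorem~\ref{thm:global:CR}. The real difference is your Step 2, and as written it does not verify the hypothesis of the theorem you say you are invoking.

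Theorem~\ref{thm:global:CR}(ii) requires that \emph{every} bounded closed subset of $\mathcal S$ be compact, including sets whose elements have $\max f$ arbitrarily close to $a^*(c)$. Your $C^1$ bound comes from $f'=-\phi'/\mathcal G_c'(f)$. It degenerates exactly at the upper boundary, since $\mathcal G_c'(a^*(c))=0$. So you only obtain compactness of $\mathcal S\cap Q_n$ for sets kept uniformly away from $\partial U_{\delta_0}$. A bounded set closed in $\R\times C(\T)$ and contained in $\mathcal S$ need not stay at positive distance from $\partial U_{\delta_0}$, so this falls short of hypothesis (ii).

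The paper handles the degeneracy with the uniform $C^{1/2}$ estimate of Proposition~\ref{prop:uniform_bound and Holder regularity}. Strict convexity $\mathcal G_c''>0$ on $[\delta_0,a^*(c)]$ together with Lemma~\ref{lemma:MVT} gives $|f(x)-f(y)|^2\lesssim|\phi(x)-\phi(y)|\lesssim|x-y|$ all the way up to $f=a^*(c)$. Hence every bounded subset of $\mathcal S$ is relatively compact in $\R\times C(\T)$ (Lemma~\ref{lem:compactness:brach}).

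Your route can be made to work, but only with a different black box. You would need the exhaustion form of the analytic global bifurcation theorem (as in Constantin--Strauss--Varvaruca), which assumes compactness of $\mathcal S\cap Q_n$ for closed bounded sets $Q_n$ exhausting $U$. In that form the non-loop alternative comes out relative to the exhaustion, e.g.\ $|c_s|+\norm{f_s}_{C(\T)}+1/\mathrm{dist}\bigl((c_s,f_s),\partial U_{\delta_0}\bigr)\to\infty$. This buys a more elementary compactness proof, but it is not the theorem recalled in the appendix. It also does not supply what the paper needs next: in Proposition~\ref{prop:exit}, the same boundary-uniform H\"older bound is what allows extracting a convergent subsequence $(c_s,f_s)\to(c_\infty,f_\infty)$ with $f_\infty(0)=a^*(c_\infty)$.

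Your ``alternatively, a compact map'' remark points to a criterion the appendix does not contain. It could nevertheless close the gap if made precise. The family $\{\mathcal H^{-1}(f)\}$ is bounded in $C^2(\T)$, and $(c,y)\mapsto\mathcal G_c^{-1}(y)$ is uniformly continuous on the closed ranges $[\mathcal G_c(a^*(c)),\mathcal G_c(\delta_0)]$, $|c|\leqslant\gamma$, endpoint included. Together these give equicontinuity of the $f$'s up to the boundary, which is a qualitative version of the paper's estimate.
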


\begin{remark}
	In the statement above, we only state the properties of the global branch of bifurcation that are directly related to the scope of this paper. Nevertheless, one can further say a lot more about the continuation of the local branch of bifurcation. See   the general global bifurcation statement from  \cite[Theorem 9.1.1]{BJ03} for more details. 
\end{remark}

Before presenting the proof of Theorem \ref{thm:GB}, we are going to first establish two important properties of the curve of solutions to the equation \eqref{main_EQ}. The first result (given in Lemma \ref{lem:Fredholm}, below) shows that the operator $\ud_f \mathcal{F}(c,f)$ remains a Fredholm operator along the bifurcation curve.
The second one (given in Lemma \ref{lem:compactness:brach}, thereafter), discusses  a topological information encoded in the set of solutions to the equation \eqref{main_EQ}. These two lemmas  will constitute  the cornerstones in the proof of  Theorem \ref{thm:GB}.  

\begin{lemma}\label{lem:Fredholm}
	The operator $\ud_f \mathcal{F}(c,f) : C(\T) \to C(\T)$ is a Fredholm operator of index zero for any $(c,f) \in U_{\delta_0}$. 
\end{lemma}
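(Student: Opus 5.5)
The plan is to write the linearized operator as a multiplication operator plus a compact perturbation. Differentiating \eqref{nonlinear_problem_2} as in the proof of Proposition \ref{prop:Psi} gives, for $(c,f)\in U_{\delta_0}$ and $\phi=\mathcal H^{-1}(f)$,
\begin{equation*}
	\ud_f \mathcal F(c,f) h = \mathcal G_c'(f)\, h + \mathcal J_f^{-1} h, \qquad \mathcal J_f = \mathrm{e}^{\phi}\mathrm{Id}-\partial_x^2 .
\end{equation*}
The first task is to justify this formula on $C(\T)$, or on $X=C_{\mathrm{even}}(\T)$ since all operators preserve parity. For this I would use the analyticity of $\mathcal H^{-1}:U_{\delta_0}\to C^2(\T)$ established in the proof of Theorem \ref{thm:1}, together with the fact that $\ud_\phi\mathcal H(\phi)=\mathcal J_f$ is an isomorphism $C^2(\T)\to C(\T)$.

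\textbf{Invertibility of the multiplication part.} Since $(c,f)\in U_{\delta_0}$, we have $\delta_0<f(x)<a^*(c)$ for every $x$. By compactness of $\T$ and continuity of $f$, the range $f(\T)$ is a compact subset of $(\delta_0,a^*(c))$. On that interval $\mathcal G_c'<0$, as recalled in Section \ref{section:pressure}. Hence the function $m:=\mathcal G_c'(f)$ is continuous and satisfies $m\leqslant -\eta<0$ for some $\eta>0$. It follows that $h\mapsto mh$ is an isomorphism of $C(\T)$, with inverse given by multiplication by $1/m$. This is exactly where the strict upper bound $f<a^*(c)$ in the definition of $U_{\delta_0}$ is needed, and I expect it to be the only delicate point.

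\textbf{Compactness of the nonlocal part.} The operator $\mathcal J_f^{-1}$ maps $C(\T)$ boundedly into $C^2(\T)$. Indeed, $\mathrm{e}^\phi$ is continuous and bounded below by $\min f>0$, using the bounds of Proposition \ref{prop:elliptic}. Lax--Milgram then gives $H^1$ control, and the equation upgrades this to $C^2$, exactly as in Step 2 of that proposition. Composing with the compact embedding $C^2(\T)\hookrightarrow C(\T)$ (Arzel\`a--Ascoli) shows that $\mathcal J_f^{-1}$ is compact on $C(\T)$.

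\textbf{Conclusion.} We can write $\ud_f\mathcal F(c,f)=m\,(\mathrm{Id}+m^{-1}\mathcal J_f^{-1})$. Here $m^{-1}\mathcal J_f^{-1}$ is compact, so $\mathrm{Id}+m^{-1}\mathcal J_f^{-1}$ is Fredholm of index zero by Riesz--Schauder theory. Composing with the isomorphism given by multiplication by $m$ preserves both the Fredholm property and the index. Hence $\ud_f\mathcal F(c,f)$ is Fredholm of index zero. The same argument applies verbatim on the closed subspace $X$ of even functions, since multiplication by the even function $m$ and $\mathcal J_f^{-1}$ (with $\phi$ even) both preserve $X$.
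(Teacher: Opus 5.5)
Your proposal is correct and follows the same approach as the paper: the paper also writes $\ud_f\mathcal F(c,f)$ as multiplication by $p'(f)-c^2f^{-3}=\mathcal G_c'(f)$, which is invertible on $C(\T)$ because $f<a^*(c)$, plus the compact operator $\bigl(\mathrm{e}^{\mathcal H^{-1}(f)}\mathrm{Id}-\partial_x^2\bigr)^{-1}$, and concludes via a compact perturbation of an isomorphism. You additionally supply details the paper leaves as ``readily seen'': uniform negativity of $\mathcal G_c'(f)$ by compactness of $f(\T)$, and compactness through the bounded map $C(\T)\to C^2(\T)$ combined with Arzel\`a--Ascoli.
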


\begin{proof}By a direct computation, one can show that 
\begin{equation*}
	\ud_f \mathcal{F}(c,f) = (p'(f)-c^2f^{-3})\id  + \left(\mathrm{e}^{\mathcal{H}^{-1}(f)} \id   - \partial_x^2\right)^{-1}.
\end{equation*}
On the one hand, we have by virtue of the assumptions on the  pressure from \eqref{p_conditions1}, for any $(c,f) \in U_{\delta_0}$,  that  
\begin{equation*}
	f^3p'(f) < a^*(c)^3 p'(a^*(c))=c^2, 
\end{equation*}
which implies that the map 
\begin{equation*}
(p'(f)-c^2f^{-3}) \id  : C(\T) \to C(\T)
\end{equation*}
is an isomorphism. On the other  hand, it is readily seen that the operator 
\begin{equation*}
\left(\mathrm{e}^{\mathcal{H}^{-1}(f)} \id  - \partial_x^2\right)^{-1}
\end{equation*} 
is compact on $C(\T)$. This shows that $\ud_f \mathcal{F}(c,f) $ is a compact perturbation of an isomorphism, thereby deducing that it is a Fredholm operator of index zero. This completes the proof of the lemma.
\end{proof}

In the next lemma, we establish a compactness property for the set of solutions given by
 \begin{equation*}
		\mathcal S\bydef \{ (c,f) \in U_{\delta_0}: \mathcal F (c,f) =0  \}.
	\end{equation*}

\begin{lemma}[Compactness along the branch of bifurcation]\label{lem:compactness:brach}
Any bounded closed subset of $\mathcal S$ is compact.
\end{lemma}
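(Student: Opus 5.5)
The plan is to verify sequential compactness directly. We interpret ``closed'' as closed in $\mathbb R\times X$; since $\mathcal S\subset U_{\delta_0}$, we must make sure that limits of sequences in $\mathcal S$ are identified correctly. Let $K\subset\mathcal S$ be bounded and closed, and let $(c_n,f_n)_{n\in\mathbb N}\subset K$. Since $K$ is bounded, $(c_n)$ is bounded in $\mathbb R$, so after extracting a subsequence we may assume $c_n\to c$.

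For the profiles we would \emph{not} try to invert the local part $\mathcal G_{c_n}$. Its inverse degenerates near $a^*(c_n)$, so the identity $f_n=\mathcal G_{c_n}^{-1}(-\mathcal H^{-1}(f_n))$ gives no uniform control close to the maximum. Instead we use the uniform H\"older bound of Proposition~\ref{prop:uniform_bound and Holder regularity}. Every $(c_n,f_n)\in U_{\delta_0}$ satisfies $\delta_0\leqslant f_n\leqslant a^*(c_n)$, so that proposition, applied with $\delta=\delta_0$, gives
\begin{equation*}
\|f_n\|_{L^\infty(\T)}+\sup_{x\neq y}\frac{|f_n(x)-f_n(y)|}{|x-y|^{1/2}}\leqslant C(\delta_0),
\end{equation*}
with a constant independent of $n$ and of $c_n$. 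By the Arzel\`a--Ascoli theorem, a further subsequence satisfies $f_n\to f$ uniformly on $\T$. The limit $f$ is continuous and even, hence $f\in X$, and it satisfies $\delta_0\leqslant f\leqslant a^*(c)$ because $c\mapsto a^*(c)$ is continuous.

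Next we pass to the limit in the equation $\mathcal F(c_n,f_n)=0$. The local part $\mathcal G_{c_n}(f_n)\to\mathcal G_c(f)$ uniformly, since $p$ is smooth on $[\delta_0,\sup_n a^*(c_n)]$ and $c_n\to c$. For the nonlocal part we set $\phi_n=\mathcal H^{-1}(f_n)$ and $\phi=\mathcal H^{-1}(f)$, both well defined by Proposition~\ref{prop:elliptic}. Subtracting the two equations, testing against $\phi_n-\phi$, and using the monotonicity inequality \eqref{monotonicity:H} together with the lower bound $\phi_n,\phi\geqslant\log\delta_0$, we obtain
\begin{equation*}
\|\phi_n-\phi\|_{C(\T)}\lesssim\|\phi_n-\phi\|_{H^1(\T)}\lesssim_{\delta_0}\|f_n-f\|_{L^2(\T)}\longrightarrow 0 .
\end{equation*}
Hence $\mathcal F(c,f)=0$.

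Finally, $(c,f)$ is the limit in $\mathbb R\times X$ of elements of the closed set $K$, so $(c,f)\in K\subset\mathcal S$. This shows that $K$ is sequentially compact, hence compact.

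The main obstacle I expect lies in this last step, in the precise meaning of ``closed''. If $K$ were only closed relative to $\mathcal S$, the limit could a priori land on $\partial U_{\delta_0}$, namely where $\max f=a^*(c)$; the bound $\min f\geqslant 2\delta_0$ from Proposition~\ref{prop:delta:min} rules out only the lower boundary. The conclusion therefore relies on closedness in the ambient space $\mathbb R\times X$, which is the form in which the lemma is used in the global bifurcation theorem. The substantive analytic input is the $c$-independent H\"older bound from Proposition~\ref{prop:uniform_bound and Holder regularity}, which replaces any attempt to invert $\mathcal G_c$ near its critical point.
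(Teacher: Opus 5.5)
Your proposal is correct and follows the paper's argument: boundedness of the set together with the $c$-independent $\tfrac12$-H\"older bound of Proposition~\ref{prop:uniform_bound and Holder regularity} gives relative compactness in $\mathbb R\times C(\T)$ by Arzel\`a--Ascoli, and closedness in the ambient space $\mathbb R\times X$ then gives compactness; the paper uses this ambient-space reading of ``closed'' without stating it. Your passage to the limit in $\mathcal F(c_n,f_n)=0$ and your check that $\delta_0\leqslant f\leqslant a^*(c)$ are harmless, but they are redundant once you use that $K$ is closed in $\mathbb R\times X$.
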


\begin{proof}  
Let $B$ be a bounded set of $\mathcal{S}$. Then, there exists $\gamma> 0$ such that any $(c,f) \in B$ satisfies  
\begin{equation*}
	|c|\leqslant \gamma , \qquad \delta_0 < f(x) \leqslant a^*(c) \leqslant a^*(\gamma) , 
\end{equation*}
for any $x \in \T$, where we used the fact that $a^*$ is increasing (see Section \ref{section:pressure}).
Therefore, employing Proposition \ref{prop:uniform_bound and Holder regularity}, we find that $B$ is bounded in $\R \times C^{\frac12}(\T)$, whence relatively compact in $\R \times C(\T)$. This completes the proof of the lemma.
\end{proof}

\begin{proof}[Proof of Theorem $\ref{thm:GB}$] With Proposition \ref{prop:transversality:c}, Lemmas \ref{lem:Fredholm} and   \ref{lem:compactness:brach} in hand, it is readily seen that Theorem \ref{thm:GB}    follows by applying the global bifurcation theorem (Theorem \ref{thm:global:CR}).  \end{proof}

\subsection{Global curve of bifurcation: Discussion of the potential scenarios}

Now, we are in a position to examine the scenarios \textit{(i)}, \textit{(ii)} and \textit{(iii)} given by Theorem \ref{thm:GB}. We will show that both \textit{(i)} and \textit{(iii)} cannot occur, thereby forcing  \textit{(ii)} to hold.
\\
We begin by ruling out   scenario \textit{(iii)}. For this purpose, we  introduce the fluctuation profile
\begin{equation*}
	\tilde{f} \bydef f-1.
\end{equation*}
Accordingly, we define the shifted sets
\begin{equation*}
	\tilde{\mathcal{S}} \bydef \{ (c,\tilde{f}), \  (c,\tilde{f}+1) \in  \mathcal{S}\}
	 \qquad 	\text{ and } \qquad \tilde{\mathcal{R}} \bydef \{ (c,\tilde{f}),  \ (c,\tilde{f}+1) \in  \mathcal{R}\}.
\end{equation*}
We further  introduce   the set 
\begin{equation}\label{cone:def}
	\mathcal K\bydef  \big  \{ \tilde{f} \in C_{\mathrm {even}}(\mathbb T) : \tilde{f} \text{ is non-decreasing on  } (-\nicefrac{L}{2} ,0) \text{ and } \tilde{f}(-\nicefrac{L}{2}) \tilde{f}(0) \leqslant 0 \big \},
\end{equation}
which is obviously a cone in the sense that it is stable under the multiplication by non-negative real constants. Finally, we introduce  the subset of $\tilde{\mathcal{S}}$
\begin{equation*}
	\mathcal{T} \bydef \tilde{\mathcal{S}} \setminus \{0\}.
\end{equation*}
We now prove the following proposition.

\begin{proposition}\label{prop:openness:cone}  
	Each point of  $ (\R \times \mathcal K) \cap \tilde{\mathcal R} \cap \mathcal{T} $   is an interior point of $(\R \times \mathcal K) \cap \mathcal{T} $ in $\tilde{\mathcal S}$.
\end{proposition}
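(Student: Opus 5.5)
Fix a point $(c^\star,\tilde f^\star)\in(\R\times\mathcal K)\cap\tilde{\mathcal R}\cap\mathcal T$ and set $f^\star=\tilde f^\star+1$. We must produce a neighborhood $V$ of $(c^\star,\tilde f^\star)$ in $\tilde{\mathcal S}$ with $V\subset (\R\times\mathcal K)\cap\mathcal T$. Since $\mathcal T=\tilde{\mathcal S}\setminus\{0\}$ is open in $\tilde{\mathcal S}$ as soon as $\tilde f^\star\neq 0$, the content is the stability of the cone condition under perturbation within the solution set. The plan is a sign-improvement argument: first show that $f^\star$ enjoys \emph{strict} monotonicity and strict curvature at the endpoints, then use Lemma~\ref{lemma:stability:monotonicity} together with the $C^2$-control furnished by the equation to propagate this to nearby solutions.

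\textbf{Step 1 (strict properties of $f^\star$).} Since $(c^\star,f^\star)\in U_{\delta_0}$, we have $\delta_0<f^\star<a^*(c^\star)$, so Proposition~\ref{prop:smoothness:1} gives $f^\star\in C^\infty(\T)$. Also $f^\star$ is non-constant: if $f^\star\equiv\kappa$ with $\kappa\neq1$, then, since $\tilde f^\star(-\nicefrac L2)\tilde f^\star(0)\le0$, necessarily $(\kappa-1)^2\le0$, a contradiction. Lemma~\ref{lem:strict_monotonicity} then yields $(f^\star)'>0$ on $(-\nicefrac L2,0)$, $(f^\star)''(0)<0$ and $(f^\star)''(-\nicefrac L2)>0$. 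For the sign condition, either $\tilde f^\star(-\nicefrac L2)<0<\tilde f^\star(0)$ strictly, or one endpoint value equals $0$, i.e.\ $f^\star(x_0)=1$ there. In the latter case Lemma~\ref{lamma:sign} forces $f^\star-1$ to change sign. Combined with the strict monotonicity and the fact that $x_0$ is an extremum, this is impossible. Hence $\tilde f^\star(-\nicefrac L2)<0<\tilde f^\star(0)$ strictly.

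\textbf{Step 2 ($C^2$ continuity on $\tilde{\mathcal S}$).} Let $(c,f)\in\mathcal S$ be $\R\times C(\T)$-close to $(c^\star,f^\star)$. Since $\max f^\star<a^*(c^\star)$, the continuity of $a^*$ keeps $f$ in a compact subinterval of $(\delta_0,a^*(c))$, uniformly for such $(c,f)$. There, $f=\mathcal G_c^{-1}(-\mathcal H^{-1}(f))$ with $\mathcal G_c^{-1}$ smooth jointly in $(c,\cdot)$, and $\mathcal H^{-1}$ is continuous from $C(\T)$ to $C^2(\T)$ by Proposition~\ref{prop:elliptic} (indeed analytic, as in the proof of Theorem~\ref{thm:1}). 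Hence $\|f-f^\star\|_{C^2}\to0$ as $(c,f)\to(c^\star,f^\star)$ in $\R\times C(\T)$. This step is the main technical point: one must check that the composition is continuous into $C^2$ with the speed varying, and the bootstrap mirrors that of Proposition~\ref{prop:transversality:c}.

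\textbf{Step 3 (conclusion).} Every $f$ in $\mathcal S$ is in $X=C_{\mathrm{even}}(\T)$, so $f'(0)=f'(-\nicefrac L2)=0$. Lemma~\ref{lemma:stability:monotonicity} applies on $[a,b]=[-\nicefrac L2,0]$ with the signs from Step 1, and shows that $f$ is increasing on $(-\nicefrac L2,0)$ once it is $C^2$-close to $f^\star$. Uniform closeness preserves $f(-\nicefrac L2)<1<f(0)$, so $\tilde f\in\mathcal K$ and $\tilde f\neq0$. Thus a small $C(\T)$-ball around $(c^\star,\tilde f^\star)$, intersected with $\tilde{\mathcal S}$, lies in $(\R\times\mathcal K)\cap\mathcal T$, which proves the proposition.
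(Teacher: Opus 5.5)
Your proposal is correct and follows the paper's proof essentially step by step. You upgrade $C^0$-closeness of solutions to $C^2$-closeness via Proposition~\ref{prop:elliptic} and the smooth inversion of $\mathcal G_c$ below $a^*(c)$. You then combine Lemma~\ref{lem:strict_monotonicity}, Lemma~\ref{lemma:stability:monotonicity} (using $g'(0)=g'(-\nicefrac L2)=0$) and Lemma~\ref{lamma:sign} to keep monotonicity and the strict sign condition. The only cosmetic difference is in handling the varying speed: you use joint smoothness of $(c,y)\mapsto\mathcal G_c^{-1}(y)$, whereas the paper freezes $c_f$ and absorbs the change of speed into the term $\tfrac12(c_g^2-c_f^2)(g^{-2}-1)$.
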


\begin{proof}
We proceed in two steps.
\\
\underline{\em Step 1. From $C^0$-stability to $C^2$-Stability.} Let $(c_f,f) \in \mathcal{S}$ be fixed, and consider another solution $(c_g,g) \in \mathcal{S}$ such that
	\begin{equation}\label{stability:bound:cf}
		|c_f-c_g|+\norm {f-g}_{C(\mathbb T)} \leqslant \varepsilon,
	\end{equation}
	for $\varepsilon>0$ small enough in a sense to be made precise, later on. A direct consequence, by virtue of Proposition \ref{prop:elliptic}, is that 
	\begin{equation}\label{stability:bound:phi}
		\norm {\phi_f-\phi_g}_{C^2(\T)}\lesssim \varepsilon,
	\end{equation}
	where we denote 
	\begin{equation*}
		\phi_f = \mathcal H^{-1}(f), \quad \phi_g = \mathcal H^{-1}(g).
	\end{equation*}
	Indeed, this can be shown by, first, employing Sobolev embedding and  the  stability estimate \eqref{monotonicity:H} from the proof of Proposition \ref{prop:elliptic}, which ensure that 
	\begin{equation*}
		\norm {\phi_f-\phi_g}_{C(\T)} \lesssim \norm {\phi_f-\phi_g}_{H^1(\T)} \lesssim_{\delta_0} \norm {f-g}_{L^2(\T)}\lesssim \varepsilon.
	\end{equation*}
	Then, recalling that 
	\begin{equation*}
		\partial_{x}^2 \phi_f - \partial_{x}^2\phi_g = \mathrm{e}^{\phi_f}  - \mathrm{e}^{\phi_g} + g-f, 
	\end{equation*}
	yields eventually to the control 
	\begin{equation*}
		\norm {\phi_f-\phi_g}_{C^2(\T)} \lesssim \norm {\phi_f-\phi_g}_{C(\T)}+  \norm {f-g}_{C(\T)}\lesssim \varepsilon.
	\end{equation*} 
	 Now, as $(c_f,f) $ and $ (c_g,g)$ are both in $ U_{\delta_0}$, we have that
	\begin{equation}\label{fga^*}
		\max_{x\in \mathbb T} f(x)< a^*(c_f) \qquad \text{and} \qquad \max_{x\in \mathbb T} g(x)<  a^*(c_g).
	\end{equation}
	Thus, by Proposition \ref{prop:smoothness:1}, both $f$ and $g$ are smooth ($f,g\in C^\infty (\mathbb T)$). One can in fact show, in that case, that 
	\begin{equation*}
		\norm {(f,g)}_{C^2(\mathbb T)}\lesssim_{\delta_0} \norm {(f,g)}_{C(\mathbb T)}.
	\end{equation*} 
Using next the equation \eqref{main_EQ} to write	    
		\begin{equation*}
		\mathcal G_{c_f}(f)-\mathcal  G_{c_g}(g) =  \phi_g -\phi_f,
		\end{equation*}
	and it follows that  
		\begin{equation*}
		\mathcal G_{c_f}(f)-\mathcal  G_{c_f}(g) = \mathcal  G_{c_g}(g) -\mathcal  G_{c_f}(g) +   \phi_g -\phi_f= \tfrac12 (c_g^2-c_f^2)(g^{-2}-1)+   \phi_g -\phi_f.
		\end{equation*}
Since $g \geqslant \delta_0$ and $g$ is smooth,  it then happens that 
$$\|g^{-2}-1\|_{C^2(\T)} \lesssim_{\delta_0} 1.$$ Therefore, using \eqref{stability:bound:cf} with \eqref{stability:bound:phi}, we obtain that
		\begin{equation*}
		\left\|\mathcal G_{c_f}(f)-\mathcal  G_{c_f}(g)\right\|_{C^2(\T)} \lesssim_{\delta_0} \varepsilon.
		\end{equation*}	
In view of \eqref{stability:bound:cf} and \eqref{fga^*}, taking $\varepsilon>0$ small enough, one finds that  
	\begin{equation*}
		\max_{x\in \mathbb T} g(x)< a^*(c_f).
	\end{equation*}	
	With this bound, together with the fact that   the map $\mathcal{G}_{c_f}$ is invertible on $[\delta_0,\Delta_0]$, with a smooth inverse,  where 
\begin{equation*}
\Delta_0 \bydef  \max\left\{\max_{x\in \mathbb T} f(x), \max_{x\in \mathbb T} g(x) \right\} < 	a^*(c_f),
\end{equation*}
we arrive at the bound  
	\begin{align*}
\left\| f-g\right\|_{C^2(\T)} 
&= \left\|\mathcal G_{c_f}^{-1}(\mathcal G_{c_f}(f))-\mathcal G_{c_f}^{-1}(\mathcal  G_{c_f}(g))\right\|_{C^2(\T)}  \lesssim_{\delta_0,c_f,f}	\left\|\mathcal G_{c_f}(f)-\mathcal  G_{c_f}(g)\right\|_{C^2(\T)} \lesssim_{\delta_0} \varepsilon.
		\end{align*}	
\underline{\em Step 2. Conclusion.} 
Let now  $(c_f,\tilde{f}) \in  (\R \times \mathcal K) \cap \tilde{\mathcal R} \cap \mathcal{T} $ be fixed,  and consider  $(c_g,\tilde{g}) \in \tilde{\mathcal S}$ being sufficiently close to $(c_f,\tilde{f})$ in $\R \times C(\T)$. Recall that, taking
\begin{equation*}
	f=\tilde{f}+1, \qquad g=\tilde{g}+1,
\end{equation*}
translates into $(c_f,f),(c_g,g) \in \mathcal{S}$ and, by virtue of  the previous step, we find that $f$ and $g$ are   close to each other  in the $C^2(\T)$ topology, as well.

Notice that, from the definition of the cone \eqref{cone:def}, the only constant function in $\mathcal{K}$ is the zero function. Since $(c_f,\tilde{f}) \in \mathcal{T}$, then $\tilde{f}$ is not constant. Therefore, $f$ satisfies the conditions of  Lemma \ref{lem:strict_monotonicity}, which allows us to deduce that 
\begin{equation*}
	f' > 0 \quad \text{ on } (-\nicefrac{L}{2},0) \qquad \text{and }  \qquad f''(0)<0<f''(- \nicefrac{L}{2}).
\end{equation*}
Now, since $g$ is even and $L$-periodic,  then $g'(0)=g'(-\nicefrac{L}{2})=0$. Therefore, we deduce, by  Lemma \ref{lemma:stability:monotonicity}, that $g$ (as a sufficiently small $C^2$-perturbation of $f$ with the preceding properties) is non-decreasing on $(-\nicefrac{L}{2},0)$ as well.
\\
To conclude, we employ the fact that  $\tilde{f}(-\nicefrac{L}{2}) \tilde{f}(0) \leqslant 0$ to deduce that $f(x_0)=1$, for some $x_0\in \T$. Thus, recalling that $f \not\equiv 1$, Lemma \ref{lamma:sign} ensures that  
\begin{equation*}
	   \min_{x\in\T} f(x) < 1 < \max_{x \in \T} f(x) .
\end{equation*}
Accordingly, one can deduce, by taking   $\varepsilon>0$ small enough,   that 
$$\tilde{g}(-\nicefrac{L}{2}) \tilde{g}(0) \leqslant 0,$$ which, eventually, yields  that $\tilde{g} \in \mathcal{K}$. This completes the proof of the proposition. 
\end{proof}

\begin{proposition}[Excluding finitely-periodic loops]\label{prop:iii:fails}
	 In the notation of Theorem $\ref{thm:GB},$ the scenario (iii) does not occur and $f_s -1 \in \mathcal K \setminus \{0\}$ for any $s \in (0,\infty)$.
\end{proposition}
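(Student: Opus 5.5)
The plan is a connectedness argument along the parameter $s$, combining the local information from Proposition \ref{prop:transversality:c} with the openness property of Proposition \ref{prop:openness:cone}. The missing ingredient is a closedness property for the cone $\mathcal K$ away from the trivial solution.

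\textbf{Step 1: the branch starts inside the cone.} Set $\tilde f_s = f_s-1$. By Proposition \ref{prop:transversality:c}, $f_s$ is non-decreasing on $(-\nicefrac L2,0)$ for $s\in(0,\varepsilon)$. The expansion $f_s = 1+s\cos(\tfrac{2\pi}{L}\cdot)+\mathcal O(s^2)$ gives $\tilde f_s(0)>0>\tilde f_s(-\nicefrac L2)$ for small $s>0$. Hence $\tilde f_s\in\mathcal K\setminus\{0\}$ for $s\in(0,\varepsilon)$, possibly after shrinking $\varepsilon$. Define
\begin{equation*}
s^\ast \bydef \sup\{\sigma>0 : \tilde f_s\in \mathcal K\setminus\{0\} \text{ for all } s\in(0,\sigma)\}.
\end{equation*}
The goal is to show $s^\ast=\infty$.

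\textbf{Step 2: closedness.} Suppose $s^\ast<\infty$. The cone $\mathcal K$ is closed in $C(\T)$, since monotonicity and the sign condition $\tilde f(-\nicefrac L2)\tilde f(0)\leqslant 0$ both pass to uniform limits. By continuity of $s\mapsto f_s$, it follows that $\tilde f_{s^\ast}\in\mathcal K$.

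The main obstacle is to rule out $\tilde f_{s^\ast}=0$, i.e.\ a return of the branch to the trivial line. For this I would use Lemma \ref{lem:amplitude} at $s<s^\ast$, where $f_s$ is even, non-constant and non-decreasing on the half period. Since $(c_s,f_s)\in U_{\delta_0}$, we have $f_s(0)<a^*(c_s)$. We also need $a^*(c_s)\geqslant 1$. This holds because $\max f_s>1$ by Lemma \ref{lamma:sign} (as $f_s$ crosses $1$ and is non-constant), and $\max f_s<a^*(c_s)$. Lemma \ref{lem:amplitude} then gives $a^*(c_s)-f_s(-\nicefrac L2)\geqslant C(p,L)$.

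This bound alone does not exclude $f_s\to 1$, since $a^*(c_s)$ could stay away from $1$. So I would combine it with Lemma \ref{lem:strict_monotonicity}-type reasoning in the limit. If $f_{s}\to 1$ uniformly, then $\phi_s\to 0$ in $C^2$, and the equation forces $c_s\to$ some $\bar c$ with $(\bar c,1)$ a bifurcation point. Normalizing $(f_s-1)/\|f_s-1\|$ converges, via the linearized equation and Step 1 of Proposition \ref{prop:openness:cone}, to an element of the kernel of $\ud_f\mathcal F(\bar c,1)$. This element lies in the cone, hence is $\cos(\tfrac{2\pi n}{L}\cdot)$ non-decreasing on $(-\nicefrac L2,0)$, which forces $n=1$ and $\bar c=c_0$.

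By the uniqueness in Theorem \ref{thm:1}, the branch near $s^\ast$ coincides with the local curve $\{(\Psi(t),\Phi(t))\}$. Since $\tilde f$ is in the cone with $\tilde f(0)>0$, we must have $t>0$. The global branch of Theorem \ref{thm:GB} is injective away from a loop closure, so returning to the same half-curve $\mathcal R_\loc$ would close a loop; this is scenario (iii). I will handle it within the same argument: in the loop case, $(c_T,f_T)$ is approached from $t<0$ along the branch. Then $\tilde f=\Phi(t)-1$ has $\tilde f(0)<0$ with $\tilde f$ non-decreasing, which is impossible when $\tilde f\in\mathcal K\setminus\{0\}$ and $\tilde f$ is close to $t\cos$. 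Hence $\tilde f_{s^\ast}\neq 0$.

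\textbf{Step 3: openness and conclusion.} Since $\tilde f_{s^\ast}\in(\R\times\mathcal K)\cap\tilde{\mathcal R}\cap\mathcal T$, Proposition \ref{prop:openness:cone} gives a neighbourhood in $\tilde{\mathcal S}$ contained in $\R\times\mathcal K$. By continuity, $\tilde f_s\in\mathcal K\setminus\{0\}$ for $s$ slightly beyond $s^\ast$, contradicting maximality. Thus $s^\ast=\infty$.

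Finally, scenario (iii) would require $f_T\equiv 1$ at the finite time $T$, and the argument of Step 2 excludes exactly this. So (iii) fails, and $f_s-1\in\mathcal K\setminus\{0\}$ for all $s>0$.
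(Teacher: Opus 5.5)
Your argument is correct in outline, but it takes a different route from the paper. You are essentially reproving, in this concrete setting, the abstract result on global bifurcation in cones (Theorem \ref{thm:global:cone}, i.e.\ Buffoni--Toland's Theorem 9.2.2). The paper instead verifies that theorem's three hypotheses and cites it.

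The problem-specific inputs are the same in both proofs:
\begin{itemize}
\item your Step 1 is hypothesis (i);
\item your Step 3 is hypothesis (iii), via Proposition \ref{prop:openness:cone};
\item your identification of the kernel direction at a return point is exactly the paper's check of hypothesis (ii). There, an element of $\ker \ud_f\mathcal F(\bar c,1)\cap\mathcal K\setminus\{0\}$ must be $\tau\cos(\tfrac{2\pi}{L}\cdot)$ with $\tau>0$ and $\bar c=c_0$.
\end{itemize}
What your route adds is the generic machinery hidden inside the abstract theorem: closedness of $\mathcal K$, a normalization/compactness argument at a return to the trivial line, and the exclusion of a return through $\mathcal R^+$. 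The normalization works because $(c_{s^*},1)\in U_{\delta_0}$ forces $a^*(c_{s^*})>1$, hence $p'(1)-c_{s^*}^2\neq 0$. The limit of $(f_s-1)/\|f_s-1\|$ then exists by compactness of $(\id-\partial_x^2)^{-1}$, so the detour through Lemma \ref{lem:amplitude} is unnecessary. The paper's route is much shorter; yours is self-contained and shows where each ingredient enters.

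The one step you only assert is the loop step. You claim that ``returning to $\mathcal R_\loc$ would close a loop'' and that ``in the loop case $(c_T,f_T)$ is approached from $t<0$''. This is the delicate part of the abstract theorem. It does not follow from the statement of Theorem \ref{thm:GB}; it needs items (b) and (e) of Theorem \ref{thm:global:CR}. Your sketch also leaves implicit why $s^*$ should coincide with the loop period $T$.

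It can be closed directly as follows.
\begin{enumerate}
\item For $s<s^*$ close to $s^*$ you have shown $(c_s,f_s)=(c_{t_s},f_{t_s})$ with $t_s\in(0,\varepsilon)$ and $t_s\to 0$.
\item $t_s$ depends continuously on $s$, since $t\mapsto(\Psi(t),\Phi(t))$ is injective with continuous inverse near $0$, because $\Phi'(0)=\xi_0\neq0$.
\item By (b), $\ker \ud_f\mathcal F(c_s,f_s)=\{0\}$ for all $s$ in a punctured left-neighbourhood of $s^*$.
\item Item (e) then gives a loop of period $T$ with $s-t_s\in T\mathbb N$ for each such $s$. By continuity, $s-t_s=kT$ is constant there.
\item Letting $s\uparrow s^*$ gives $s^*=kT$, hence $t_s=s-kT<0$, a contradiction.
\end{enumerate}
With this made explicit, your proof is complete, and the final remark about scenario (iii) follows immediately from $s^*=\infty$.
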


\begin{proof} 
Our argument  is grounded on applying Theorem \ref{thm:global:cone} about the global bifurcation in cones to exclude scenario (iii) from the statement of Theorem \ref{thm:GB}. To that end, consider the cone  $\mathcal K$ given by \eqref{cone:def}, and we define
	\begin{equation*}
		\tilde{\mathcal R}_\loc \bydef \{(c,\tilde{f}), (c,\tilde{f}+1) \in  \mathcal R_\loc \},
	\end{equation*}		
where $\mathcal R_\loc$ is defined in \eqref{Rlocdef}.
By virtue of Proposition \ref{prop:transversality:c}, together with \eqref{local_bif}, we have that 	
\begin{equation*}
		\tilde{\mathcal R}_\loc \subset \mathbb R\times \mathcal K,
\end{equation*}	
for small $\varepsilon>0$, which establishes the validity of condition ($i$) from  Theorem \ref{thm:global:cone}. 
Moreover,  Proposition \ref{prop:openness:cone} establishes  condition ($iii$) from   Theorem \ref{thm:global:cone}, as well. 
	\\
We are thus only left with verifying the validity of   condition ($ii$) from Theorem \ref{thm:global:cone}, which requires to show that any   couple of elements $(c,\xi)\in \mathbb R\times C(\mathbb T)$ such that  
	\begin{equation*}
	\xi \in \ker (\ud_f\mathcal F (c,1))\cap \mathcal K	\setminus \{0\}
	\end{equation*}
	is uniquely determined by 
	\begin{equation*} 
		(c,\xi)= (c_0, \tau \xi_0), \quad \text{for some } \tau > 0,
	\end{equation*}
	where 
	\begin{equation}\label{c_0xi_0}
		c_0 =  \sqrt{p'(1)+\tfrac{1}{1+(\nicefrac {2\pi}{L})^2}} \qquad \text{ and } \qquad \xi_0 = \cos (\tfrac{2\pi }{L}\cdot)
	\end{equation}
	are the components of the bifurcation speed and the generator of the kernel associated with the linearized operator, previously established in the local bifurcation theory (see the proof of Theorem \ref{thm:1}). The point here is that, being in the cone $\mathcal K$ selects the sign of the coefficient $\tau$ in the above. Indeed, let $(c,\xi)\in \mathbb R \times C(\mathbb T)$ with  $	\xi \in \ker (\ud_f\mathcal F (c,1))\cap \mathcal K	\setminus \{0\}$. Then, by reproducing the spectral analysis of the linearized operator previously laid out in the proof of Theorem \ref{thm:1}, that 
	\begin{equation*}
		\left (p'(1) - c^2 + \frac{1}{1+(\nicefrac {2\pi k}{L})^2}\right) \xi_k=0, \quad \text{for all } k\in \mathbb N,
	\end{equation*}
	where 
	\begin{equation*}
		\xi(x) = \sum_{k\geqslant 0} \xi_k \cos \left (\tfrac{2\pi k x}{L}\right), \quad (\xi_k)_{k\in \mathbb N}\subset \mathbb R.
	\end{equation*}
	Since $\xi \neq 0$, then there exists $\tilde{k} \in \N $, such that 
	\begin{equation*}
				\xi(x) = \xi_{\tilde{k}} \cos (\tfrac{2\pi \tilde{k} x}{L}), \qquad c^2= p'(1)  + \frac{1}{1+ (\nicefrac {2\pi \tilde{k}}{L})^2}.
	\end{equation*}
As  $\xi$ must be non-decreasing on $(-\nicefrac{L}{2},0)$ as an element of the cone $ \mathcal K$, then we should have that $\tilde{k} = 1$ and  $\xi_{\tilde k} > 0$, thereby concluding the proof of \eqref{c_0xi_0}. The proof of the proposition is now completed. 
\end{proof}

\begin{proposition}[Excluding blow-up]\label{prop:i:fails}
	 In the notation of Theorem $\ref{thm:GB},$ the scenario (i) cannot   occur.
\end{proposition}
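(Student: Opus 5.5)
We must show that $|c_s| + \|f_s\|_{C(\mathbb T)}$ cannot tend to infinity, and in fact that both quantities stay uniformly bounded along the branch $\mathcal R$.

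The plan is to bound $c_s$ first and then deduce the bound on $f_s$. By Proposition \ref{prop:iii:fails}, for every $s>0$ we have $f_s-1\in\mathcal K\setminus\{0\}$, so $f_s\not\equiv 1$. Since $(c_s,f_s)\in U_{\delta_0}$, we also have $\delta_0< f_s< a^*(c_s)$ on $\mathbb T$. Proposition \ref{prop:bound:c}, applied with $\delta=\delta_0$, then gives $|c_s|\leqslant K_{\delta_0}$ for all $s>0$; otherwise it would force $f_s\equiv 1$. At $s=0$ the speed is just $c_0$. Hence $\sup_{s\geqslant0}|c_s|\leqslant \max\{K_{\delta_0},c_0\}$.

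For the profile, I would use the monotonicity of $a^*$ from Section \ref{section:pressure}. It gives $\|f_s\|_{C(\mathbb T)}\leqslant a^*(c_s)\leqslant a^*(\max\{K_{\delta_0},c_0\})$, uniformly in $s$. Alternatively, I could invoke the $L^\infty$ bound of Proposition \ref{prop:uniform_bound and Holder regularity}, whose constant depends only on $\delta_0$. Either way $|c_s|+\|f_s\|_{C(\mathbb T)}$ is bounded uniformly in $s$, so the limit in scenario (i) cannot be $+\infty$.

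The only delicate point is the use of Proposition \ref{prop:bound:c}. It requires the lower bound $f\geqslant\delta$ with $\delta$ fixed independently of $s$; this holds with $\delta=\delta_0$ from Proposition \ref{prop:delta:min}, which is built into $U_{\delta_0}$. It also requires nontriviality of $f_s$, which comes from the cone property in Proposition \ref{prop:iii:fails}. If one prefers not to rely on the cone, one can argue directly: the points of $\mathcal R$ with $f_s\equiv1$ are isolated bifurcation points, and trivial solutions $(c,1)$ with large $c$ are bounded in $\mathbb R\times C(\mathbb T)$ only through $c$. Using the cone avoids this case distinction entirely.
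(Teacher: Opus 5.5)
Your proof is correct and uses the same ingredients as the paper: nontriviality of $f_s$ for $s>0$ from Proposition \ref{prop:iii:fails}, Proposition \ref{prop:bound:c} with $\delta=\delta_0$ to bound $c_s$, and a uniform bound on $\|f_s\|_{C(\mathbb T)}$. The paper takes that last bound from Proposition \ref{prop:uniform_bound and Holder regularity}, while you obtain it equivalently from $f_s<a^*(c_s)$ and the monotonicity of $a^*$; your explicit handling of $s=0$ is a small precision the paper glosses over, and your closing remark about avoiding the cone is not a complete argument on its own but plays no role in the proof.
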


\begin{proof}
Having Propositions \ref{prop:bound:c} and \ref{prop:uniform_bound and Holder regularity} in hand, we deduce that $(f_s)_{s \geqslant 0}$ is uniformly bounded in $C(\T)$.
	On the other hand,   Proposition \ref{prop:iii:fails}, ensures that  $f_s$ does not return the equilibrium ``$1$'' as soon as $s\in  (0,\infty)$. Therefore, using Proposition \ref{prop:bound:c}, anew, we obtain that $|c_s| \leqslant K_{p,\delta}$ for any $s \geqslant0$. This concludes the proof the boundedness of $(c_s,f_s)_{s\in [0,\infty)}$ in $\mathbb R \times C(\mathbb T)$.
\end{proof}

\subsection{Singular waves}

From Proposition \ref{prop:iii:fails} and Proposition \ref{prop:i:fails}, we deduce that the scenario $(ii)$ from Theorem \ref{thm:GB} must occur. We prove in  what follows that the limiting profile  of the global bifurcation curve is a singular solution to \eqref{main_EQ} in the sense that it has an exact global Lipschitz regularity.

\begin{proposition}[Touching the boundary at the maximum]\label{prop:exit} 
Up to a subsequence,  the family of solutions $(c_s,f_s)_{s \in [0,\infty)}$ converges, as $s \to \infty$, to some couple $(c_\infty,f_\infty)$ in $\R \times C(\T)$. Moreover, $f_\infty$ is even, Lipschitz, increasing on $(-\nicefrac{L}{2},0)$ and of class $C^\infty$ on $(-\nicefrac{L}{2},0)$. In addition, it holds that   
\begin{equation*}
	0 < \delta_0 < f_\infty(-\nicefrac{L}{2})<1<f_\infty(0)=c^*(c_\infty).
\end{equation*} 
\end{proposition}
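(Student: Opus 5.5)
Note first that $c^*(c_\infty)$ in the statement must mean $a^*(c_\infty)$. The plan is to combine compactness along the branch with the fact that scenario (ii) of Theorem \ref{thm:GB} is the only one left, and then to identify which part of $\partial U_{\delta_0}$ is reached.

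\emph{Compactness and limit.} By Proposition \ref{prop:i:fails} and its proof, $(c_s,f_s)$ is bounded in $\R\times C(\T)$, with $|c_s|\leqslant K_{\delta_0}$. By Proposition \ref{prop:uniform_bound and Holder regularity}, the $f_s$ are uniformly bounded in $C^{1/2}(\T)$. Arzel\`a--Ascoli then gives a sequence $s_n\to\infty$ with $(c_{s_n},f_{s_n})\to(c_\infty,f_\infty)$ in $\R\times C(\T)$.

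\emph{Passage to the limit in the equation.} The stability estimate from Step 1 of Proposition \ref{prop:openness:cone} (or Proposition \ref{prop:elliptic}) gives $\mathcal H^{-1}(f_{s_n})\to\mathcal H^{-1}(f_\infty)$ in $C^2$. Together with $\delta_0<f_{s_n}$, this lets us pass to the limit and conclude that $(c_\infty,f_\infty)$ solves \eqref{main_EQ}. Closed conditions survive the limit: $f_\infty$ is even and non-decreasing on $(-\nicefrac L2,0)$, since $f_s-1\in\mathcal K$ by Proposition \ref{prop:iii:fails}, and $2\delta_0\leqslant f_\infty\leqslant a^*(c_\infty)$ by Proposition \ref{prop:delta:min}. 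For the latter we need $a^*(c_s)\geqslant 1$; this holds because $f_s-1$ changes sign (Lemma \ref{lamma:sign}), so $\max f_s>1$.

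\emph{Identifying the boundary contact; this is the main obstacle.} Scenario (ii) says $\operatorname{dist}((c_s,f_s),\partial U_{\delta_0})\to0$. The lower part of the boundary is excluded because $\min f_s\geqslant2\delta_0$. Hence $\max f_{s}-a^*(c_{s})\to0$, and since the maximum sits at $0$ by monotonicity, $f_\infty(0)=a^*(c_\infty)$. The subtle point is that this must hold along our particular subsequence. I would handle it by first choosing $s_n$ so that the distance to the boundary tends to zero, and only then extracting. Alternatively, one can argue by contradiction: if $f_\infty(0)<a^*(c_\infty)$, then the limit lies in $U_{\delta_0}$, and the local analytic structure near an interior limit point, together with compactness (Lemma \ref{lem:compactness:brach}), forces the branch to stay in a compact subset of the interior, contradicting (ii).

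\emph{Nondegeneracy of the limit.} Next I must show $f_\infty$ is non-constant. I would use Lemma \ref{lem:amplitude}, which gives $a^*(c_s)-f_s(-\nicefrac L2)\geqslant C(p,L)>0$; passing to the limit yields $f_\infty(-\nicefrac L2)<f_\infty(0)$. The limit is also in the cone: $f_\infty-1\in\mathcal K\setminus\{0\}$, since $\mathcal K$ is closed. In particular $f_\infty(-\nicefrac L2)\leqslant1\leqslant f_\infty(0)$. Since $f_\infty$ is non-constant, Lemma \ref{lamma:sign} makes both inequalities strict, once we know $f_\infty$ takes the value $1$, which follows by continuity from the cone condition.

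\emph{Monotonicity, regularity and Lipschitz bound.} Proposition \ref{prop:smoothness:1} gives $C^\infty$ smoothness wherever $\delta_0<f_\infty<a^*(c_\infty)$. By Lemma \ref{lem:strict_monotonicity}, applied after knowing $f_\infty$ is $C^1$ away from $L\Z$ (which follows from smoothness on that open set), we get $f_\infty<a^*(c_\infty)$ and $f_\infty'>0$ on $(-\nicefrac L2,0)$. This yields strict increase and smoothness on $(-\nicefrac L2,0)$. Finally, Proposition \ref{pro:Lipregularity}, whose hypotheses hold because $f_\infty(0)=a^*(c_\infty)$, gives the Lipschitz bound.
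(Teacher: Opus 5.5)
Your proposal follows the paper's proof essentially step for step. The steps are: boundedness plus Arzel\`a--Ascoli; scenario (ii), with the lower boundary excluded by Proposition~\ref{prop:delta:min}; non-constancy from Lemma~\ref{lem:amplitude}; closedness of $\mathcal K$ together with Lemma~\ref{lamma:sign} for the strict inequalities; and Propositions~\ref{prop:smoothness:1}, \ref{pro:Lipregularity} with Lemma~\ref{lem:strict_monotonicity} for regularity and monotonicity. You are more careful than the paper in two places. First, you choose the subsequence along which the distance to $\partial U_{\delta_0}$ tends to zero. Second, you check $a^*(c_s)>1$, which Proposition~\ref{prop:delta:min} and Lemma~\ref{lem:amplitude} require. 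Keep that first route. The ``alternative'' contradiction argument is not justified as stated: a single interior accumulation point does not confine the tail of the branch to a compact subset of $U_{\delta_0}$.

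The step that fails as written is in your last paragraph. You get $f_\infty\in C^1(\R\setminus L\Z)$ from Proposition~\ref{prop:smoothness:1}, but that proposition only gives smoothness on $\{\delta_0<f_\infty<a^*(c_\infty)\}$. Knowing that this set contains $(-\nicefrac{L}{2},0)$ is exactly the conclusion $f_\infty<a^*(c_\infty)$ that you then extract from Lemma~\ref{lem:strict_monotonicity}, so the argument is circular. The paper glosses over the same point.

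The fix is to run the lemma's argument without differentiating $f_\infty$.
\begin{itemize}
\item Since $\mathcal G_{c_\infty}$ is decreasing on $(0,a^*(c_\infty)]$, the function $\phi_\infty=-\mathcal G_{c_\infty}(f_\infty)$ is non-decreasing on $(-\nicefrac{L}{2},0)$.
\item For $\lambda\geqslant \mathrm{e}^{\sup\phi_\infty}$, the function $h=\lambda\phi_\infty-\partial_x^2\phi_\infty=\lambda\phi_\infty-\mathrm{e}^{\phi_\infty}+f_\infty$ is even, non-constant and non-decreasing on $(-\nicefrac{L}{2},0)$. Lemma~\ref{parity_elliptic} then gives $\phi_\infty'>0$ there.
\item If $f_\infty(x_1)=a^*(c_\infty)$ for some $x_1\in(-\nicefrac{L}{2},0)$, monotonicity forces $f_\infty\equiv a^*(c_\infty)$ on $[x_1,0]$. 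Then $\phi_\infty$ is constant there, which is a contradiction.
\end{itemize}
Hence $\delta_0<f_\infty<a^*(c_\infty)$ on $\R\setminus L\Z$; at $\pm\nicefrac{L}{2}$ this uses $f_\infty(-\nicefrac{L}{2})<1$. Proposition~\ref{prop:smoothness:1} now legitimately yields $C^\infty$ smoothness there. Finally, $\mathcal G_{c_\infty}'(f_\infty)f_\infty'=-\phi_\infty'<0$ gives $f_\infty'>0$ on $(-\nicefrac{L}{2},0)$.
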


\begin{proof}
A direct consequence of Theorem \ref{thm:GB}, Propositions \ref{prop:iii:fails} and   \ref{prop:i:fails} is that $(c_s,f_s)_{s \in [0,\infty)}$ approaches the boundary of $U_{\delta_0}$, i.e., either 
\begin{equation*}
	\delta_0 = \min_{x \in \T} f_\infty(x) \qquad \text{or} \qquad a^*(c_\infty) = \max_{x \in \T} f_\infty(x).
\end{equation*}
Since $f_s$ is non-decreasing on $(-\nicefrac{L}{2},0)$, for any $s\in[0,\infty)$, then $f_\infty$ is non-decreasing as well. Also, we know from Proposition \ref{prop:delta:min}, that $f_\infty(x) \geqslant 2 \delta_0$, which insures the limiting profile $f_\infty$ touches the boundary of $U_{\delta}$ from above, i.e., that 
\begin{equation*}
	f_\infty(0) = a^*(c_\infty).
\end{equation*}
Using Lemma \ref{lem:amplitude}, we obtain, for any $s \in (0,\infty)$, that 
\begin{equation*}
	a^*(c_s) - f_s(-\nicefrac{L}{2}) \gtrsim_{L} 1,
\end{equation*}
where the constant in the last inequality is independent of $s$, $f_s$ and $c_s$. 
Taking the limit $s \to \infty$ (up to a subsequence), we deduce that  
\begin{equation*}
f_\infty(0)- f_\infty(-\nicefrac{L}{2})  =	a^*(c_\infty) - f_\infty(-\nicefrac{L}{2}) > 0,
\end{equation*}
which implies that $f_\infty$ is not a constant function. 
From the definition of the cone \eqref{cone:def}, and since $f_s-1 \in \mathcal{K}$, for any $s\in (0,\infty)$, we deduce that 
\begin{equation*}
f_\infty(-\nicefrac{L}{2}) \leqslant 1 \leqslant	f_\infty(0). 
\end{equation*}
The inequalities above are in fact strict due to Lemma \ref{lamma:sign}.
Further using Propositions \ref{prop:smoothness:1} and   \ref{pro:Lipregularity}, we obtain the $C^\infty$ regularity away from the maximal value and the global Lipschitz regularity.
Finally, the strict monotonicity on the half period follows from Lemma \ref{lem:strict_monotonicity}. This completes the proof of the proposition.
\end{proof}

We are now in a position to conclude the discussion of the global bifurcation analysis. For the sake of completeness, we summarize by establishing the main results of the paper, as a consequence of the previous findings from this section.

\begin{proof}[Proof of Theorem $\ref{thm:BifCurv}$] 
The existence of the global curve of bifurcation, together with most of its properties, follow directly  from Theorem \ref{thm:GB}. Moreover, the fact that each $f_s$, for any $s\in (0,\infty)$, is increasing on the half-period is a consequence of Proposition \ref{prop:iii:fails} and Lemma \ref{lem:strict_monotonicity}.
\end{proof}

\begin{proof}[Proof of Theorem $\ref{thm:LimitObject}$]
	This  follows from Propositions \ref{pro:Lipregularity} and \ref{prop:exit}.
\end{proof}

\appendix
\section{Bifurcation Toolkit} For convenience, we collect in this section the core fundamental theorems on local and global bifurcation that we employ throughout the paper. 

The first theorem is due to Crandall and Rabinowitz \cite{CR71}. The version of the statement below is taken from \cite{BJ03}.
\begin{theorem}[Crandall--Rabinowitz]\label{thm:CR} 
	Let $X$ and $Y$ be Banach spaces, and $\mathcal F: \mathbb R\times X\to Y$ be of class $C^k$, for some $k\geqslant 2$, satisfying $\mathcal F(\lambda,x_0)=0$,  for some $x_0\in X$ and all $\lambda\in \mathbb R$. Assume further, for some $\lambda_0\in \mathbb R$, that 
	\begin{enumerate}[label=$(\roman*)$]
		\item The linearized operator $\mathcal L\bydef \partial_x \mathcal F[(\lambda_0,x_0)]$ is Fredholm of index zero.
		\item The kernel of $L$ is one dimensional, generated by some element $\xi_0\in X$.
		\item The transversality condition
		\begin{equation*}
			 \partial^2_{\lambda,x} \mathcal F[(\lambda_0,x_0)] (1,\xi_0) \notin Range (\mathcal L)
		\end{equation*}
		holds. 
	\end{enumerate}
	Then, $(\lambda_0,x_0)$ is a bifurcation point. More precisely, there exists $\varepsilon>0$ and a branch of solutions given by 
	\begin{equation*}
		\left \{ (\lambda,x)= (\Psi(s),\Phi(s)) \mid s\in I_\varepsilon\bydef  (-\varepsilon,\varepsilon)\right\} \subset \mathbb R \times X ,
	\end{equation*}
	where $\Psi,\Phi\in C^{k-1}(I_\varepsilon)$ and satify that 
	\begin{equation*}
		\Psi(0)= \lambda_0 \qquad \text{and} \qquad  \Phi'(0)=\xi_0.
	\end{equation*}
	Moreover, 
	there exists an open set $U_0 \subset \mathbb R\times X$, with $(\lambda_0,x_0) \in U_0$, such that 
	\begin{equation*}
		\left \{   (\lambda,x)\in U_0 \mid  \mathcal F(\lambda,x)=0 \text{ and } x\not\equiv x_0\right\}=\left \{  (\Psi(s),\Phi(s)) \mid s\in I_\varepsilon \text{ and }s \neq 0\right\}  .
	\end{equation*}
	 If, furthermore, $\mathcal F$ is analytic, then $\Psi$ and $\Phi$ are analytic functions on $I_\varepsilon$. 
\end{theorem}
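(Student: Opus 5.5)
The plan is to use the classical Lyapunov--Schmidt reduction combined with a rescaling (``blow-up'') of the equation along the kernel direction, and then apply the implicit function theorem. Since $\mathcal L$ is Fredholm of index zero and $\ker\mathcal L=\langle\xi_0\rangle$, the range $\operatorname{Range}(\mathcal L)$ is closed and of codimension one. I will fix closed complements
\begin{equation*}
X=\langle \xi_0\rangle\oplus X_1, \qquad Y=\operatorname{Range}(\mathcal L)\oplus\langle \eta_0\rangle,
\end{equation*}
and let $Q$ denote the associated projection onto $\operatorname{Range}(\mathcal L)$. The operator $\mathcal L|_{X_1}:X_1\to \operatorname{Range}(\mathcal L)$ is then a bounded bijection, hence an isomorphism by the open mapping theorem.

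\textbf{Step 1: the rescaled map.} I look for solutions of the form $x=x_0+s(\xi_0+w)$ with $w\in X_1$. Using $\mathcal F(\lambda,x_0)=0$, I define
\begin{equation*}
\mathcal G(\lambda,s,w)\bydef \int_0^1 \partial_x\mathcal F\big(\lambda,x_0+ts(\xi_0+w)\big)(\xi_0+w)\,\ud t,
\end{equation*}
which equals $s^{-1}\mathcal F(\lambda,x_0+s(\xi_0+w))$ for $s\neq0$. This map is of class $C^{k-1}$ from $\mathbb R\times\mathbb R\times X_1$ to $Y$, which is the source of the $C^{k-1}$ regularity in the conclusion. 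At the base point, $\mathcal G(\lambda_0,0,0)=\mathcal L\xi_0=0$. Its derivative with respect to $(\lambda,w)$ at $(\lambda_0,0,0)$ is
\begin{equation*}
(\mu,v)\mapsto \mu\,\partial^2_{\lambda,x}\mathcal F[(\lambda_0,x_0)](1,\xi_0)+\mathcal L v .
\end{equation*}

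\textbf{Step 2: invertibility and the branch.} The derivative above is injective and surjective from $\mathbb R\times X_1$ onto $Y$. This follows from the isomorphism $\mathcal L|_{X_1}$ onto the range, together with the transversality condition (iii): the transversal vector $\partial^2_{\lambda,x}\mathcal F[(\lambda_0,x_0)](1,\xi_0)$ spans a complement of the codimension-one range. The implicit function theorem then yields $C^{k-1}$ maps $s\mapsto(\Psi(s),W(s))$ on some $I_\varepsilon$ with $\Psi(0)=\lambda_0$ and $W(0)=0$. Setting $\Phi(s)=x_0+s(\xi_0+W(s))$ gives $\Phi'(0)=\xi_0$. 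If $\mathcal F$ is analytic, then $\mathcal G$ is analytic, and the analytic implicit function theorem gives analyticity of $\Psi$ and $\Phi$.

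\textbf{Step 3: local uniqueness (main obstacle).} I must show that every zero $(\lambda,x)$ near $(\lambda_0,x_0)$ with $x\neq x_0$ lies on the branch. The difficulty is that the implicit function theorem for $\mathcal G$ only controls small $w$, so a general solution must first be shown to have that form.
\begin{enumerate}
\item Write $x-x_0=s\xi_0+z$ with $z\in X_1$.
\item Solve the projected equation $Q\mathcal F(\lambda,x_0+s\xi_0+z)=0$ for $z$ by the implicit function theorem, using that $Q\mathcal L|_{X_1}$ is invertible. This gives a unique small solution $z=z(\lambda,s)$.
\item Since $\mathcal F(\lambda,x_0)=0$, uniqueness forces $z(\lambda,0)=0$. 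Hence $z(\lambda,s)=s\int_0^1\partial_s z(\lambda,ts)\,\ud t$, and $w\bydef z/s$ satisfies $\|w\|\lesssim\|\partial_s z\|$.
\item The key point is that $\partial_s z(\lambda_0,0)=-(Q\mathcal L|_{X_1})^{-1}Q\mathcal L\xi_0=0$. So $w$ is as small as we like after shrinking the neighbourhood $U_0$.
\item For $s\neq0$, the full equation is then equivalent to $\mathcal G(\lambda,s,w)=0$ with $(\lambda-\lambda_0,s,w)$ small. The uniqueness part of Step 2 forces $\lambda=\Psi(s)$ and $w=W(s)$.
\item Finally, $s=0$ with $x\neq x_0$ is impossible: it would give $z\neq0$, contradicting $z(\lambda,0)=0$.
\end{enumerate}
This identifies the zero set in $U_0$ as stated.
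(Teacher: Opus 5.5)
Your proposal is correct. It is the classical Crandall--Rabinowitz argument: blow up via $s^{-1}\mathcal F(\lambda,x_0+s(\xi_0+w))$, then apply the implicit function theorem in $(\lambda,w)$, then a Lyapunov--Schmidt step for local uniqueness. The paper does not prove this theorem; it quotes it from \cite{BJ03}, going back to \cite{CR71}, so there is no proof in the paper to compare against.

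The only bookkeeping you leave implicit concerns the choice of $U_0$. Take $U_0$ to be a product neighbourhood in the coordinates $(\lambda,s,z)$ whose $s$-range is exactly $I_\varepsilon$. Shrink $\varepsilon$ so that $|\Psi(s)-\lambda_0|$ and $\|sW(s)\|$ stay inside that neighbourhood for $|s|<\varepsilon$. This gives the reverse inclusion. Note also that $\Phi(s)\neq x_0$ for $s\neq 0$, because $\xi_0\notin X_1$.
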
 
 
From now on, in view of the notations from the preceding theorem, we further assume that $\mathcal F$ is analytic on some open set $U\subset \mathbb R\times X$. The extension of a local branch of bifurcation is based on the following items: 
\begin{itemize}
	\item[(A1)] For any $\lambda\in \mathbb R$, it holds that 
	 \begin{equation*}
	 	(\lambda,x_0)\in U \qquad \text{and} \qquad \mathcal F(\lambda,x_0)=0.
	 \end{equation*}
	 \item[(A2)] For any $(\lambda,x)\in \mathbb R\times X$ such that $ \mathcal F(\lambda,x)=0$, it holds that $ \partial_x\mathcal F[(\lambda,x)]$ is a Fredholm operator of index zero.
	 \item [(A2)] For some $\lambda_0\in \mathbb R$ and $\xi_0\in X$, it holds that 
	 \begin{equation*}
	 	\ker \left( \partial_x\mathcal F[(\lambda_0,x_0)] \right) =\{s\xi_0 \mid s\in \mathbb R \},
	 \end{equation*}
	 and
	 \begin{equation*}
	 	\partial^2_{\lambda,x} \mathcal F[(\lambda_0,x_0)] (1,\xi_0) \notin Range \left( \partial_x\mathcal F[(\lambda_0,x_0)] \right) .
	 \end{equation*}
\end{itemize}
Given the above assumptions, and based on the (analytic) local branch of bifurcation constructed in Theorem \ref{thm:CR}, we now introduce the set corresponding to the (strict-right) local curve of bifurcation 
\begin{equation*}
	\mathcal R^+\bydef \left \{  (\Psi(s),\Phi(s)) \mid s\in (0,\varepsilon)\right\},
\end{equation*}
the set of all zeros of $\mathcal F$ in the open set $U$
\begin{equation*}
	\mathcal S\bydef \left \{  (\lambda,x)\in U \mid \mathcal F(\lambda,x)=0\right\}
\end{equation*}
and the set of non-trivial zeros of $\mathcal F$ in the open set $U$
\begin{equation*}
	\mathcal T \bydef \left \{  (\lambda,x)\in \mathcal S \mid x\neq x_0\right\}.
\end{equation*}
Here, we agree that $\varepsilon$ is being taken sufficiently small in a way that 
\begin{equation*}
	\mathcal R^+\subset \mathcal T \quad \text{ and } \quad \Phi'(s)\neq 0,
\end{equation*}
for all $s\in (-\varepsilon,\varepsilon)$.

The next theorem is about the existence of a global branch of bifurcation, extending $\mathcal R^+$. Its statement is taken from \cite[Theorem 9.1.1]{BJ03}. 
\begin{theorem}[Global one-dimensional branches]\label{thm:global:CR}
	Assume the validity of (A1)--(A3) above, and suppose further that 	\begin{enumerate}[label=$(\roman*)$]
		\item Transversality in the direction of $\Psi$: $ \Psi'\not\equiv 0$ on $(-\varepsilon,\varepsilon)$.
		\item Compactness within $\mathcal S$: any bounded closed subsets of $\mathcal S$ are compact.
	\end{enumerate}
	Then, the following hold:
	\begin{enumerate}[label=$(\alph*)$]
		\item Existence of a continuous extension of  $\mathcal R^+$: There are continuous extensions  of the parametrization in the sense that  $ (\Psi,\Phi):[0,\infty)\to \mathbb R\times X$ such that  
		\begin{equation*}
			\mathcal R^+ \subset \mathfrak R \bydef \left \{  (\Psi(s),\Phi(s)) \mid s\in [0,\infty)\right\} \subset S\cap U.
		\end{equation*}
		\item The set $\{  s\geqslant 0 : \ker (\partial_x \mathcal F[(\Psi(s),\Phi(s)]) \neq \{ 0 \}\}$ has no accumulation points.
		\item At each point, $\mathfrak R$ has a local analytic re-parametrization.
		\item One of the following scenarios occurs:
			\begin{enumerate}[label=(\arabic*)]
				\item Blow up:
					\begin{equation*}
					\lim_{s\to \infty}	\norm{(\Psi(s),\Phi(s))}=\infty.
					\end{equation*}
				\item  $(\Psi(s),\Phi(s))$ approaches the boundaries of $U$ as $s$ tends to $\infty$.
				\item $\mathfrak R$ is a closed loop in the sense that 
					\begin{equation*}
						\mathfrak R= \left \{  (\Psi(s),\Phi(s)) \mid s\in [0,T]\right\} ,
					\end{equation*}
					for some $T>0$, and $ (\Psi(T),\Phi(T)) = (\lambda_0,x_0)$. Here, we may assume that $T$ denotes the smallest period of the map $s\mapsto (\Psi(s),\Phi(s)).$
			\end{enumerate}
		\item Sufficient condition for loops to occur: If one has that 
			\begin{equation*}
				(\Psi(s_1),\Phi(s_1))= (\Psi(s_2),\Phi(s_2)), \quad \text{ and } \quad \ker (\partial_x \mathcal F[(\Psi(s_1),\Phi(s_1)]) =\{ 0 \},
			\end{equation*}
			for some $s_1\neq s_2$, then the loop scenario occurs, and $|s_1-s_2|$ is an integer multiple of $T$. 
	\end{enumerate}
\end{theorem}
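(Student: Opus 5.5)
The plan is to reproduce the analytic global continuation argument of Buffoni and Toland (the source of \cite[Theorem 9.1.1]{BJ03}). Rather than a degree argument, it uses the local structure of real-analytic varieties, made available by the Fredholm property (A2). The first step is a \emph{local reduction}. At any $(\lambda_1,x_1)\in\mathcal S$, assumption (A2) gives a finite-dimensional kernel $N$ of $\partial_x\mathcal F[(\lambda_1,x_1)]$ and a complement $Y_1$ of its range of the same dimension. A Lyapunov--Schmidt reduction with the analytic implicit function theorem then represents $\mathcal S$ near $(\lambda_1,x_1)$ as the graph, over the zero set of an analytic map $\mathbb R\times N\to Y_1$, of an analytic function. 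So $\mathcal S$ is locally a finite-dimensional real-analytic variety. At points where $\partial_x\mathcal F$ is invertible, the implicit function theorem shows directly that $\mathcal S$ is an analytic curve, parametrized by $\lambda$.

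The second step is to build the \emph{distinguished arcs}. Using the structure theory of one-dimensional analytic varieties (Puiseux expansions, or \L ojasiewicz's structure theorem), I would show the following local picture near each point of $\mathcal S$ lying on a one-dimensional piece: finitely many analytic arcs emanate from it, each admitting a local analytic reparametrization. Starting from $\mathcal R^+$, I define $\mathfrak R$ by continuation. Along an arc, I move until reaching a point where the kernel is nontrivial. There, among the finitely many emanating arcs, I select a distinguished one that continues the route. This choice is made through an analytic invariant, for instance the parity of the number of arcs, or the choice that makes the route locally an embedded analytic curve after reparametrization. Hypothesis (i), $\Psi'\not\equiv 0$, guarantees that the starting branch is not contained in the trivial line, so the route leaves $\{x=x_0\}$. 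Since the points with nontrivial kernel are isolated on each analytic arc (a determinant-type analytic function of the reduced problem does not vanish identically), I obtain (b) and (c), and a continuous parametrization on $[0,\infty)$ after rescaling so that each arc uses a bounded parameter interval, giving (a).

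The third step is the \emph{alternative} (d) and statement (e). Suppose neither (1) nor (2) occurs. Then some sequence $s_n\to\infty$ keeps $(\Psi(s_n),\Phi(s_n))$ bounded and at positive distance from $\partial U$. By hypothesis (ii), a subsequence converges to a point $q\in\mathcal S$. Near $q$ only finitely many arcs pass, so the route visits one of them infinitely often. By the uniqueness of distinguished continuation, the route must then retrace itself. This yields $s_1\ne s_2$ with equal points at a regular parameter, so the route is periodic with some minimal period $T$; this is (e). It remains to see that the loop passes through $(\lambda_0,x_0)$. Running the continuation backwards from $\mathcal R^+$ must return to the starting arc, and the only arc through $(\lambda_0,x_0)$ off the trivial line is the Crandall--Rabinowitz curve of Theorem \ref{thm:CR}, by the local uniqueness in that theorem. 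Hence $(\Psi(T),\Phi(T))=(\lambda_0,x_0)$.

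I expect the main obstacle to be the second step: choosing the distinguished arc at singular points of $\mathcal S$ in a way that is intrinsic and invariant under reversal of the route. This is exactly what makes the retracing argument of the third step work. For this I would follow the Buffoni--Toland construction closely, using the fact that a one-dimensional analytic variety has an even number of half-branches at each point, so that a consistent pairing of half-branches exists.
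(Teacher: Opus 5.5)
The paper does not prove this theorem. It quotes it from \cite[Theorem 9.1.1]{BJ03}, and your outline (Lyapunov--Schmidt reduction, distinguished arcs, continuation through singular points, compactness plus retracing) is the Buffoni--Toland argument. There is, however, a genuine gap in your treatment of hypothesis (i) and statement (b).

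You say (i) ensures that the branch leaves $\{x=x_0\}$. That is automatic: Theorem~\ref{thm:CR} gives $\Phi(s)=x_0+s\xi_0+o(s)$ with $\xi_0\neq0$. You then justify (b) by saying that on each analytic arc a determinant-type function ``does not vanish identically''. This is false in general. If $\Psi'\equiv0$, differentiating $\mathcal F(\Psi(s),\Phi(s))=0$ gives $\partial_x\mathcal F[(\Psi(s),\Phi(s))]\Phi'(s)=0$ with $\Phi'(s)\neq0$. So every point of $\mathcal R^+$ has a nontrivial kernel, (b) fails, and your continuation has no regular arc to start from.

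The actual role of (i) is to make $\mathcal R^+$ consist of regular points. Reduce at $(\lambda_0,x_0)$ to a scalar equation $g(\lambda,t)=0$, with $t$ the coordinate along $\xi_0$. By (A1), $g=t\,h$; transversality gives $\partial_\lambda h(\lambda_0,0)\neq0$; the branch satisfies $h(\Psi(t),t)=0$; and at a nearby zero, $\partial_x\mathcal F$ has a nontrivial kernel iff $\partial_t g=0$ there. On the branch, $\partial_t g=t\,\partial_t h=-t\,\Psi'(t)\,\partial_\lambda h(\Psi(t),t)$. Analyticity and (i) therefore give invertibility on $\mathcal R^+$ for $\varepsilon$ small.

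Beyond $\mathcal R^+$, isolation of singular parameters is not a property of arbitrary arcs; it has to be propagated. Distinguished arcs are regular by definition. Across a junction, the determinant along the continued analytic curve is analytic and nonzero on the incoming side, so its zeros are isolated.

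Your continuation rule is also too loose to deliver (c) and the retracing in your third step. ``Parity of the number of arcs'' selects nothing. An abstract pairing of half-branches obtained from their even number gives neither a local analytic reparametrization at junctions nor the uniqueness of continuation you later invoke. Moreover, near a junction $\mathcal S$ need not be one-dimensional. The relevant object is the closure of the regular set $\mathcal A=\{(\lambda,x)\in\mathcal S:\ker\partial_x\mathcal F[(\lambda,x)]=\{0\}\}$, which is semianalytic.

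The pairing that works is analytic continuation:
\begin{itemize}
\item the incoming half-branch has an analytic parametrization $\rho$ on $(-1,0]$, ending at the junction, which extends to an injective analytic map on $(-1,1)$;
\item $\mathcal F\circ\rho\equiv0$ by the identity theorem;
\item $\rho((0,\eta))\subset\mathcal A$ by the determinant argument above;
\item the next arc is the distinguished arc containing $\rho((0,\eta))$.
\end{itemize}
This pairing is an involution on half-branches. That is what gives (c) and periodicity once an arc recurs. By descending induction over repeated arcs, it also shows that the loop enters $(\lambda_0,x_0)$ along $\{(\Psi(s),\Phi(s)):s\in(-\varepsilon,0)\}$.

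Finally, in (d), ``neither (1) nor (2)'' yields a single sequence $s_n\to\infty$ that is bounded and uniformly away from $\partial U$ only if (2) is read as $\liminf_{s\to\infty}\mathrm{dist}((\Psi(s),\Phi(s)),\partial U)=0$. What the construction actually proves, absent a loop, is $\norm{(\Psi(s),\Phi(s))}+1/\mathrm{dist}((\Psi(s),\Phi(s)),\partial U)\to\infty$.
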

It turns out that there is a general setting in which   loops can be ruled out from the general picture of Theorem \ref{thm:global:CR}. This particularly involves the case of bifurcations in cones. 
 This is the content of the next theorem, the statement of which is taken from \cite[Theorem 9.2.2]{BJ03}. 
 
 For convenience, we recall  that a set $\mathcal K$ from a Banach space $X$ is called a cone centered at $x_0\in X$ if it holds that 
\begin{equation*}
	\alpha(x-x_0)\in \mathcal K, \quad \text{ for all } x\in \mathcal K \text{ and any } \alpha \geqslant 0.
\end{equation*}
\begin{theorem}[Global analytic bifurcation in cones]\label{thm:global:cone}
	In addition to the hypotheses of Theorem $\ref{thm:global:CR},$ we assume further that 
	\begin{enumerate}[label=$(\roman*)$]
		\item There is a cone $\mathcal K$ in $X$, centered at $x_0$, such that 
		\begin{equation*}
			\mathcal R^+\subset \mathbb R \times \mathcal K,
		\end{equation*}
		for $\varepsilon$ being taken sufficiently small
		\item If $\lambda \in \mathbb R$ and $\xi \in \ker (\partial_x \mathcal F[(\lambda,x_0)])$, then one necessarily has that 
		\begin{equation*}
			\xi = \alpha \xi_0, \quad \text{ and } \quad \lambda=\lambda_0,
		\end{equation*}
		for some $\alpha \geqslant 0 $.
		\item Each point of $\mathfrak R \cap \mathcal T \cap \left( \mathbb R \times \mathcal K\right)$ is an interior point of $\mathcal T \cap \left( \mathbb R \times \mathcal K\right)$ in $\mathcal S$.
	\end{enumerate}
	Then, it holds that $\Phi(s)\in \mathcal K\setminus \{0\}$ for all $s>0$, and the loop scenario from Theorem $\ref{thm:global:CR}$ does not occur.
\end{theorem}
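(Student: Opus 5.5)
The plan is a connectedness argument along the parameter $s$. Define
\begin{equation*}
A \bydef \left\{ s\in (0,\infty) : (\Psi(s),\Phi(s)) \in \mathcal T\cap (\mathbb R\times \mathcal K)\right\}.
\end{equation*}
By hypothesis $(i)$ and the choice of $\varepsilon$ (which gives $\mathcal R^+\subset \mathcal T$), the interval $(0,\varepsilon)$ lies in $A$. I will show that $A$ is both open and closed in $(0,\infty)$. Connectedness then gives $A=(0,\infty)$, that is, $\Phi(s)\in\mathcal K\setminus\{x_0\}$ for all $s>0$ (this is the meaning of ``$\Phi(s)\in \mathcal K\setminus\{0\}$'' for a cone centered at $x_0$). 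This immediately excludes the loop scenario of Theorem \ref{thm:global:CR}, since a loop would force $\Phi(T)=x_0$ with $T>0$, so $T\notin A$. Throughout, I assume, as is implicit in the framework and true for the cone \eqref{cone:def}, that $\mathcal K$ is closed in $X$ and that $-\xi_0\notin\mathcal K$.

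\emph{Openness.} Take $\bar s\in A$. By hypothesis $(iii)$, the point $(\Psi(\bar s),\Phi(\bar s))$ has a neighbourhood $V$ in $\mathcal S$ with $V\subset \mathcal T\cap(\mathbb R\times\mathcal K)$. The map $s\mapsto(\Psi(s),\Phi(s))$ is continuous with values in $\mathfrak R\subset\mathcal S$, so $(\Psi(s),\Phi(s))\in V$ for all $s$ close to $\bar s$. Hence a neighbourhood of $\bar s$ lies in $A$.

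\emph{Closedness.} Let $s_n\in A$ with $s_n\to \bar s>0$. By continuity and closedness of $\mathcal K$, the limit $(\bar\lambda,\bar x)=(\Psi(\bar s),\Phi(\bar s))$ lies in $\mathcal S\cap(\mathbb R\times\mathcal K)$. It remains to exclude $\bar x=x_0$, and I expect this to be the main obstacle. Suppose $\bar x=x_0$ and set
\begin{equation*}
t_n\bydef\norm{\Phi(s_n)-x_0}\to0, \qquad \eta_n\bydef (\Phi(s_n)-x_0)/t_n\in\mathcal K .
\end{equation*}
Subtracting $\mathcal F(\Psi(s_n),x_0)=0$ from $\mathcal F(\Psi(s_n),\Phi(s_n))=0$ and Taylor expanding gives
\begin{equation*}
\partial_x\mathcal F[(\Psi(s_n),x_0)]\,\eta_n=\mathcal O(t_n).
\end{equation*}
Write $\partial_x\mathcal F[(\bar\lambda,x_0)]$ (Fredholm of index zero by (A2)) as an isomorphism plus a compact operator, and use continuity of $\partial_x\mathcal F$ in $\lambda$. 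Then a subsequence of $\eta_n$ converges in $X$ to some $\xi$ with $\norm{\xi}=1$, $\xi\in\mathcal K$ and $\xi\in\ker\partial_x\mathcal F[(\bar\lambda,x_0)]$. Hypothesis $(ii)$ forces $\bar\lambda=\lambda_0$ and $\xi=\alpha\xi_0$ with $\alpha>0$.

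Consequently, for large $n$ the points $(\Psi(s_n),\Phi(s_n))$ are nontrivial zeros inside the neighbourhood $U_0$ of the Crandall--Rabinowitz Theorem \ref{thm:CR}. So they equal $(\Psi(\sigma_n),\Phi(\sigma_n))$ for some $0<|\sigma_n|<\varepsilon$. Since $\Phi(\sigma)-x_0=\sigma\xi_0+\mathcal O(\sigma^2)$ and the normalized direction tends to $+\xi_0$ rather than $-\xi_0\notin\mathcal K$, we get $\sigma_n>0$. For large $n$ one has $s_n\neq\sigma_n$ (indeed $s_n$ stays near $\bar s$ while $\sigma_n\to0$ along the local branch). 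By item $(b)$ of Theorem \ref{thm:global:CR}, we may choose $n$ with $\ker\partial_x\mathcal F=\{0\}$ at that point, perturbing $s_n$ slightly within $A$ using openness if needed. Item $(e)$ then shows the branch is a loop of period $T$ and $\bar s\in T\mathbb N$, and near $\bar s$ from below the curve coincides with $(\Psi(s-\bar s),\Phi(s-\bar s))$ for $s-\bar s<0$ small. If $s_n<\bar s$ along a subsequence, these points satisfy $\Phi-x_0\approx (s_n-\bar s)\xi_0$, a negative multiple of $\xi_0$, contradicting $\eta_n\to\alpha\xi_0$ with $\alpha>0$. If instead $s_n>\bar s$, I would apply the same argument to the first return time $T$ approached from $A\ni s<T$: by openness of $A$ and $(0,\varepsilon)\subset A$, the connected component of $A$ containing $(0,\varepsilon)$ must reach $T$ from below, which yields the same contradiction. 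Hence $\bar x\neq x_0$, so $\bar s\in A$ and $A$ is closed, which completes the argument.
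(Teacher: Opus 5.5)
The paper does not prove this statement; it quotes it from \cite[Theorem~9.2.2]{BJ03}. Your architecture is the standard first-exit argument, and most of it is sound:
\begin{itemize}
\item openness of $A$ follows from (iii);
\item closedness of $\mathcal K$ is needed, and you are right to add it: the paper's definition of a cone omits it, and the cone \eqref{cone:def} satisfies it;
\item the blow-up $\eta_n$, combined with splitting the index-zero Fredholm operator into an isomorphism plus a compact operator, gives a unit vector $\xi\in\ker\partial_x\mathcal F[(\bar\lambda,x_0)]\cap\mathcal K$;
\item hypothesis (ii) then forces $\bar\lambda=\lambda_0$ and $\xi=\alpha\xi_0$ with $\alpha>0$;
\item Crandall--Rabinowitz uniqueness places $(\Psi(s_n),\Phi(s_n))$ on $\mathcal R^+$ with parameters $\sigma_n\in(0,\varepsilon)$;
\item items (b) and (e) of Theorem~\ref{thm:global:CR} force a loop with $\bar s\in T\mathbb N$.
\end{itemize}
Your extra assumption $-\xi_0\notin\mathcal K$ is not needed. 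It follows from (ii), read as intended with $\xi\in\mathcal K\setminus\{0\}$.

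The step that fails as written is the final contradiction. Theorem~\ref{thm:global:CR} only gives two things: $T$-periodicity of $s\mapsto(\Psi(s),\Phi(s))$ on $[0,\infty)$, and $(\Psi(T),\Phi(T))=(\lambda_0,x_0)$. Nothing in it says that, for $s$ slightly below $kT$, the curve runs along the negative half $\{(\Psi(\sigma),\Phi(\sigma)):\sigma<0\}$ of the local branch. So your assertion that $\Phi(s)-x_0\approx(s-\bar s)\xi_0$ just before $\bar s$ is unsupported; proving it would itself require an argument of the type below.

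You do not need this claim. In the case $s_n<\bar s$, perturb to parameters still below $\bar s$ with trivial kernel. Item (e) then says $|s_n-\sigma_n|$ is a positive integer multiple of $T$. Since $s_n-\sigma_n\to\bar s>0$ and $T\mathbb N$ is discrete, $s_n-\sigma_n=\bar s$ for all large $n$. That means $s_n=\bar s+\sigma_n>\bar s$, a contradiction.

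In the case $s_n>\bar s$, no local contradiction is available: this is exactly what periodicity would produce right after $T$. So ``$A$ is closed'' is not the right intermediate claim. Argue directly with $s^*\bydef\sup\{s>0:(0,s)\subset A\}$. If $s^*$ is finite, it is not in $A$ by openness and is approached from below, so the argument above gives a contradiction and $s^*=\infty$. With this repair the proof is complete.
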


\section*{Acknowledgements} 
T. Hmidi has been supported by Tamkeen under the NYU Abu Dhabi Research Institute grant.  F. Rousset is supported by the BOURGEONS project, grant ANR-23-CE40-0014-01 of the French National Research Agency (ANR).

\bibliographystyle{plain} 
\bibliography{singular}

\end{document}